\documentclass[a4paper]{article}
\usepackage{authblk}
\usepackage[a4paper]{geometry}
\usepackage{lmodern}
\usepackage{amsfonts}
\usepackage{amssymb}
\usepackage{mathrsfs}

\usepackage{amsrefs}

\usepackage{interval}

\usepackage{mathtools}

\usepackage[inline]{enumitem}
\setlist[enumerate,1]{label={\textup(\arabic*)}}

\usepackage{amsthm}
\theoremstyle{definition}
\newtheorem{defi}{Definition}[section]
\theoremstyle{plain}
\newtheorem{exam}[defi]{Example}
\newtheorem{lemm}[defi]{Lemma}
\newtheorem{prop}[defi]{Proposition}
\newtheorem{coro}[defi]{Corollary}
\newtheorem{theo}[defi]{Theorem}
\newtheorem{nota}[defi]{Notation}
\theoremstyle{remark}
\newtheorem{rema}[defi]{Remark}

\newcommand{\leadmathskip}{\hspace{1.2em}}

\usepackage{hyperref}
\usepackage{xcolor}
\hypersetup{
  colorlinks,
  linkcolor={brown!50!black},
  citecolor={gray},
  urlcolor={blue},
}


\numberwithin{equation}{section}

\newcommand{\RD}{{\rm (RD)} }

\DeclareMathOperator{\adj}{Ad}
\DeclareMathOperator{\alg}{alg}
\DeclareMathOperator{\aut}{Aut}

\DeclareMathOperator{\id}{id}
\DeclareMathOperator{\image}{Image}

\DeclareMathOperator{\inn}{Inn}
\DeclareMathOperator{\irr}{Irr}
\DeclareMathOperator{\morph}{Mor}
\DeclareMathOperator{\orbit}{Orb}
\DeclareMathOperator{\out}{Out}

\DeclareMathOperator{\pol}{Pol}

\DeclareMathOperator{\psl}{PSL}

\DeclareMathOperator{\speciallinear}{SL}

\DeclareMathOperator{\tr}{Tr}
\DeclareMathOperator{\vect}{Vect}

\DeclarePairedDelimiter{\norm}{\|}{\|}
\DeclarePairedDelimiter{\abs}{\lvert}{\rvert}
\DeclarePairedDelimiterX{\pairing}[2]{\langle}{\rangle}%
{\mskip1mu#1\mskip2mu,\mskip3mu#2\,}
\DeclarePairedDelimiterX{\pair}[2]{\lparen}{\rparen}%
{\mskip1mu#1\mskip2mu,\mskip3mu#2\,}
\DeclarePairedDelimiterX\set[1]\lbrace\rbrace{\def\given{\;\delimsize\vert\;}#1}

\title{Some Examples of Bicrossed Products with the Rapid Decay Property}
\author{Hua Wang}
\date{}
\affil{Institute for Advanced Study in Mathematics, Harbin Institute of Technology \\
  Harbin 15001, China \\
  Email: \href{mailto:huawang@phare.normalesup.org}{huawang@phare.normalesup.org} \\
}

\AtBeginDocument{%
  \def\MR#1{}
}

\begin{document}

\maketitle

\begin{abstract}
  We consider bicrossed products obtained by twisting compact semi-direct
  products by a suitable finite subgroup. We give a practical criterion for the
  rapid decay property and polynomial growth of the dual of such bicrossed
  products under a mild restriction. Using this theory, we construct concrete
  new examples of discrete quantum groups possessing the rapid decay property
  but not growing polynomially.
\end{abstract}

\section{Introduction}
\label{sec:e8a855b65ee329aa}

The rapid decay property, aka. Property \RD, be it for discrete groups
(Jolissaint~\cite{MR943303}) or discrete quantum groups
(Vergnioux~\cite{MR2329000}), is an interesting approximation property due to
its connection with \( K \)-theory, \( L^{2} \)-homology et cetera. From its
introduction by Kac (\cite{MR0229061}) in the 1960s to the culminating work of
Vaes \& Vainerman (\cite{MR1970242}), bicrossed products, as a large class of
non-commutative and non-cocommutative quantum groups, play an important role
in the development of the still rapidly growing of the study of quantum
groups. In \cite{MR4345210}, we give a systematic theoretical study of property
\RD of (the dual of) bicrossed products \( \Gamma \bowtie G \) of a matched pair
\( (\Gamma, G) \) of a compact group with a discrete one. The main results there
being that \( \widehat{\Gamma \bowtie G} \) has \RD if and only if both
\( \Gamma \) and \( \widehat{G} \) have \RD in a compatible manner
(Theorem~\ref{theo:66c73d7c75f29067} below has a more precise formulation). One
of the key component in formulating this result is the notion of matched pair of
length functions, which is closely related to the fusion structure of
irreducible representations of the compact quantum group \( \Gamma \bowtie G \).
In practice, it is quite difficult to verify that the witnessing length
functions for the relevant approximation properties like \RD or polynomial
growth are matched. This is the main reason of the lack of concrete examples in
the theoretical study of \cite{MR4345210}, which is a major drawback of the
study there.

This paper picks up where \cite{MR4345210} left off, namely, the construction of
concrete examples supporting the theory developed there. We work in a particular
class of bicrossed products already considered in \cite{MR3743231}, which we now
briefly describe and refer to \S~\ref{sec:e9d9394edcf4dfb6} for more details. We
start from a nontrivial semidirect product \( G \rtimes \Gamma \) with \( G \) a
compact group and \( \Gamma \) a discrete one, as well as a finite subgroup
\( \Lambda \) of \( \Gamma \) that does not lie entirely in the center of
\( \Gamma \). One can then show that the pair \( (\Gamma, G \rtimes \Lambda) \)
is matched in a natural way (again see \S~\ref{sec:e9d9394edcf4dfb6}), where the
underlying left action
\( \alpha^{\Lambda} : \Gamma \curvearrowright G \rtimes \Lambda \) and right
action \( \beta^{\Lambda} : \Gamma \curvearrowleft G \rtimes \Lambda \) are both
nontrivial, hence yields a non-commutative, non-cocommutative bicrossed product
\( \Gamma \bowtie (G \rtimes \Lambda) \). We restrict our attention to the study
of \RD and the closely related notion of polynomial growth of the dual
\( \widehat{\Gamma \bowtie (G \rtimes \Lambda)} \). This has the advantage that
the underlying representation theory can be divided to two well-understood part:
the first part is the representation theory of the bicrossed products studied in
\cite{MR4345210}, which, modulo the relations determined by the matched pair
\( (\Gamma, G \rtimes \Lambda) \), reduces to the second part, namely the
representation theory of the semidirect product \( G \rtimes \Lambda \), which
is a special case studied in \cite{wang2019representations}. Moreover, the
finiteness of the group \( \Lambda \) provides a way to control the length
functions, which eventually leads to a characterization result that gets rid of
the technically involved notion of matched pair of length functions (see
Definition~\ref{defi:46e581e887d239e1}), this is the contents of
Theorem~\ref{theo:54211248e165a397}.

We can now state the main theoretical results of this paper consisting of the
following two theorems.

\begin{theo}
  \label{theo:69214233f77b0115}
  In the above settings. If there is a \( \Gamma \)-invariant length function
  \( l_{\widehat{G}} \) on \( \widehat{G} \), and a
  \( \beta^{\Lambda} \)-invariant length function \( l_{\Gamma} \) on
  \( \Gamma \), such that both \( \pair*{\widehat{G}}{l_{\widehat{G}}} \) and
  \( \pair*{\Gamma}{l_{\Gamma}} \) have polynomial growth (resp.\ \( (RD) \)),
  then the dual of the bicrossed product, namely
  \( \widehat{\Gamma \bowtie (G \rtimes \Lambda)} \) also has polynomial growth
  (resp.\ \( (RD) \)).
\end{theo}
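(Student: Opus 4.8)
The plan is to deduce the statement from the general characterization of property \( \RD \) (and of polynomial growth) for duals of bicrossed products, namely Theorem~\ref{theo:66c73d7c75f29067}, applied to the matched pair \( (\Gamma, G \rtimes \Lambda) \). Concretely, it suffices to manufacture, out of the two given length functions \( l_{\widehat{G}} \) and \( l_{\Gamma} \), a \emph{matched pair of length functions} on \( (\Gamma, G \rtimes \Lambda) \) in the sense of Definition~\ref{defi:46e581e887d239e1}, in such a way that each of its two components has polynomial growth (resp.\ \( \RD \)); then Theorem~\ref{theo:66c73d7c75f29067} delivers the conclusion for \( \widehat{\Gamma \bowtie (G \rtimes \Lambda)} \). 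The component on \( \Gamma \) will simply be \( l_{\Gamma} \); the work is concentrated in building, and then controlling, the component on \( \widehat{G \rtimes \Lambda} \).

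For the compact factor I would first transport \( l_{\widehat{G}} \) to a length function \( L \) on \( \widehat{G \rtimes \Lambda} \) using the description of \( \irr(G \rtimes \Lambda) \) from \cite{wang2019representations}. Since \( \Lambda \) is finite, the Mackey machine parametrizes the irreducibles of \( G \rtimes \Lambda \) by pairs consisting of a \( \Lambda \)-orbit \( [\pi] \) in \( \widehat{G} \) (for the restricted action of \( \Lambda \subseteq \Gamma \) on \( \widehat{G} \)) together with an irreducible, possibly projective, representation \( \rho \) of the stabilizer \( \Lambda_{\pi} \). I would set \( L \) of such an irreducible to be \( l_{\widehat{G}}(\pi) \) for any representative \( \pi \in [\pi] \): this is well defined precisely because \( l_{\widehat{G}} \) is \( \Gamma \)-invariant, hence \( \Lambda \)-invariant, so it is constant on each orbit. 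Checking that \( L \) is a genuine length function (conjugate-invariance and sub-additivity along the fusion rules) then reduces, via the fusion description of \cite{wang2019representations}, to the corresponding properties of \( l_{\widehat{G}} \), the stabilizer contributions being bounded by a constant depending only on \( \abs{\Lambda} \).

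The transfer of polynomial growth / \( \RD \) from \( (\widehat{G}, l_{\widehat{G}}) \) to \( (\widehat{G \rtimes \Lambda}, L) \) is where finiteness of \( \Lambda \) pays off most directly: each orbit \( [\pi] \) has size at most \( \abs{\Lambda} \), each stabilizer carries only finitely many irreducible (projective) representations, and the dimension of the irreducible attached to \( ([\pi], \rho) \) is bounded by \( \abs{\Lambda} \dim \pi \). Hence the weighted counting functions governing growth and \( \RD \) for \( L \) are comparable, up to multiplicative constants depending only on \( \abs{\Lambda} \), to those for \( l_{\widehat{G}} \), and the two properties pass across unchanged.

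The crux, and the step I expect to be the main obstacle, is verifying that \( (l_{\Gamma}, L) \) actually satisfies the compatibility axioms of a matched pair of length functions (Definition~\ref{defi:46e581e887d239e1}) for the actions \( \alpha^{\Lambda} \) and \( \beta^{\Lambda} \). This requires tracking how the left action \( \alpha^{\Lambda} : \Gamma \curvearrowright \widehat{G \rtimes \Lambda} \) descends to the \( \Gamma \)-action on orbits in \( \widehat{G} \), so that \( \Gamma \)-invariance of \( l_{\widehat{G}} \) yields invariance of \( L \), while simultaneously showing that the right action \( \beta^{\Lambda} \), which twists \( \Gamma \), leaves \( l_{\Gamma} \) essentially unchanged thanks to its assumed \( \beta^{\Lambda} \)-invariance. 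The delicate point is that both actions are genuinely nontrivial and interact through the \( \Lambda \)-coordinate; one must confirm that the only discrepancies introduced are the bounded \( \Lambda \)-contributions already isolated above, which a matched pair of length functions is allowed to absorb. Once this compatibility is established, Theorem~\ref{theo:66c73d7c75f29067} applies and completes the proof.
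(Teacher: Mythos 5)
Your overall strategy is the same as the paper's: transport \( l_{\widehat{G}} \) to a length function on \( \widehat{G \rtimes \Lambda} \) via the Mackey-type (DRP) classification of \( \irr(G \rtimes \Lambda) \), pair it with \( l_{\Gamma} \), and invoke Theorems~\ref{theo:ad139c52be8424d9} and~\ref{theo:66c73d7c75f29067}; the transfer of polynomial growth and \( \RD \) across the finite extension, which you spell out, is indeed also needed. However, the step you yourself flag as ``the main obstacle''---verifying that the pair is matched in the sense of Definition~\ref{defi:46e581e887d239e1}---is precisely the content of the paper's Lemma~\ref{lemm:57fb1888aba4d546}, and your proposal does not contain the two ideas that make it work, so this is a genuine gap rather than a deferred routine check. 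First, matchedness is witnessed by an \emph{affording family} \( \set*{l_{\mathscr{O}}} \) indexed by \emph{all} \( \beta^{\Lambda} \)-orbits, of which your \( L \) is only the component at \( \mathscr{O} = \set*{e_{\Gamma}} \); the axiom requiring \( l_{\Gamma}(\mathscr{O}) \) to equal \( l_{\mathscr{O}}([\varepsilon_{\mathscr{O}}]) \) forces the family to incorporate \( l_{\Gamma} \), and the paper's key formula is the additive one,
\begin{displaymath}
  l_{\mathscr{O}}\left(\Psi_{\gamma}\bigl([(u,V,v)]\bigr)\right)
  := l_{\widehat{G}}([u]) + l_{\Gamma}(\gamma), \qquad \gamma \in \mathscr{O},
\end{displaymath}
upon identifying \( \irr_{\mathscr{O}}(G \rtimes \Lambda) \) with \( \irr(G \rtimes \Lambda_{\gamma}) \). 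You never propose a candidate for \( l_{\mathscr{O}} \) on a general orbit, so the compatibility axioms you intend to verify are not even set up.

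Second, the decisive inequality---if \( \morph_{G \rtimes \Lambda_{\gamma}}\bigl(u^{(3)}_{\gamma,\gamma}\vert_{G \rtimes \Lambda_{\gamma}},\, U_{1} \times_{\gamma} U_{2}\bigr) \neq 0 \) then \( l_{\mathscr{O}_{3}}([U_{3}]) \leq l_{\mathscr{O}_{1}}([U_{1}]) + l_{\mathscr{O}_{2}}([U_{2}]) \)---is exactly where full \( \Gamma \)-invariance of \( l_{\widehat{G}} \) is required, whereas everything in your construction of \( L \) uses only \( \Lambda \)-invariance. The twisted tensor product \( U_{1} \times_{\gamma} U_{2} \) evaluates \( u^{(1)} \) at \( \alpha_{r_{2}}(g) \) with \( r_{2} \) ranging over the orbit \( \mathscr{O}_{2} \subseteq \Gamma \), so a character/matrix-coefficient argument shows that non-vanishing of the intertwiner space forces \( r \cdot [u_{3}] \subseteq (r_{1} \cdot [u_{1}]) \otimes (r_{2} \cdot [u_{2}]) \) for some \( r \in \Lambda \) and \( r_{1}, r_{2} \in \Gamma \) (not merely in \( \Lambda \)); \( \Gamma \)-invariance then yields \( l_{\widehat{G}}([u_{3}]) \leq l_{\widehat{G}}([u_{1}]) + l_{\widehat{G}}([u_{2}]) \), while \( \mathscr{O}_{3} \subseteq \mathscr{O}_{1}\mathscr{O}_{2} \) together with \( \beta^{\Lambda} \)-invariance yields \( l_{\Gamma}(\gamma_{3}) \leq l_{\Gamma}(\gamma_{1}) + l_{\Gamma}(\gamma_{2}) \), and the two add up. Finally, your fallback idea that discrepancies ``bounded by \( \Lambda \)-contributions'' can be ``absorbed'' by a matched pair is not available: the conditions of Definition~\ref{defi:46e581e887d239e1} are exact equalities and inequalities with no slack for additive errors, and the construction succeeds because the additive formula above satisfies them exactly, not approximately.
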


\begin{theo}
  \label{theo:54211248e165a397}
  Let \( \widetilde{\tau} : \Gamma \to \out(G) \) be the composition of
  \( \tau : \Gamma \to \aut(G) \) with the canonical projection
  \( \aut(G) \to \out(G) \). If \( \image(\widetilde{\tau}) \) is finite, then
  the following are equivalent:
  \begin{enumerate}
  \item \label{item:c24cd688ec1da3ff}
    \( \widehat{\Gamma \bowtie (G \rtimes \Lambda)} \) has polynomial growth
    (resp.\ \( (RD) \));
  \item \label{item:ac508fcd0e5a1c73} both \( \Gamma \) and \( \widehat{G} \)
    have polynomial growth (resp.\ \( (RD) \)).
  \end{enumerate}
\end{theo}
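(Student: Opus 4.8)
The plan is to establish the two implications separately. I will deduce \ref{item:c24cd688ec1da3ff} from \ref{item:ac508fcd0e5a1c73} by invoking Theorem~\ref{theo:69214233f77b0115}, and the converse from the general characterization of \cite{MR4345210} recalled in Theorem~\ref{theo:66c73d7c75f29067}. The sole purpose of the finiteness of \( \image(\widetilde{\tau}) \) is to upgrade arbitrary witnessing length functions into the \emph{invariant} ones demanded by Theorem~\ref{theo:69214233f77b0115}, by averaging over a finite symmetry group.

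For \ref{item:ac508fcd0e5a1c73} \( \Rightarrow \) \ref{item:c24cd688ec1da3ff}, suppose \( \Gamma \) and \( \widehat{G} \) both have polynomial growth (resp.\ \( \RD \)) and fix length functions \( \ell^{0} \) on \( \Gamma \) and \( l^{0} \) on \( \widehat{G} \) witnessing this. The action of \( \Gamma \) on \( \widehat{G} \) induced by \( \tau \) factors through \( \out(G) \), because inner automorphisms of \( G \) fix every class in \( \irr(G) \), and hence through the finite group \( \image(\widetilde{\tau}) \); accordingly I set
\[
  l_{\widehat{G}}(\pi) = \max_{\sigma \in \image(\widetilde{\tau})} l^{0}(\sigma \cdot \pi), \qquad \pi \in \irr(G).
\]
Since each \( \sigma \) acts by an automorphism of \( G \), it permutes \( \irr(G) \) compatibly with tensor products, contragredients and the trivial representation, so \( l_{\widehat{G}} \) is again a length function, and it is \( \Gamma \)-invariant by construction. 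On the discrete side, by the description of the matched pair in \S~\ref{sec:e9d9394edcf4dfb6} the right action \( \beta^{\Lambda} \) factors through the finite group \( \Lambda \) (the \( G \)-part acting trivially on the discrete set \( \Gamma \)), where it is the conjugation action, so the analogous prescription \( l_{\Gamma}(\gamma) = \max_{\lambda \in \Lambda} \ell^{0}(\lambda \gamma \lambda^{-1}) \) produces a \( \beta^{\Lambda} \)-invariant length function. Because the identity belongs to each of these finite groups we have \( l_{\widehat{G}} \geq l^{0} \) and \( l_{\Gamma} \geq \ell^{0} \) pointwise, so the balls of the symmetrized functions are contained in those of the originals; as both polynomial growth (a bound on the summed squared quantum dimensions over a ball) and \( \RD \) (a polynomial norm bound for elements supported on a ball) transfer verbatim to any larger length function, \( \pair*{\widehat{G}}{l_{\widehat{G}}} \) and \( \pair*{\Gamma}{l_{\Gamma}} \) still have polynomial growth (resp.\ \( \RD \)). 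Theorem~\ref{theo:69214233f77b0115} then gives the conclusion for \( \widehat{\Gamma \bowtie (G \rtimes \Lambda)} \).

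For \ref{item:c24cd688ec1da3ff} \( \Rightarrow \) \ref{item:ac508fcd0e5a1c73}, which needs no finiteness assumption, I appeal to the ``only if'' part of Theorem~\ref{theo:66c73d7c75f29067}: if \( \widehat{\Gamma \bowtie (G \rtimes \Lambda)} \) has polynomial growth (resp.\ \( \RD \)), then there is a matched pair of length functions on \( \Gamma \) and on \( \widehat{G \rtimes \Lambda} \) realizing this property, so in particular each of \( \Gamma \) and \( \widehat{G \rtimes \Lambda} \) has polynomial growth (resp.\ \( \RD \)). It remains to replace \( \widehat{G \rtimes \Lambda} \) by \( \widehat{G} \): as \( \Lambda \) is finite, \( G \) is a finite-index open normal subgroup of \( G \rtimes \Lambda \), and Clifford theory matches \( \irr(G \rtimes \Lambda) \) with \( \irr(G) \) with multiplicities bounded by \( \abs{\Lambda} \), so the two duals have polynomial growth (resp.\ \( \RD \)) simultaneously; this is the representation-theoretic reduction underlying \S~\ref{sec:e9d9394edcf4dfb6}. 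Hence \( \widehat{G} \) inherits the property.

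The main obstacle I expect lies in \ref{item:ac508fcd0e5a1c73} \( \Rightarrow \) \ref{item:c24cd688ec1da3ff}, specifically in confirming that both symmetry actions reduce to \emph{finite} groups acting by automorphisms: the \( \widehat{G} \) side rests precisely on the hypothesis that \( \image(\widetilde{\tau}) \) is finite, while the \( \Gamma \) side relies on the structural fact that \( \beta^{\Lambda} \) is the conjugation action of the finite group \( \Lambda \). Granting these, the checks that the symmetrized functions are length functions and retain the growth and \( \RD \) estimates are routine, and the theorem follows from Theorem~\ref{theo:69214233f77b0115} together with \cite{MR4345210}.
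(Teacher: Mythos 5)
Your proposal is correct in substance, and for the implication \ref{item:ac508fcd0e5a1c73} \( \Rightarrow \) \ref{item:c24cd688ec1da3ff} it follows the same strategy as the paper: observe that the action \( \Gamma \curvearrowright \irr(G) \) factors through the finite group \( \image(\widetilde{\tau}) \) (and that \( \beta^{\Lambda} \) is just conjugation by \( \Lambda \)), symmetrize the witnessing length functions into invariant ones, and feed them into Theorem~\ref{theo:69214233f77b0115}. The one genuine difference is the symmetrization device: you take a pointwise \emph{maximum} over the finite group, whereas the paper \emph{averages} (Proposition~\ref{prop:461c0747dedf7cfc} and Corollary~\ref{coro:f15d5d55ec591c2a}). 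Your choice is arguably cleaner: since the maximum dominates the original length function, every ball of the symmetrized function is contained in the corresponding ball of the original, so the ball-form growth and \( \RD \) estimates transfer verbatim; the averaged function only satisfies a containment of its balls in a \emph{union} of translated balls, which is why the paper needs the decomposition-plus-Cauchy--Schwarz argument and pays a factor \( \sqrt{\abs*{\Theta}} \). (Both routes quietly use that the ball formulation, i.e.\ the projections \( Q_{l,k} \), is equivalent to the shell formulation of \S~\ref{sec:00df8c84fb2742bf}; the paper does the same inside Proposition~\ref{prop:461c0747dedf7cfc}.)

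For the converse \ref{item:c24cd688ec1da3ff} \( \Rightarrow \) \ref{item:ac508fcd0e5a1c73} you are in fact more explicit than the paper, whose proof cites only the three results that yield the forward direction and leaves this direction implicit. Two repairs are needed in your paragraph. First, for polynomial growth the relevant characterization is Theorem~\ref{theo:ad139c52be8424d9}; Theorem~\ref{theo:66c73d7c75f29067} is the \( \RD \) statement. Second, and more seriously, the step you assert without proof --- that \( \widehat{G \rtimes \Lambda} \) and \( \widehat{G} \) have polynomial growth (resp.\ \( \RD \)) simultaneously --- is true but is the one place requiring an actual argument, and for \( \RD \) it is not a pure multiplicity statement, so ``Clifford theory'' alone does not settle it. The cleanest completion with the paper's own toolkit is: both are duals of \emph{classical} compact groups, hence co-amenable, so by the result of Vergnioux recalled in \S~\ref{sec:00df8c84fb2742bf}, \( \RD \) and polynomial growth are equivalent for each of them; and polynomial growth does pass from \( \widehat{G \rtimes \Lambda} \) to \( \widehat{G} \), since every \( x \in \irr(G) \) embeds in \( y\vert_{G} \) for some \( y \in \irr(G \rtimes \Lambda) \) by Frobenius reciprocity, so that \( l'(x) := \min\set*{l(y) \given x \subseteq y\vert_{G}} \) is a length function on \( \irr(G) \) (if \( z \subseteq x \otimes y \) then \( z \subseteq (y_{x} \otimes y_{y})\vert_{G} \) for minimizers \( y_{x}, y_{y} \)) satisfying
\begin{displaymath}
  \sum_{x \,:\, l'(x) < k+1} {(\dim x)}^{2}
  \leq \sum_{y \,:\, l(y) < k+1} {(\dim y)}^{2}.
\end{displaymath}
Also, this reduction belongs to the representation theory of \S~\ref{sec:cc23f784f4d89121}, not to \S~\ref{sec:e9d9394edcf4dfb6} as you state. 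With these adjustments your proposal is a complete proof, running essentially along the paper's lines for the main direction and filling in a step the paper glosses over for the converse.
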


Theorem~\ref{theo:69214233f77b0115} provides a sufficient condition for
\( \widehat{\Gamma \bowtie (G \rtimes \Lambda)} \) to have \RD (resp.\
polynomial growth). Albeit still non-trivial, this condition is much easier to
check in some concrete cases than the condition of matched pair of length
functions used in Theorem~\ref{theo:ad139c52be8424d9} and
Theorem~\ref{theo:66c73d7c75f29067}, which we shall exploit in
\S~\ref{sec:bd435170e9c7be48}. Theorem~\ref{theo:54211248e165a397} is of course
a much stronger result, provided the assumption there is satisfied, as it
completely gets rid of problem of compatibility of the relevant length
functions. This leads to the concrete examples of
\( \widehat{\Gamma \bowtie (G \rtimes \Lambda)} \) having \RD but not polynomial
growth (\S~\ref{sec:c9560fd86a3f44db}). However, it has the limitation of not
covering many other interesting examples, as will be demonstrated again in
\S~\ref{sec:bd435170e9c7be48}. It is in the opinion of the author that a more
unified approach is desired that relaxes the assumption of
Theorem~\ref{theo:54211248e165a397} while still includes the examples in
\S~\ref{sec:bd435170e9c7be48}.

This paper is organized as follows. In the long preliminary
\S~\ref{sec:46a24bbbeda06e15}, we provide all the needed background on the
representation theory that can be rather involved. Besides the purpose of making
the paper reasonably self-contained, it also serves to fix notations for later
use. In \S~\ref{sec:e9d9394edcf4dfb6}, we briefly give a conceptual and still
elementary explanation of how one can obtain non-trivial matched pair by
twisting semidirect products by a finite subgroup, followed by
\S~\ref{sec:18f8efc7904f301f} which fixes even more notations that are required
to deal with the arising subtleties. After the stage is set, we prove the main
theorems in \S~\ref{sec:934056265b2fce2f} and
\S~\ref{sec:ef1c8a06826abe0a}. Then using the tools developed so far, we
constructed the desired examples using Theorem~\ref{theo:54211248e165a397} in
\S~\ref{sec:c9560fd86a3f44db}, and another class of desired examples are
constructed in \S~\ref{sec:bd435170e9c7be48}, showing the limitation of the
procedure in \S~\ref{sec:c9560fd86a3f44db}.

We finish this introduction by making some conventions.

\subsection{Conventions and notations}
\label{sec:b02a86fd1efc0e1c}

Regarding the terminologies and notations of compact quantum groups, our choices
are mainly consistent with the works of Neshveyev \& Tuset~\cite{MR3204665}
and Woronowicz~\cite{MR1616348}.

All representations and projective representations of (quantum) groups are
assumed to be unitary and finite dimensional unless otherwise stated. We view
discrete quantum groups as the duals of compact ones.

When \( \mathbb{G} \) is a compact (quantum) group, the set of equivalence
classes of irreducible representations of \( \mathbb{G} \) is denoted by
\( \operatorname{Irr}(\mathbb{G}) \), and the trivial representation of
\( \mathbb{G} \) is denoted by \( \varepsilon_{\mathbb{G}} \), or simply
\( \varepsilon \) when \( \mathbb{G} \) is clear from context. If \( u \) is a
representation of \( \mathbb{G} \), then the equivalence class of \( u \) is
denoted by \( [u] \). The dual of \( \mathbb{G} \), as a discrete quantum group,
is denoted by \( \widehat{\mathbb{G}} \). The tensor product of two
representations \( u \) and \( v \) of \( \mathbb{G} \) is denoted by
\( u \times v \), and the tensor product of classes
\( x, y \in \operatorname{Irr}(\mathbb{G}) \) by \( x \otimes y \). The space of
operators that intertwines \( u \) and \( v \) is denoted by
\( \operatorname{Mor}_{\mathbb{G}}(u, v) \).

When \( H \), \( K \) are Hilbert spaces, the notation \( B(H, K) \) means the
Banach space of all bounded linear operators from \( H \) to \( K \). The space
\( B(H, H) \) is denoted by \( B(H) \).

For a topological group \( G \), we use \( \inn(G) \) to denote the normal
subgroup of \( \aut(G) \) consisting of inner automorphisms, where \( \aut(G) \)
is the group of topological automorphisms of \( G \). The outer automorphism
group \( \out(G) \) is of course the quotient group \( \aut(G) / \inn(G) \).

\subsection{Acknowledgement}
\label{sec:07917b7d999ef37e}

This work is supported by the ANR project ANCG (No. ANR-19-CE40-0002) in France
during its main stage of preparation, as well as the National Science Center
grant (NCN) 2020/39/I/ST1/01566 in Poland in its final stage. The author would
also like to express his gratitude to Professor Pierre Fima for several useful
discussions, and to Professor Adam Skalski for helpful suggestions.

\section{Preliminaries}
\label{sec:46a24bbbeda06e15}

As preliminaries, we briefly describe the relevant results of
(\cite{MR4345210}) and (\cite{wang2019representations}), which form
the theoretical framework of this paper. To make the presentation more
self-contained, we also briefly recall the bicrossed product construction as
presented in \cite{MR3743231}, and property \RD in the discrete quantum group
setting (\cite{MR2329000}, see also \cite{MR3448333}).

\subsection{Rapid decay and polynomial growth for discrete quantum groups of Kac
  type}
\label{sec:00df8c84fb2742bf}

As the bicrossed products of matched pair of groups considered here are always
of Kac type, we only recall the relevant formulations in this slightly
simplified case.

Let \( \mathbb{H} \) be a compact quantum group of Kac type and
\( \widehat{\mathbb{H}} \) its dual (as a discrete quantum group of Kac type),
\( \ell^{\infty}(\widehat{\mathbb{H}}) \) be the \( \ell^{\infty} \)-direct sum
\( \bigoplus_{x \in \operatorname{Irr}(\mathbb{H})}^{\ell^{\infty}}B(H_{x}) \),
and \( c_{c}(\widehat{\mathbb{H}}) \) the ideal of the algebraic direct sum,
whose \( C^{\ast} \)-completion is denoted by \( c_{0}(\widehat{\mathbb{H}})
\). A length function on \( \widehat{\mathbb{H}} \) is a map
\( l : \operatorname{Irr}(\mathbb{H}) \to \mathbb{R}_{\geq 0} \) such that
\begin{enumerate*}[label=\textup{(\roman*)}]
\item \( l([\varepsilon_{\mathbb{H}}]) = 0 \);
\item \( l(x) = l(\overline{x}) \) for all
  \( x \in \operatorname{Irr}(\mathbb{H}) \);
\item \( l(z) \leq l(x) + l(y) \) if \( z \subseteq x \otimes y \).
\end{enumerate*}
In the following, \( p_{x} \) denotes the central projection of
\( c_{0}(\widehat{\mathbb{H}}) \) that corresponds to the block \( B(H_{x}) \).
Take \( a \in c_{c}(\widehat{\mathbb{H}}) \), the \textbf{Fourier transform}
\( \mathcal{F}_{\mathbb{H}}(a) \) of \( a \) is defined by
\begin{displaymath}
  \mathcal{F}_{\mathbb{H}}(a)
  = \sum_{x \in \operatorname{Irr}(\mathbb{H})} (\dim x)
  \left[(\operatorname{Tr}_{H_{x}} \otimes \id)(u^{x}(a p_{x} \otimes 1))\right]
  \in \operatorname{Pol}(\mathbb{H})
\end{displaymath}
where \( u^{x} \) is a unitary representation on \( H_{x} \), chosen (and fixed)
in the class \( x \). The Sobolev-\( 0 \)-norm \( \norm*{a}_{\mathbb{H}, 0} \)
is determined by
\begin{displaymath}
  \norm*{a}_{\mathbb{H}, 0}^{2} = \sum_{x \in \operatorname{Irr}(\mathbb{H})} (\dim x)
  \operatorname{Tr}_{H_{x}}(a^{\ast}ap_{x}).
\end{displaymath}

Given a length function \( l \) on \( \widehat{\mathbb{H}} \), consider the
element \( L = \sum_{x \in \operatorname{Irr}(\mathbb{H})} l(x) p_{x} \) that is
affiliated (in the sense of Woronowicz \cite{MR1096123}) to
\( c_{0}(\widehat{\mathbb{H}}) \). Denote the spectral projection of \( L \)
corresponding to the interval \( \interval[open right]{n}{n+1} \) by
\( q_{n} \). The pair \( \pair*{\widehat{\mathbb{H}}}{l} \) is said to have
\begin{itemize}
\item \emph{the rapid decay property} or property \RD, if there exists a
  polynomial \( P \in \mathbb{R}[X] \), such that for all \( k \in \mathbb{N} \)
  and \( a \in q_{k} c_{c}(\widehat{\mathbb{H}}) \), we have
  \( \norm*{a}_{C(\mathbb{H})} \leq P(k) \norm*{a}_{\mathbb{H}, 0} \), where
  \( C(\mathbb{H}) \);
\item \emph{polynomial growth}, if there exists a polynomial
  \( P \in \mathbb{R}[X] \), such that for all \( k \in \mathbb{N} \),
  \begin{displaymath}
    \sum_{x \in \operatorname{Irr}(\mathbb{H}), \, k \leq l(x) < k+1} {(\dim x)}^{2} \leq P(k).
  \end{displaymath}
\end{itemize}

We say that \( \widehat{\mathbb{H}} \) has \RD (resp.\ polynomial growth), if
there exists a length function \( l \) on \( \widehat{\mathbb{H}} \) such that
the pair \( \pair*{\widehat{\mathbb{H}}}{l} \) has \RD (resp.\ polynomial
growth).

From the work \cite{MR2329000}, it is known that
\( \pair*{\widehat{\mathbb{H}}}{l} \) having polynomial growth implies it having
\RD, and the converse holds if \( \mathbb{H} \) is co-amenable; it is also known
that the duals of compact connected real Lie groups have polynomial growth.

We remark also that when \( \widehat{\mathbb{H}} \) is a classical discrete
group, the above definitions reduce to the classical case considered by
Jolissaint \cite{MR943303}.

\subsection{Bicrossed products and \RD}
\label{sec:7dac8ea5d84fa4d0}

\subsubsection{Construction of the bicrossed product}
\label{sec:11274d9921840817}

We briefly describe the bicrossed product construction associated to a matched
pair of classical groups, which is the main examples to be considered here, and
refer to \cite{MR3743231} or \cite{phdthesis}*{Chapter I} for more background and
the missing details, and we mention that this construction works in the much
more general context of matched pair of locally compact quantum groups (see
\cite{MR1970242}).

A matched pair of groups \( \pair*{\Gamma}{G} \) consists of a discrete group
\( \Gamma \), a compact (compact means quasi-compact and Hausdorff in this
paper) group \( G \), together with two actions \( \alpha \) and \( \beta \),
where \( \alpha : \Gamma \curvearrowright G \) is a right action of the group
\( \Gamma \) on the underlying set of \( G \), and
\( \beta : \Gamma \curvearrowleft G \) a right action of the group \( G \) on
the underlying set of \( \Gamma \), such that
\begin{subequations}
  \begin{equation}
    \label{eq:6b5985ad92508180}
     \gamma \in \Gamma, g, h \in G, \implies
    \alpha_{\gamma}(g h) = \alpha_{\gamma}(g) \alpha_{\beta_{g}(\gamma)}(h),
  \end{equation}
  \begin{equation}
    \label{eq:a6ca9ad979bc2175}
    r, s \in \Gamma, g \in G, \implies
    \beta_{g}(rs) = \beta_{\alpha_{s}(g)}(r) \beta_{g}(s),
  \end{equation}
  \begin{equation}
    \label{eq:f5dd567fe0dac636}
    \gamma \in \Gamma, g \in G, \implies
    \alpha_{\gamma}(e_{G}) = e_{G}, \;
    \beta_{g}(e_{\Gamma}) = e_{\Gamma},
  \end{equation}
\end{subequations}
where \( e_{\Gamma} \) (resp.\ \( e_{G} \)) denotes the identity element in the
group \( \Gamma \) (resp.\ \( G \)). It is well-known that \( (\Gamma, G) \) is
a matched pair of groups if and only if \( \Gamma \) and \( G \) can
simultaneously be identified with (topological) subgroups of a locally compact
group \( H \) in such a way that \( H = \Gamma G \),
\( \Gamma \cap G = \set*{e_{H}} \). In this identification, the underlying
actions \( \alpha \), \( \beta \) are characterized by the relations
\( \gamma g = \alpha_{\gamma}(g) \beta_{g}(\gamma) \). One also has that the
actions \( \alpha \), \( \beta \) are automatically continuous, and \( \alpha \) preserves the
Haar measure of \( G \).

We now introduce some notations. Given a matched pair of groups
\( \pair*{\Gamma}{G} \) with underlying actions \( \alpha \) and \( \beta \).
To simplify notations, we often denote the right action \( \beta \) by a dot,
thus writing \( \gamma \cdot g \) instead of \( \beta_{g}(\gamma) \). In this
spirit, the orbit of \( \gamma \in \Gamma \) under \( \beta \) is written as
\( \gamma \cdot G \). By the continuity of the action \( \beta \), we see that
the set \( G_{r,s}:= \set*{g \in G \given r \cdot g = s} \) is clopen, thus its
characteristic function \( v_{r,s} \) is an element of \( C(G) \). It is obvious
that \( G_{r,s} \ne \emptyset \) if and only if \( r,s \) belong to the same
orbit, and \( G_{\gamma}:= G_{\gamma, \gamma} \) is the isotropy subgroup of
\( G \) fixing \( \gamma \in \Gamma \). As \( G_{\gamma} \) is of finite index
in \( G \) (since \( G \) compact and \( G_{\gamma} \) open), each
\( \beta \)-orbit is finite, and we denote the set of all \( \beta \)-orbits by
\( \operatorname{Orb}_{\beta} \).

The action \( \alpha : \Gamma \curvearrowright G \) induces a
\( C^{\ast} \)-dynamical system \( (C(G), \Gamma, \widetilde{\alpha}) \), where
\( C(G) \) is the algebra of continuous functions on \( G \), and
\( \widetilde{\alpha} : \Gamma \to \operatorname{Aut}(C(G)) \) is the group
homomorphism
\( \gamma \mapsto \left\{\alpha_{\gamma^{-1}}^{\ast} : \varphi \mapsto \varphi
  \circ \alpha_{\gamma^{-1}}\right\} \). On the reduced crossed product
\( C(G) \rtimes \Gamma \), there exists a unique compact quantum group structure
whose comultiplication \( \Delta \) satisfies the following conditions:
\begin{itemize}
\item \( \Delta \) restricts to the comultiplication on \( C(G) \) that is induced by
  the multiplication on \( G \);
\item
  \( \Delta(u_{\gamma}) = \sum_{\mu \in \gamma \cdot G}u_{\gamma} v_{\gamma,
    \mu} \otimes u_{\mu} \), where we use \( u_{r} \) to denote the copy of
  \( r \in \Gamma \) in the crossed product \( C(G) \rtimes \Gamma \).
\end{itemize}

We denote the compact quantum group \( (C(G) \rtimes \Gamma, \Delta) \) by
\( \Gamma \bowtie_{\alpha, \beta} G \), or simply \( \Gamma \bowtie G \),
dropping the actions \( \alpha \) and \( \beta \) when the context makes them
clear, and call this compact quantum group the \textbf{bicrossed product} of the
matched pair of groups \( \pair*{\Gamma}{G} \). The quantum group
\( \Gamma \bowtie G \) is always of Kac type, so \S~\ref{sec:00df8c84fb2742bf}
applies to \( \widehat{\Gamma \bowtie G} \).

Throughout the rest of \S~\ref{sec:7dac8ea5d84fa4d0}, we fix a matched pair of
groups \( \pair*{\Gamma}{G} \), and the underlying left action
\( \alpha : \Gamma \curvearrowright G \), as well as the right action
\( \beta : \Gamma \curvearrowleft G \).

\subsubsection{Representation theory of the bicrossed product}
\label{sec:d0a3089327fd8687}

The representation theory of \( \Gamma \bowtie G \) that we will need includes a
classification of irreducible representations, the description of the conjugate
representation (which is the contragredient representation since
\( \Gamma \bowtie G \) is of Kac type) in terms of this classification, and most
importantly, the fusion rules. The results summarized here were first obtained in
\cite{MR4345210}, with the exception of the simplified version of the fusion
rules, Theorem~\ref{theo:c8f451b8443e316a}, which is given in
\cite{phdthesis}*{Theorem I.4.9}. We refer the reader to \cite{MR4345210}*{\S{}3}
and \cite{phdthesis}*{\S{}I.2 \& \S{}I.4} for complete proofs of the results
below.  Motivational considerations of the classification of irreducible
representations can be found in \cite{phdthesis}*{\S{}I.3}.

We will formulate our results using the notion of
\( \mathscr{O} \)-representations, which we now briefly describe.  Take an orbit
\( \mathscr{O} \in \operatorname{Orb}_{\beta} \). By an
\textbf{\( \mathscr{O} \)-representation of \( G \)}, we mean a unitary operator
\( U \) in \( B(\ell^{2}(\mathscr{O})) \otimes B(H) \otimes C(G) \), where
\( H \) is some finite dimensional Hilbert space, such that if we write
\begin{displaymath}
  U = \sum_{r,s \in \mathscr{O}} e_{r,s} \otimes u_{r,s}
\end{displaymath}
with \( {(e_{r,s})} \) the canonical matrix units of
\( B(\ell^{2}(\mathscr{O})) \) and \( u_{r,s} \in B(H) \otimes C(G) \), then
(recall that \( v_{r,s} \) is the characteristic function of \( G_{r,s} \))
\begin{displaymath}
  \forall r,s \in \mathscr{O}, \;
  u^{\ast}_{r,s}u_{r,s} = u_{r,s}u^{\ast}_{r,s} = \id_{H} \otimes v_{r,s} \in B(H) \otimes C(G).
\end{displaymath}
Viewing each \( u_{r,s} \) as a map from \( G \) into \( B(H) \), then the
restriction \( u_{r,r} \vert_{G_{r}} \) is a unitary representation of
\( G_{r} \) on \( H \). We say that \( U \) is \( \mathscr{O} \)-irreducible if
\( u_{r,r}\vert_{G_{r}} \) is irreducible for some \( r \in \mathscr{O} \),
which is equivalent to \( u_{r,r}\vert_{G_{r}} \) being irreducible for all
\( r \in \mathscr{O} \).  If
\( W = \sum_{r,s \in \mathscr{O}} e_{r,s} \otimes w_{r,s} \) is another
\( \mathscr{O} \)-representation of \( G \), say on
\( \ell^{2}(\mathscr{O}) \otimes K \), we say that \( U \) and \( W \) are
\( \mathscr{O} \)-equivalent if there exists a unitary operator
\( T \in B(H, K) \) such that \( T \) intertwines \( u_{r,r} \vert_{G_{r}} \)
and \( v_{r,r}\vert_{G_{r}} \) for some \( r \in \mathscr{O} \), which is also
equivalent to \( T \) intertwines \( u_{r,r} \vert_{G_{r}} \) and
\( v_{r,r}\vert_{G_{r}} \) for all \( r \in \mathscr{O} \).

The \( \mathscr{O} \)-representations can be used to produce representations of
the bicrossed product \( \Gamma \bowtie G \). Namely, if \( U \) is an
\( \mathscr{O} \)-representation above, then the unitary operator
\begin{equation}
  \label{eq:ddd54c506170289c}
  \mathfrak{R}_{\mathscr{O}}(U)
  := \left(\sum_{\gamma \in \mathscr{O}}e_{\gamma, \gamma} \otimes
    \id_{\mathscr{H}} \otimes
    u_{\gamma} \right) U
\end{equation}
is a unitary representation of \( \Gamma \bowtie G \) on
\( \ell^{2}(\mathscr{O}) \otimes H \). For
\( \mathscr{O} \in \operatorname{Orb}_{\beta} \), denote the set of
\( \mathscr{O} \)-equivalence classes of \( \mathscr{O} \)-irreducible
representations by \( \operatorname{Irr}_{\mathscr{O}}(G) \).
\begin{theo}[Classification of irreducible representations, \cite{phdthesis}*{p26, Theorem~I.4.9~(c)}]
  \label{theo:72e7adf90e3ce2dc}
  The map
  \begin{equation}
    \label{eq:b84df145df7f4328}
    \begin{split}
      \mathfrak{R} : \coprod_{\mathscr{O} \in \operatorname{Orb}_{\beta}}
      \operatorname{Irr}_{\mathscr{O}}(G) & \to \operatorname{Irr}(\Gamma \bowtie G) \\
      [U] \in \operatorname{Irr}_{\mathscr{O}}(G) & \mapsto
      [\mathfrak{R}_{\mathscr{O}}(U)]
    \end{split}
  \end{equation}
  is a well-defined bijection.
\end{theo}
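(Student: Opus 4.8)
The plan is to deduce the entire statement from a single explicit computation of the morphism spaces between the representations $\mathfrak{R}_{\mathscr{O}}(U)$, and then to close the argument by a Peter--Weyl density count. Concretely, I would first establish, for orbits $\mathscr{O}, \mathscr{O}' \in \operatorname{Orb}_{\beta}$ with an $\mathscr{O}$-representation $U$ and an $\mathscr{O}'$-representation $U'$, a natural isomorphism
\[
  \operatorname{Mor}_{\Gamma \bowtie G}\bigl(\mathfrak{R}_{\mathscr{O}}(U),\, \mathfrak{R}_{\mathscr{O}'}(U')\bigr)
  \cong
  \begin{cases}
    \operatorname{Mor}_{G_{r}}\bigl(u_{r,r}|_{G_{r}},\, u'_{r,r}|_{G_{r}}\bigr) & \text{if } \mathscr{O} = \mathscr{O}',\\
    0 & \text{if } \mathscr{O} \neq \mathscr{O}',
  \end{cases}
\]
for any fixed $r \in \mathscr{O}$. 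Granting this formula, well-definedness, injectivity, and the assertion that $\mathfrak{R}$ lands in $\operatorname{Irr}(\Gamma \bowtie G)$ all follow at once: taking $U = U'$ and invoking Schur's lemma on the isotropy group $G_r$ shows that $\mathfrak{R}_{\mathscr{O}}(U)$ is irreducible exactly when $u_{r,r}|_{G_r}$ is, i.e.\ exactly when $U$ is $\mathscr{O}$-irreducible; and for two $\mathscr{O}$-irreducibles the morphism space contains a unitary precisely when $u_{r,r}|_{G_r} \cong u'_{r,r}|_{G_r}$, that is, precisely when $[U] = [U']$ in $\operatorname{Irr}_{\mathscr{O}}(G)$. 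This simultaneously yields independence of the chosen representative and injectivity, while the vanishing case shows that distinct orbits give inequivalent representations.

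To prove the formula I would write an intertwiner in block form $T = \sum_{r \in \mathscr{O},\, s \in \mathscr{O}'} e_{s,r} \otimes T_{s,r}$ with $T_{s,r} \in B(H, H')$, impose the intertwining relation $(T \otimes 1)\mathfrak{R}_{\mathscr{O}}(U) = \mathfrak{R}_{\mathscr{O}'}(U')(T \otimes 1)$ inside $B(\cdots) \otimes (\Gamma \rtimes C(G))$, and expand using the explicit form of $\mathfrak{R}$ together with the crossed-product relation $u_{\gamma} \varphi = (\varphi \circ \alpha_{\gamma^{-1}})\, u_{\gamma}$. Matching the $\Gamma$-homogeneous components (the coefficients of each $u_{\gamma}$), and using that $v_{r,s}$ is supported on $G_{r,s}$, forces $T$ to vanish unless $\mathscr{O} = \mathscr{O}'$; in that case the surviving blocks $T_{s,r}$ are rigidly linked across the orbit, since $G$ acts transitively on $\mathscr{O}$ under $\beta$ so that any $r, s \in \mathscr{O}$ are joined by some $g$ with $G_{r,s} \neq \emptyset$, and this coherence reduces the whole system to the single datum $T_{r,r}$ intertwining $u_{r,r}|_{G_r}$ and $u'_{r,r}|_{G_r}$. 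I expect the bookkeeping in this step---tracking the characteristic functions $v_{r,s}$, the $\alpha$-twist, and the transitive gluing of the blocks---to be the main obstacle, since it is the only place where the full matched-pair structure is genuinely used.

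Finally, for surjectivity I would argue by density rather than by decomposing an abstract irreducible. The matrix coefficients of $\mathfrak{R}_{\mathscr{O}}(U)$ have the form $u_{r}\,\bigl(\text{a coefficient of } U\bigr)$ with $r \in \mathscr{O}$, hence lie in the homogeneous pieces $u_{r}\, C(G)$ of the crossed product. Fixing an orbit $\mathscr{O}$ with base point $r_{0}$, the sets $G_{r_{0},s}$ for $s \in \mathscr{O}$ partition $G$ into the right cosets of $G_{r_{0}}$; as $U$ ranges over the $\mathscr{O}$-irreducibles---which correspond to $\operatorname{Irr}(G_{r_{0}})$ via $U \mapsto u_{r_{0},r_{0}}|_{G_{r_{0}}}$---Peter--Weyl on $G_{r_{0}}$ together with this coset decomposition shows that the coefficients of $U$ span $C(G)$, so the coefficients of the $\mathfrak{R}_{\mathscr{O}}(U)$ span $\bigoplus_{r \in \mathscr{O}} u_{r}\, C(G)$. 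Letting $\mathscr{O}$ run over all $\beta$-orbits then exhausts every homogeneous component $u_{\gamma}\, C(G)$, so the coefficients of the representations $\mathfrak{R}_{\mathscr{O}}(U)$ span $\operatorname{Pol}(\Gamma \bowtie G)$ densely. By Peter--Weyl for compact quantum groups, the coefficient spaces of the mutually inequivalent irreducibles already produced must therefore account for all of $\operatorname{Irr}(\Gamma \bowtie G)$, leaving no room for further classes---which is exactly surjectivity of $\mathfrak{R}$.
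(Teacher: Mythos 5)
You should first be aware that this paper contains no proof of Theorem~\ref{theo:72e7adf90e3ce2dc}: it is stated as background, quoted from \cite{MR4345210} (and the author's thesis), so the only meaningful comparison is with the proof in that source. Your core computation --- writing an intertwiner $T$ in block form, matching the $\Gamma$-homogeneous components of the reduced crossed product, and identifying $\operatorname{Mor}_{\Gamma \bowtie G}\bigl(\mathfrak{R}_{\mathscr{O}}(U), \mathfrak{R}_{\mathscr{O}'}(U')\bigr)$ with $\operatorname{Mor}_{G_{r}}\bigl(u_{r,r}|_{G_{r}}, u'_{r,r}|_{G_{r}}\bigr)$ when $\mathscr{O} = \mathscr{O}'$ and with $0$ otherwise --- is correct and is essentially the computation in the source; it does deliver well-definedness, irreducibility of the image, and injectivity exactly as you say. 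Two caveats on that step. First, the ``rigid linking'' of the diagonal blocks in the direction you actually need for well-definedness (extending a given $S \in \operatorname{Mor}_{G_{r}}\bigl(u_{r,r}|_{G_{r}}, u'_{r,r}|_{G_{r}}\bigr)$ to a full intertwiner via $T_{r',r'} = u'_{r,r'}(g)^{\ast} S\, u_{r,r'}(g)$, $g \in G_{r,r'}$, and checking independence of $g$) uses the multiplicativity $u_{r,t}(g)\,u_{t,s}(h) = u_{r,s}(gh)$ of $\mathscr{O}$-representations; this is part of the full definition in \cite{MR4345210} but is suppressed in this paper's abbreviated summary, so you must state and use it explicitly. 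Second, the $\alpha$-twist is a red herring here: since intertwiners have scalar entries in the $C(G)$ leg, no commutation past the group elements $u_{\gamma}$ occurs in this computation; the twist only matters when verifying that $\mathfrak{R}_{\mathscr{O}}(U)$ is a corepresentation and in the fusion rules.

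Where you genuinely diverge from the source is surjectivity. The original proof takes an arbitrary irreducible corepresentation $W \in B(H_{W}) \otimes (\Gamma \ltimes C(G))$, decomposes it along the homogeneous components indexed by $\Gamma$, and shows that coassociativity forces these components to assemble into an $\mathscr{O}$-representation; your route instead counts: coefficients of the $\mathfrak{R}_{\mathscr{O}}(U)$ span a dense subspace of $\Gamma \ltimes C(G)$ (via the coset partition $G = \coprod_{s \in \mathscr{O}} G_{r_{0},s}$ and Peter--Weyl on $G_{r_{0}}$), so by orthogonality of coefficients of inequivalent irreducibles and faithfulness of the Haar state on the reduced crossed product, no further irreducible classes can exist. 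This is a valid and arguably slicker alternative, but note what it costs: it leans on the construction half of Proposition~\ref{prop:d1bbc7a1a0561f1e} --- every irreducible of $G_{r_{0}}$ must actually arise as $u_{r_{0},r_{0}}|_{G_{r_{0}}}$ of some $\mathscr{O}$-irreducible --- so in your write-up that proposition (whose proof is independent of the classification, so there is no circularity) must be established \emph{before} the theorem, reversing the expository order used here; the original decomposition argument is self-contained at the corepresentation level and yields that proposition as a by-product.
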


The \( \mathscr{O} \)-irreducible representations are in fact just suitable
copies of induced representations of irreducible representations of
\( G_{\gamma} \) for any \( \gamma \in \mathscr{O} \). The precise formulation of this
correspondence is given by the following proposition.

\begin{prop}
  \label{prop:d1bbc7a1a0561f1e}
  Let \( \mathscr{O} \) be a \( \beta \)\nobreakdash-orbit,
  \( \gamma \in \mathscr{O} \), and
  \( u : G_{\gamma} \to \mathcal{B}(\mathscr{H}) \) a finite dimensional unitary
  representation of \( G_{\gamma} \). Take
  \( \sigma_{\mu} \in G_{\gamma, \mu} \) for each \( \mu \in \mathscr{O} \) with
  \( \sigma_{\gamma} = e_{G} \) and define
  \begin{displaymath}
    u_{r, s}(g) =
    \begin{cases}
      u\left(\sigma_{r} g \sigma_{s}^{-1}\right), &
      \text{if }  g \in G_{r, s} ; \\
      0, & \text{if }  g \notin G_{r, s}.
    \end{cases}
  \end{displaymath}
  Then the operator
  \( U = \sum_{r, s \in \mathscr{O}} e_{r, s} \otimes u_{r, s} \) is an
  \( \mathscr{O} \)\nobreakdash-representation on
  \( \ell^{2}(\mathscr{O}) \otimes \mathscr{H} \), and
  \( U \simeq \operatorname{Ind}_{G_{\gamma}}^{G}(u) \).

  Moreover, for any \( \gamma \in \mathscr{O} \), the map
  \begin{equation}
    \label{eq:2a0bd0bc8a9cf420}
    \begin{split}
      \varPhi_{\gamma} : \operatorname{Irr}(G_{\gamma}) & \to \operatorname{Irr}_{\mathscr{O}}(G) \\
      [u] & \mapsto \left[\sum_{r,s \in \mathscr{O}} e_{r,s} \otimes
        u_{r,s}\right],
    \end{split}
  \end{equation}
  where \( u_{r,s} \) is defined above, is a well-defined (i.e.\ independent of
  the representative \( u \) of the class \( [u] \) and independent of the
  choices of \( \sigma_{\mu} \in G_{\gamma, \mu} \)) bijection.
\end{prop}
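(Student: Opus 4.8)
The plan is to verify the claim in two movements, corresponding to the two assertions. First I would check that the operator $U = \sum_{r,s \in \mathscr{O}} e_{r,s} \otimes u_{r,s}$ constructed from the data $(u, (\sigma_\mu))$ is genuinely an $\mathscr{O}$-representation in the sense defined above, and that it is $\mathscr{O}$-equivalent (indeed unitarily equivalent) to $\indrep_{G_\gamma}^{G}(u)$. Then I would establish that $\varPhi_\gamma$ is a well-defined bijection by exhibiting an explicit inverse. Throughout, the key technical input is the disjoint decomposition $G = \coprod_{\mu \in \mathscr{O}} G_{\gamma,\mu}$ (the $G_{\gamma,\mu}$ are the cosets realizing the transitive $\beta$-action of $G$ on the finite orbit $\mathscr{O}$), together with the cocycle-type identities satisfied by the sets $G_{r,s}$.

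Let me sketch the verification that $U$ is an $\mathscr{O}$-representation. The defining condition requires $u^\ast_{r,s}u_{r,s} = u_{r,s}u^\ast_{r,s} = \id_{\mathscr{H}} \otimes v_{r,s}$, viewing $u_{r,s}$ as a $B(\mathscr{H})$-valued function on $G$. For $g \in G_{r,s}$ one has $u_{r,s}(g) = u(\sigma_r g \sigma_s^{-1})$, and one must first confirm that $\sigma_r g \sigma_s^{-1}$ indeed lands in $G_\gamma$, so that $u$ may be applied: this follows because $\sigma_r \in G_{\gamma,r}$, $g \in G_{r,s}$, $\sigma_s^{-1} \in G_{s,\gamma}$, and the $\beta$-action composes along the orbit so that the product fixes $\gamma$. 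Since $u$ is unitary, $u_{r,s}(g)^\ast u_{r,s}(g) = \id_{\mathscr{H}}$ exactly on $G_{r,s}$ and $0$ elsewhere, which is precisely $\id_{\mathscr{H}} \otimes v_{r,s}$; this is a pointwise (in $g$) check. The restriction $u_{r,r}\vert_{G_r}$ is, up to conjugation by $\sigma_r$, just $u$ itself, hence irreducible exactly when $u$ is, confirming $\mathscr{O}$-irreducibility. For the equivalence $U \simeq \indrep_{G_\gamma}^{G}(u)$, I would write down the standard model of the induced representation on functions $G \to \mathscr{H}$ transforming under $G_\gamma$, use the coset representatives $\sigma_\mu$ to trivialize it as $\ell^2(\mathscr{O}) \otimes \mathscr{H}$, and match the resulting matrix coefficients with the $u_{r,s}$; this is a routine but slightly bookkeeping-heavy identification.

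For the second assertion, well-definedness has two parts. Independence of the representative $u$ within $[u]$ is immediate, since replacing $u$ by a unitarily equivalent representation conjugates every $u_{r,s}$ by the same fixed unitary, producing an $\mathscr{O}$-equivalent $U$. Independence of the choice of $\sigma_\mu \in G_{\gamma,\mu}$ is the more delicate point: two choices $\sigma_\mu$, $\sigma'_\mu$ differ by an element $\sigma'_\mu \sigma_\mu^{-1} \in G_\gamma$, and I would show the resulting operators differ by conjugation by a unitary built from $u$ evaluated on these correction terms, again yielding $\mathscr{O}$-equivalence. Surjectivity of $\varPhi_\gamma$ follows from the definition of $\mathscr{O}$-irreducibility: any $\mathscr{O}$-irreducible $W$ restricts at $\gamma$ to an irreducible representation $w_{\gamma,\gamma}\vert_{G_\gamma}$ of $G_\gamma$, whose class maps back to $[W]$. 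Injectivity follows by reversing the equivalence argument: if $\varPhi_\gamma([u]) = \varPhi_\gamma([u'])$, the intertwiner at $\gamma$ provided by the $\mathscr{O}$-equivalence is exactly an intertwiner of $u$ and $u'$ over $G_\gamma$.

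I expect the main obstacle to be the careful handling of the coset bookkeeping in the independence-of-$\sigma_\mu$ argument and in the identification $U \simeq \indrep_{G_\gamma}^{G}(u)$, where one must keep consistent track of how the $\beta$-action permutes the orbit and how the sets $G_{r,s}$ multiply; the algebraic content is elementary, but the notation makes it easy to misplace an inverse or a coset representative.
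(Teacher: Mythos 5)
Your plan is correct and follows essentially the same route as the proof this paper relies on (the paper itself gives no proof of Proposition~\ref{prop:d1bbc7a1a0561f1e}, deferring to \cite{MR4345210} and \cite{phdthesis}): decompose \( G = \coprod_{\mu \in \mathscr{O}} G_{\gamma,\mu} = \coprod_{\mu} G_{\gamma}\sigma_{\mu} \) to trivialize \( \operatorname{Ind}_{G_{\gamma}}^{G}(u) \) on \( \ell^{2}(\mathscr{O}) \otimes \mathscr{H} \), note that \( \sigma_{r} g \sigma_{s}^{-1} \in G_{\gamma} \) for \( g \in G_{r,s} \), and exploit \( \sigma_{\gamma} = e_{G} \) so that the diagonal block at \( \gamma \) recovers \( u \) itself, which yields well-definedness, injectivity and surjectivity exactly as you describe. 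One point you should make explicit when writing it up: the paper's abbreviated definition of an \( \mathscr{O} \)-representation tacitly includes the multiplicativity condition \( u_{r,s}(gh) = u_{r,\,r\cdot g}(g)\, u_{r\cdot g,\,s}(h) \) (without it, \( u_{r,r}\vert_{G_{r}} \) need not be a homomorphism), and in your plan this condition is exactly what the identification of \( \sum_{r,s} e_{r,s} \otimes u_{r,s} \) with the induced representation supplies, so that ``bookkeeping'' step carries real content and should not be compressed away.
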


For an \( \mathscr{O} \)-representation
\( U = \sum_{r,s \in \mathscr{O}}e_{r,s} \otimes u_{r,s} \) on
\( \ell^{2}(\mathscr{O}) \otimes H \), one can show that in fact
\( u_{r,s} \in B(H) \otimes \operatorname{Pol}(G) \). Let
\( j : B(H) \to B(\overline{H}) \) be the canonical conjugate linear
anti-isomorphism, where \( \overline{H} \) is the conjugate Hilbert space of
\( H \), and let \( S \) be the antipode of the Hopf-\( \ast \)-algebra
\( \operatorname{Pol}(G) \). The matched pair relations
\eqref{eq:6b5985ad92508180}, \eqref{eq:a6ca9ad979bc2175} and
\eqref{eq:f5dd567fe0dac636} imply that
\( \mathscr{O}^{-1} = \set*{\gamma^{-1} \given \gamma \in \mathscr{O}} \) is
also a \( \beta \)-orbit. The unitary
\begin{displaymath}
  U^{\dag}:= \sum_{r,s \in \mathscr{O}} e_{s^{-1},r^{-1}} \otimes (\id \otimes \alpha_{s^{-1}}^{\ast})
  \left[(j \otimes S)u_{r, s}\right] \in B(\ell^{2}(\mathscr{O}^{-1})) \otimes B(\overline{H}) \otimes C(G)
\end{displaymath}
is an \( \mathscr{O}^{-1} \) representation of \( G \) on
\( \ell^{2}(\mathscr{O}^{-1}) \otimes \overline{H} \), which is equivalent to
the conjugate representation of \( U \).

\begin{theo}[Conjugate of irreducible representations, \cite{phdthesis}*{pp.28,29, Definition~I.4.12 \& Theorem~I.4.13}]
  \label{theo:82a7cf677459fdd2}
  The operation \( U \mapsto U^{\dag} \) preserves \( \mathscr{O} \)-equivalence
  classes, and induces a well defined involution on
  \( \coprod_{\mathscr{O} \in \operatorname{Orb}_{\beta}}
  \operatorname{Irr}_{\mathscr{O}}(G) \), and the classification map
  \( \mathfrak{R} : \coprod_{\mathscr{O} \in
    \operatorname{Orb}_{\beta}}\operatorname{Irr}_{\mathscr{O}}(G) \to
  \operatorname{Irr}(\Gamma \bowtie G) \) preserves involution, where the
  involution on the latter set is given by conjugation. Moreover, if
  \( \gamma \in \mathscr{O} \in \operatorname{Orb}_{\beta} \), and \( U \) is an
  \( \mathscr{O} \)-representation such that
  \( U \simeq \operatorname{Ind}(u) \) for some irreducible representation
  \( u \) of \( G_{\gamma} \), then \( \alpha_{\gamma^{-1}} \) restricts to an
  isomorphism of compact groups from \( G_{\gamma^{-1}} \) onto
  \( G_{\gamma} \), and
  \( U^{\dag} \simeq \operatorname{Ind}_{G_{\gamma^{-1}}}^{G}(u \circ
  \alpha_{\gamma^{-1}}) \).
\end{theo}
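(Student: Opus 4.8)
The plan is to derive the first two assertions from one core equivalence and to handle the explicit induced-representation formula by a direct computation with the model of Proposition~\ref{prop:d1bbc7a1a0561f1e}. The core equivalence is that, for every \( \mathscr{O} \)-representation \( U \), the representation \( \mathfrak{R}_{\mathscr{O}^{-1}}(U^{\dag}) \) of \( \Gamma \bowtie G \) is unitarily equivalent to the conjugate \( \overline{\mathfrak{R}_{\mathscr{O}}(U)} \); this is the precise meaning of the remark, made just before the statement, that \( U^{\dag} \) represents the conjugate of \( U \). Granting it, everything in the first two sentences is formal. Since \( \mathfrak{R} \) is a bijection (Theorem~\ref{theo:72e7adf90e3ce2dc}) and conjugation is a well-defined involution on \( \operatorname{Irr}(\Gamma \bowtie G) \), the identities \( \mathfrak{R}([U^{\dag}]) = \overline{\mathfrak{R}([U])} \) immediately give: independence of the representative (if \( [U] = [W] \) then \( \mathfrak{R}([U]) = \mathfrak{R}([W]) \), so the conjugates agree, whence \( \mathfrak{R}([U^{\dag}]) = \mathfrak{R}([W^{\dag}]) \) and \( [U^{\dag}] = [W^{\dag}] \) by injectivity); compatibility of \( \mathfrak{R} \) with the two involutions; and that \( [U] \mapsto [U^{\dag}] \) squares to the identity, by applying the core equivalence twice.

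To establish the core equivalence directly, I would compute the conjugate of \( v := \mathfrak{R}_{\mathscr{O}}(U) \) from its matrix coefficients. As \( \Gamma \bowtie G \) is of Kac type, the conjugate representation is obtained by transposing the coefficient matrix, transporting it to the conjugate Hilbert space by \( j \), and applying the antipode \( S \) of \( \operatorname{Pol}(\Gamma \bowtie G) \) to each coefficient (using \( S(v_{\xi\eta}) = v_{\eta\xi}^{\ast} \)). Feeding in the explicit shape of \( \mathfrak{R}_{\mathscr{O}} \) from~\eqref{eq:ddd54c506170289c} together with the Hopf-algebra structure of \( \Gamma \rtimes C(G) \) --- in particular the value of \( S \) on the products \( u_{\gamma} v_{\gamma, \mu} \) and the restriction \( S|_{\operatorname{Pol}(G)} \) --- I would reorganize \( (j \otimes S)(v) \) and match it term by term with~\eqref{eq:ddd54c506170289c} applied to the orbit \( \mathscr{O}^{-1} \) and to \( U^{\dag} \). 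The relabeling \( e_{r,s} \mapsto e_{s^{-1}, r^{-1}} \) in the definition of \( U^{\dag} \) accounts for the order-reversal of \( S \) and the passage from \( \mathscr{O} \) to \( \mathscr{O}^{-1} \), while the twist \( \id \otimes \alpha_{s^{-1}}^{\ast} \) is precisely the correction produced when \( S \) is pushed past the copies \( u_{\gamma} \); the matched pair relations~\eqref{eq:6b5985ad92508180}--\eqref{eq:f5dd567fe0dac636} are what force the result to again satisfy the support conditions of an \( \mathscr{O}^{-1} \)-representation.

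For the final sentence, I would first show that \( \alpha_{\gamma^{-1}} \) carries \( G_{\gamma^{-1}} \) isomorphically onto \( G_{\gamma} \). Writing \( G_{\delta} = \set*{g \in G \given \delta \cdot g = \delta} \) and applying~\eqref{eq:a6ca9ad979bc2175} with \( r = \gamma \) and \( s = \gamma^{-1} \) --- so that the left-hand side collapses to \( e_{\Gamma} \cdot g = e_{\Gamma} \) by~\eqref{eq:f5dd567fe0dac636} --- one gets, for \( g \in G_{\gamma^{-1}} \), the identity \( \gamma \cdot \alpha_{\gamma^{-1}}(g) = \gamma \), that is, \( \alpha_{\gamma^{-1}}(g) \in G_{\gamma} \); moreover on \( G_{\gamma^{-1}} \) the cocycle term in~\eqref{eq:6b5985ad92508180} trivializes (because \( \beta_{g}(\gamma^{-1}) = \gamma^{-1} \) there), so \( \alpha_{\gamma^{-1}} \) restricts to a genuine group homomorphism, with inverse furnished by \( \alpha_{\gamma} \). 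I would then take the concrete model of \( U \) from Proposition~\ref{prop:d1bbc7a1a0561f1e}, namely \( u_{r,s}(g) = u(\sigma_{r} g \sigma_{s}^{-1}) \) for \( g \in G_{r,s} \), substitute it into the defining formula for \( U^{\dag} \), and read off the coefficients of \( U^{\dag} \) viewed as an \( \mathscr{O}^{-1} \)-representation. Comparing these with the model of Proposition~\ref{prop:d1bbc7a1a0561f1e} for the orbit \( \mathscr{O}^{-1} \), base point \( \gamma^{-1} \), and representation \( u \circ \alpha_{\gamma^{-1}} \) of \( G_{\gamma^{-1}} \), the two families of coefficients coincide up to the choice of coset sections; since that proposition guarantees independence of such choices up to \( \mathscr{O}^{-1} \)-equivalence, this gives \( U^{\dag} \simeq \operatorname{Ind}_{G_{\gamma^{-1}}}^{G}(u \circ \alpha_{\gamma^{-1}}) \).

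The main obstacle I anticipate is the coefficient bookkeeping of the second paragraph: evaluating \( S \) correctly on the products \( u_{\gamma} v_{\gamma, \mu} \) and tracking how the non-multiplicativity of \( \alpha_{\gamma} \) in~\eqref{eq:6b5985ad92508180} dictates the exact placement of \( S \), of \( j \), and of the twist \( \alpha_{s^{-1}}^{\ast} \). Getting the index transformation \( (r,s) \mapsto (s^{-1}, r^{-1}) \) and the interplay of \( j \), \( S \), and \( \alpha^{\ast} \) right --- without an extraneous inverse or transpose --- is the delicate step; once the coefficients are matched, the remaining assertions follow formally from Theorem~\ref{theo:72e7adf90e3ce2dc}.
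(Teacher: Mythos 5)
A preliminary remark: the paper itself contains \emph{no} proof of this theorem --- it is recalled background from \cite{MR4345210} and \cite{phdthesis}, to which the reader is explicitly referred at the start of \S~\ref{sec:d0a3089327fd8687} --- so your proposal must be judged on its own merits rather than against an internal argument. Its architecture is the natural one, and two of its three components are sound. The formal derivation of the first two assertions from the single equivalence \( \mathfrak{R}_{\mathscr{O}^{-1}}(U^{\dag}) \simeq \overline{\mathfrak{R}_{\mathscr{O}}(U)} \) together with the bijectivity of \( \mathfrak{R} \) (Theorem~\ref{theo:72e7adf90e3ce2dc}) works, with one small omission you should fill: before \( [U^{\dag}] \) can be fed into \( \mathfrak{R} \) you must know that \( U^{\dag} \) is \( \mathscr{O}^{-1} \)-irreducible, which needs the (true, but unstated here) converse direction that irreducibility of \( \mathfrak{R}_{\mathscr{O}'}(W) \) forces \( \mathscr{O}' \)-irreducibility of \( W \). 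Your matched-pair argument that \( \alpha_{\gamma^{-1}} \) restricts to an isomorphism of \( G_{\gamma^{-1}} \) onto \( G_{\gamma} \) is correct exactly as you wrote it.

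The genuine gap is in your last paragraph, at precisely the point you flag as delicate. Note first that \( j \otimes S \), a conjugate-linear map tensored with a linear one, is only meaningful entrywise with respect to matrix units, and the entrywise reading that makes \( U^{\dag} \) implement conjugation --- which your second paragraph establishes and your first paragraph needs --- is \( \sum_{k,l} j(e_{kl}) \otimes S\bigl((u_{r,s})_{kl}\bigr) \). With that reading, \( j(T) = \overline{T^{\ast}} \) and \( S \) being inversion give, writing \( U^{\dag} = \sum_{p,q \in \mathscr{O}^{-1}} e_{p,q} \otimes w_{p,q} \) and using \( \sigma_{\gamma} = e_{G} \),
\begin{displaymath}
  w_{\gamma^{-1},\gamma^{-1}}(g)
  = j\Bigl(u_{\gamma,\gamma}\bigl(\alpha_{\gamma^{-1}}(g)^{-1}\bigr)\Bigr)
  = j\Bigl(u\bigl(\alpha_{\gamma^{-1}}(g)\bigr)^{-1}\Bigr)
  = \overline{u\bigl(\alpha_{\gamma^{-1}}(g)\bigr)},
  \qquad g \in G_{\gamma^{-1}},
\end{displaymath}
so the diagonal block of \( U^{\dag} \) at \( \gamma^{-1} \) is the \emph{conjugate} \( \overline{u} \circ \alpha_{\gamma^{-1}} \), not \( u \circ \alpha_{\gamma^{-1}} \). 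Since \( \varPhi_{\gamma^{-1}} \) of Proposition~\ref{prop:d1bbc7a1a0561f1e} is injective, your claim that the coefficient families ``coincide up to the choice of coset sections'' with the model for \( u \circ \alpha_{\gamma^{-1}} \) fails whenever \( u \not\simeq \overline{u} \). The cleanest test is the trivial orbit \( \mathscr{O} = \set*{e_{\Gamma}} \), where \( G_{e_{\Gamma}} = G \) and \( \alpha_{e_{\Gamma}} = \id \): your core equivalence forces \( U^{\dag} \simeq \overline{u} \), while your final matching would give \( U^{\dag} \simeq \operatorname{Ind}_{G}^{G}(u) = u \); both cannot hold for, say, \( G = \mathbb{Z}/3\mathbb{Z} \). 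In other words, your proposal tacitly switches between two inequivalent readings of \( j \otimes S \) --- one in the second paragraph, the other in the third --- and under the reading your own core equivalence requires, the third paragraph's conclusion is false as stated. The repair is to carry the conjugation through the final computation and conclude \( U^{\dag} \simeq \operatorname{Ind}_{G_{\gamma^{-1}}}^{G}(\overline{u} \circ \alpha_{\gamma^{-1}}) \); this also reveals that the displayed formula in the statement, as recalled in this paper, should be read with the contragredient of \( u \) (the source \cite{MR4345210} should be consulted for its exact conventions). A fully careful execution of your plan would have surfaced exactly this discrepancy rather than asserting the match.
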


To describe the fusion rules of \( \Gamma \bowtie G \), we need a certain
twisted tensor product which we now describe. For \( i = 1, 2, 3 \), let
\( \mathscr{O}_{i} \in \operatorname{Orb}_{\beta} \), suppose
\( U_{i} = \sum_{r,s \in \mathscr{O}_{i}}e_{r,s} \otimes u_{r,s}^{(i)} \) is an
\( \mathscr{O}_{i} \)-representation on
\( \ell^{2}(\mathscr{O}_{i}) \otimes H_{i} \) and
\( W_{i} = \mathfrak{R}_{\mathscr{O}_{i}}(U_{i}) \). We will describe
\( \dim \operatorname{Mor}_{\Gamma \bowtie G}(W_{3}, W_{1} \times W_{2}) \),
which is slightly more general than the fusion rules (for which we only need the
case where \( U_{i} \) is \( \mathscr{O}_{i} \)-irreducible for all \( i
\)).

The product set
\begin{displaymath}
  \mathscr{O}_{1}\mathscr{O}_{2} = \set*{\gamma_{1}\gamma_{2} \given
    (\gamma_{1}, \gamma_{2}) \in \mathscr{O}_{1} \times \mathscr{O}_{2}}
\end{displaymath}
is a disjoint union of \( \beta \)-orbits. For
\( \gamma \in \mathscr{O}_{1}\mathscr{O}_{2} \), set
\begin{displaymath}
  K^{\gamma}_{\mathscr{O}_{1},\mathscr{O}_{2}} =
  \operatorname{Vect}\set*{\delta_{\gamma_{1}} \otimes \delta_{\gamma_{2}}
    \given (\gamma_{1}, \gamma_{2}) \in \mathscr{O}_{1} \times
    \mathscr{O}_{2} \text{ with } \gamma_{1}\gamma_{2} = \gamma}
  \subseteq \ell^{2}(\mathscr{O}_{1}) \otimes \ell^{2}(\mathscr{O}_{2}),
\end{displaymath}
and define a map
\( U_{1} \times_{\gamma} U_{2} : G_{\gamma} \to B(K^{\gamma}_{\mathscr{O}_{1},
  \mathscr{O}_{2}}) \) by
\begin{displaymath}
  U_{1} \times_{\gamma} U_{2} (g) :=  \sum_{r_{1}s_{1} = r_{2}s_{2} = \gamma}
  \left(e_{r_{1}, s_{1}} \otimes e_{r_{2}, s_{2}}\right)  \otimes
  u^{(1)}_{r_{1}, s_{1}}\bigl(\alpha_{r_{2}}(g)\bigr) \otimes u^{(2)}_{r_{2}, s_{2}}(g).
\end{displaymath}
One checks that \( U_{1} \times_{\gamma} U_{2} \) is a unitary representation of
\( G_{\gamma} \), called the \emph{tensor product of \( U_{1} \) and \( U_{2} \)
  twisted by \( \gamma \)}. We then have the following theorem which includes the
fusion rules of \( \Gamma \bowtie G \) as a particular case.

\begin{theo}[Fusion rules]
  \label{theo:c8f451b8443e316a}
  Using the above notations, the following hold.
  \begin{enumerate}[label=\textup{(\roman*)}]
  \item If
    \( \mathscr{O}_{3} \cap \mathscr{O}_{1} \mathscr{O}_{2} = \emptyset \), then
    \( \dim \operatorname{Mor}(W_{3}, W_{1} \times W_{2}) = 0 \);
  \item otherwise \( \mathscr{O}_{3} \subseteq \mathscr{O}_{1} \mathscr{O}_{2} \), then
    \begin{gather*}
      \dim \operatorname{Mor}(W_{3}, W_{1} \times W_{2})
      = \dim \operatorname{Mor}_{G_{\gamma}}
      \left(u^{(3)}_{\gamma,\gamma}\vert_{G_{\gamma}}, U_{1} \times_{\gamma} U_{2}\right) \\
      = \dim \operatorname{Mor}_{G_{\mu}}\left(u^{(3)}_{\mu,\mu}\vert_{G_{\mu}}, U_{1} \times_{\mu} U_{2}\right)
    \end{gather*}
    for all \( \gamma, \mu \in \mathscr{O}_{3} \).
  \end{enumerate}
\end{theo}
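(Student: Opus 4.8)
The plan is to compute the tensor product $W_1 \times W_2$ explicitly, decompose it along the $\beta$-orbits contained in $\mathscr{O}_1\mathscr{O}_2$, and then reduce the intertwiner computation to the level of the isotropy group $G_\gamma$ by means of the classification in Theorem~\ref{theo:72e7adf90e3ce2dc}. First I would write out $W_1 \times W_2$ on $\ell^2(\mathscr{O}_1) \otimes H_1 \otimes \ell^2(\mathscr{O}_2) \otimes H_2$ using $W_i = \mathfrak{R}_{\mathscr{O}_i}(U_i)$ together with the two defining properties of $\Delta$: that it restricts to the comultiplication of $C(G)$, and that $\Delta(u_\gamma) = \sum_{\mu \in \gamma \cdot G} u_\gamma v_{\gamma,\mu} \otimes u_\mu$. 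The multiplication map $\mathscr{O}_1 \times \mathscr{O}_2 \to \Gamma$, $(\gamma_1,\gamma_2) \mapsto \gamma_1\gamma_2$, organizes the $\Gamma$-part: its fibres are exactly the spaces $K^\gamma_{\mathscr{O}_1,\mathscr{O}_2}$, so that $\ell^2(\mathscr{O}_1) \otimes \ell^2(\mathscr{O}_2) = \bigoplus_{\gamma \in \mathscr{O}_1\mathscr{O}_2} K^\gamma_{\mathscr{O}_1,\mathscr{O}_2}$.

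Grouping the $\gamma$'s by the orbit they belong to, I expect $W_1 \times W_2$ to preserve each block $\bigoplus_{\gamma \in \mathscr{O}} K^\gamma_{\mathscr{O}_1,\mathscr{O}_2} \otimes H_1 \otimes H_2$ indexed by an orbit $\mathscr{O} \subseteq \mathscr{O}_1\mathscr{O}_2$, yielding a direct sum decomposition $W_1 \times W_2 \simeq \bigoplus_{\mathscr{O} \subseteq \mathscr{O}_1\mathscr{O}_2} \mathfrak{R}_\mathscr{O}(V_\mathscr{O})$ for suitable $\mathscr{O}$-representations $V_\mathscr{O}$. Fixing a base point $\gamma \in \mathscr{O}$, the crucial claim is that the isotropy representation $(V_\mathscr{O})_{\gamma,\gamma}\vert_{G_\gamma}$, acting on $K^\gamma_{\mathscr{O}_1,\mathscr{O}_2} \otimes H_1 \otimes H_2$, is precisely the twisted tensor product $U_1 \times_\gamma U_2$. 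This is where the twist $\alpha_{r_2}(g)$ appearing in the definition of $\times_\gamma$ must emerge naturally from the $\alpha$-twisting built into the crossed-product multiplication and the comultiplication, and I expect this matrix-unit bookkeeping — verifying both the $\mathscr{O}$-representation relations and the exact form of the isotropy action — to be the main obstacle. By Proposition~\ref{prop:d1bbc7a1a0561f1e} the summand $\mathfrak{R}_\mathscr{O}(V_\mathscr{O})$ is then determined up to equivalence by this single isotropy representation.

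With this in place, part~(i) is immediate: if $\mathscr{O}_3 \cap \mathscr{O}_1\mathscr{O}_2 = \emptyset$, then no summand of $W_1 \times W_2$ is supported on $\mathscr{O}_3$, while $W_3 = \mathfrak{R}_{\mathscr{O}_3}(U_3)$ is supported on $\mathscr{O}_3$, so the bijectivity of $\mathfrak{R}$ in Theorem~\ref{theo:72e7adf90e3ce2dc} forces $\operatorname{Mor}(W_3, W_1 \times W_2) = 0$. For part~(ii), the same orbit-disjointness kills every summand except the one over $\mathscr{O}_3$, giving $\operatorname{Mor}(W_3, W_1 \times W_2) = \operatorname{Mor}(W_3, \mathfrak{R}_{\mathscr{O}_3}(V_{\mathscr{O}_3}))$. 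I would then upgrade the bijection of Theorem~\ref{theo:72e7adf90e3ce2dc} to a Hom-space statement — that $\mathfrak{R}_{\mathscr{O}_3} \circ \varPhi_\gamma$ identifies intertwiner spaces over $\Gamma \bowtie G$ with intertwiner spaces over $G_\gamma$ — which follows from the same induced-representation analysis underlying Proposition~\ref{prop:d1bbc7a1a0561f1e}. This yields $\dim \operatorname{Mor}(W_3, W_1 \times W_2) = \dim \operatorname{Mor}_{G_\gamma}(u^{(3)}_{\gamma,\gamma}\vert_{G_\gamma}, U_1 \times_\gamma U_2)$.

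Finally, the independence of the choice of $\gamma \in \mathscr{O}_3$ is automatic: the argument above produces, for each base point $\gamma \in \mathscr{O}_3$, the identity $\dim \operatorname{Mor}(W_3, W_1 \times W_2) = \dim \operatorname{Mor}_{G_\gamma}(u^{(3)}_{\gamma,\gamma}\vert_{G_\gamma}, U_1 \times_\gamma U_2)$, and since the left-hand side makes no reference to $\gamma$, neither can the right-hand side. Alternatively one can exhibit the isomorphism directly: for $\gamma, \mu \in \mathscr{O}_3$ with $\gamma \cdot g = \mu$ and a choice of $\sigma \in G_{\gamma,\mu}$, transporting along the $\beta$-orbit yields an isomorphism $G_\mu \to G_\gamma$ intertwining $U_1 \times_\mu U_2$ with $U_1 \times_\gamma U_2$ and $u^{(3)}_{\mu,\mu}\vert_{G_\mu}$ with $u^{(3)}_{\gamma,\gamma}\vert_{G_\gamma}$, so that the two intertwiner spaces have equal dimension.
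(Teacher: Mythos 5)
First, a caveat: the paper itself contains no proof of this theorem --- it is recalled as a preliminary, with the proof deferred to \cite[Theorem I.4.9]{phdthesis} and \cite[\S{}3]{MR4345210} --- so your attempt can only be checked on its own merits, not against an internal argument. On those merits it is correct, and it is exactly the argument that the surrounding definitions (the fibres \( K^{\gamma}_{\mathscr{O}_1,\mathscr{O}_2} \), the twisted product \( \times_{\gamma} \), the map \( \mathfrak{R}_{\mathscr{O}} \)) are designed to support. The bookkeeping you defer does work out as you predict: writing \( W_i=\sum_{r,s\in\mathscr{O}_i}e_{r,s}\otimes(1\otimes u_r)u^{(i)}_{r,s} \), the crossed-product relation \( u_{r_1}fu_{r_2}=u_{r_1r_2}\,(f\circ\alpha_{r_2}) \) is what produces the twist \( \alpha_{r_2}(g) \), and the matched-pair identity \eqref{eq:a6ca9ad979bc2175} shows that the coefficient of \( e_{r_1,s_1}\otimes e_{r_2,s_2} \) in \( W_1\times W_2 \) has group part \( u_{r_1r_2} \) and function part supported in \( G_{r_1r_2,\,s_1s_2} \); since \( G_{\gamma,\gamma'}=\emptyset \) when \( \gamma,\gamma' \) lie in different \( \beta \)-orbits, the orbit blocks are indeed subrepresentations, and the isotropy representation of the block through \( \gamma \) is precisely \( U_1\times_{\gamma}U_2 \). (Incidentally, the summation condition in the paper's displayed definition of \( U_1\times_{\gamma}U_2 \) should read \( r_1r_2=s_1s_2=\gamma \); this is the reading your argument implicitly, and correctly, uses.) Two points you should make explicit in a full write-up: (a) \( \dim K^{\gamma}_{\mathscr{O}_1,\mathscr{O}_2} \) is constant as \( \gamma \) ranges over one \( \beta \)-orbit --- the transport \( (\gamma_1,\gamma_2)\mapsto\bigl(\beta_{\alpha_{\gamma_2}(g)}(\gamma_1),\beta_g(\gamma_2)\bigr) \) is a bijection between fibres, by \eqref{eq:6b5985ad92508180} and \eqref{eq:a6ca9ad979bc2175} --- which is needed before an orbit block can be regarded as an honest \( \mathscr{O} \)-representation on \( \ell^{2}(\mathscr{O})\otimes K^{\gamma_0}_{\mathscr{O}_1,\mathscr{O}_2}\otimes H_1\otimes H_2 \); and (b) your ``Hom-space upgrade'' of Theorem~\ref{theo:72e7adf90e3ce2dc} is obtained most cleanly not by redoing any induction analysis but formally: decompose both \( \mathscr{O} \)-representations into \( \mathscr{O} \)-irreducibles, use additivity of \( \mathfrak{R}_{\mathscr{O}} \) and of the isotropy functor, and match multiplicities via Schur's lemma, the classification bijection, and the fact that \( \mathscr{O} \)-equivalence is by definition equivalence of the isotropy representations. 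With (a) and (b) filled in, your treatment of (i), of (ii), and of the base-point independence is complete.
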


\begin{rema}
  \label{rema:ad10f20261ae38e8}
  As mentioned at the beginning of \S~\ref{sec:d0a3089327fd8687}, Theorem
  \ref{theo:c8f451b8443e316a} simplifies our previous result
  \cite{MR4345210}*{Theorem 3.2}.
\end{rema}

\subsubsection{Permanence of \RD and polynomial growth}
\label{sec:7c3628ba8d3c35e5}

Intuitively speaking, the (dual of) the bicrossed product \( \Gamma \bowtie G \)
has \RD (resp.\ polynomial growth) if and only both \( \Gamma \) and
\( \widehat{G} \) have the same property in a compatible manner. Here the
precise formulation of this compatibility requires the notion of matched pair of
length functions, which arise naturally from the representation theory of
\( \Gamma \bowtie G \) as described in \S~\ref{sec:d0a3089327fd8687}. Again, the
following results were first obtained in \cite{MR4345210} (see also
\cite{phdthesis}*{Chapter I} for a more polished treatment).

\begin{defi}
  \label{defi:46e581e887d239e1}
  Let \( l_{\Gamma} \) be a length function on \( \Gamma \),
  \( l_{\widehat{G}} \) a length function on \( \widehat{G} \). The pair
  \( \pair*{l_{\Gamma}}{l_{\widehat{G}}} \) is said to be matched, if there are
  a family of maps
  \begin{displaymath}
    \set*{l_{\mathscr{O}} : \operatorname{Irr}_{\mathscr{O}}(G) \to
      \mathbb{R}_{\geq0}\given \mathscr{O} \in \operatorname{Orb}_{\beta}}
  \end{displaymath}
  indexed by \( \operatorname{Orb}_{\beta} \), such that the following
  conditions are satisfied:
  \begin{itemize}
  \item     \( l_{\set*{e_{\Gamma}}}([\varepsilon_{G}]) = 0 \);
  \item for all \( \mathscr{O} \in \operatorname{Orb}_{\beta} \) and
    \( [U] \in \operatorname{Irr}_{\mathscr{O}}(G) \), we have
    \( l_{\mathscr{O}}([U]) = l_{\mathscr{O}^{-1}}([U^{\dag}]) \);
  \item for \( i = 1, 2, 3 \), let
    \( \mathscr{O}_{i} \in \operatorname{Orb}_{\beta} \), and
    \( [U_{i}] \in \operatorname{Irr}_{\mathscr{O}_{i}}(G) \), with
    \( U_{i} = \sum_{r,s \in \mathscr{O}_{i}} e_{r,s} \otimes u^{(i)}_{r,s} \)
    being an \( \mathscr{O}_{i} \)\nobreakdash-irreducible
    \( \mathscr{O}_{i} \)\nobreakdash-representation of \( G \) on
    \( \ell^{2}(\mathscr{O}_{i}) \otimes \mathscr{H}_{i} \), if
    \begin{displaymath}
      \dim \operatorname{Mor}_{G_{\gamma}}\left(u^{(3)}_{\gamma,\gamma}
        \vert_{G_{\gamma}}, U_{1} \times_{\gamma} U_{2}\right) \neq 0
    \end{displaymath}
    for some (hence for all, by Theorem~\ref{theo:c8f451b8443e316a})
    \( \gamma \in \mathscr{O}_{3} \), then
    \begin{displaymath}
      l_{\mathscr{O}_{3}}([U_{3}]) \leq l_{\mathscr{O}_{1}}([U_{1}]) +
      l_{\mathscr{O}_{2}}([U_{2}]);
    \end{displaymath}
  \item for all
    \( [U] \in \operatorname{Irr}_{\set*{e_{\Gamma}}}(G) = \operatorname{Irr}(G)
    \), we have \( l_{\widehat{G}}([U]) = l_{\set*{e_{\Gamma}}}([U]) \) ;
  \item for all \( \mathscr{O} \in \operatorname{Orb}_{\beta} \), the image
    \( l_{\Gamma}(\mathscr{O}) \) is the singleton
    \( l_{\mathscr{O}}([\varepsilon_{\mathscr{O}}]) \), where
    \( \varepsilon_{\mathscr{O}}:= \sum_{r,s \in \mathscr{O}}e_{r,s} \otimes
    v_{r,s} \) is the trivial \( \mathscr{O} \)-representation (in particular,
    \( l_{\Gamma} \) is \( \beta \)-invariant).
  \end{itemize}

  If this is the case, we say that the family
  \( \set*{l_{\mathscr{O}} \given \mathscr{O} \in \operatorname{Orb}_{\beta}} \)
  is \textbf{affording} for the matched pair
  \( \pair*{l_{\Gamma}}{l_{\widehat{G}}} \).
\end{defi}

\begin{theo}[Permanence of polynomial growth]
  \label{theo:ad139c52be8424d9}
  The following are equivalent:
  \begin{enumerate}[label=\textup{(\roman*)}]
  \item \( \widehat{\Gamma \bowtie G} \) has polynomial growth;
  \item there exists a matched pair of length functions
    \( \pair*{l_{\widehat{G}}}{l_{\Gamma}} \), such that both
    \( \pair*{\widehat{G}}{l_{\widehat{G}}} \) and
    \( \pair*{\Gamma}{l_{\Gamma}} \) have polynomial growth.
  \end{enumerate}
\end{theo}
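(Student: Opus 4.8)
The plan is to transport everything through the classification of Theorem~\ref{theo:72e7adf90e3ce2dc}, which identifies \( \operatorname{Irr}(\Gamma \bowtie G) \) with \( \coprod_{\mathscr{O} \in \operatorname{Orb}_{\beta}} \operatorname{Irr}_{\mathscr{O}}(G) \), together with Proposition~\ref{prop:d1bbc7a1a0561f1e}, which identifies \( \operatorname{Irr}_{\mathscr{O}}(G) \) with \( \operatorname{Irr}(G_{\gamma}) \) for a chosen \( \gamma \in \mathscr{O} \). Under these identifications a class coming from \( [u] \in \operatorname{Irr}(G_{\gamma}) \) yields an irreducible \( W = \mathfrak{R}_{\mathscr{O}}(U) \) with \( \dim W = \lvert \mathscr{O} \rvert \dim u = [G : G_{\gamma}] \dim u \), the last equality by orbit--stabilizer. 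Both implications rest on this bookkeeping, on the fusion rules (Theorem~\ref{theo:c8f451b8443e316a}), and on the conjugation formula (Theorem~\ref{theo:82a7cf677459fdd2}). A computation that I will use repeatedly is that, for \( \pi \in \operatorname{Irr}(G) \) viewed over the trivial orbit \( \set*{e_{\Gamma}} \), the twisted tensor product \( \pi \times_{\gamma} \varepsilon_{\mathscr{O}} \) is exactly the \( G_{\gamma} \)-representation \( g \mapsto \pi(\alpha_{\gamma}(g)) \), which I abbreviate \( \operatorname{Res}_{G_{\gamma}}(\pi \circ \alpha_{\gamma}) \).

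For \eqref{item:c24cd688ec1da3ff}\( \Rightarrow \)\eqref{item:ac508fcd0e5a1c73}-type reasoning (the implication (i)\(\Rightarrow\)(ii)): given a length function \( l \) on \( \widehat{\Gamma \bowtie G} \) of polynomial growth, I would set \( l_{\mathscr{O}}([U]) := l([\mathfrak{R}_{\mathscr{O}}(U)]) \), then define \( l_{\Gamma}(\mathscr{O}) := l_{\mathscr{O}}([\varepsilon_{\mathscr{O}}]) \) (constant on \( \beta \)-orbits, hence \( \beta \)-invariant) and \( l_{\widehat{G}} := l_{\set*{e_{\Gamma}}} \) on \( \operatorname{Irr}(G_{e_{\Gamma}}) = \operatorname{Irr}(G) \). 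The five axioms of Definition~\ref{defi:46e581e887d239e1} then follow mechanically: normalization and the involution identity come from the corresponding properties of \( l \) through Theorem~\ref{theo:82a7cf677459fdd2}, and the subadditivity axiom is precisely the translation of \( l(z) \leq l(x) + l(y) \) via the fusion rules. Polynomial growth of each piece is automatic, since their growth functions are sub-sums of that of \( \widehat{\Gamma \bowtie G} \): the representations \( \mathfrak{R}_{\set*{e_{\Gamma}}}(U) \) have the same dimension and length as the underlying \( \pi \in \operatorname{Irr}(G) \); and \( \mathfrak{R}_{\mathscr{O}}(\varepsilon_{\mathscr{O}}) \) has dimension \( \lvert \mathscr{O} \rvert \) and length \( l_{\Gamma}(\mathscr{O}) \), so the \( \Gamma \)-counting function obeys \( \sum_{\mathscr{O} : l_{\Gamma}(\mathscr{O}) \in [k, k+1)} \lvert \mathscr{O} \rvert \leq \sum_{\mathscr{O} : \dots} \lvert \mathscr{O} \rvert^{2} \), again a sub-sum of the total growth.

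For the converse (ii)\(\Rightarrow\)(i): with an affording family \( \set*{l_{\mathscr{O}}} \) in hand, I would define \( l([\mathfrak{R}_{\mathscr{O}}(U)]) := l_{\mathscr{O}}([U]) \); the three length-function axioms are exactly the first three bullets of Definition~\ref{defi:46e581e887d239e1} read through \( \mathfrak{R} \). The substance is the growth estimate. Writing the growth function as \( \sum_{\mathscr{O}} \sum_{u :\, l_{\mathscr{O}}([U]) \in [k, k+1)} [G : G_{\gamma}]^{2} (\dim u)^{2} \), I would compare \( l_{\mathscr{O}}([U]) \) with \( l_{\Gamma}(\mathscr{O}) + d_{\mathscr{O}}(u) \), where \( d_{\mathscr{O}}(u) := \inf\set*{ l_{\widehat{G}}([\pi]) \given u \subseteq \operatorname{Res}_{G_{\gamma}}(\pi \circ \alpha_{\gamma}) } \) over \( \pi \in \operatorname{Irr}(G) \). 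Partitioning the index set by \( l_{\Gamma}(\mathscr{O}) \in [a, a+1) \) and \( d_{\mathscr{O}}(u) \in [b, b+1) \) reduces the bound to a product of the two given polynomial-growth estimates, via the two elementary inequalities \( \sum_{\mathscr{O} : l_{\Gamma}(\mathscr{O}) \in [a, a+1)} \lvert \mathscr{O} \rvert^{2} \leq \bigl( \sum_{\mathscr{O} : \dots} \lvert \mathscr{O} \rvert \bigr)^{2} \) and \( \sum_{u \subseteq \operatorname{Res}_{G_{\gamma}}(\pi \circ \alpha_{\gamma})} (\dim u)^{2} \leq (\dim \pi)^{2} \), the latter from \( \sum_{u} \dim u \leq \dim \operatorname{Res}_{G_{\gamma}}(\pi \circ \alpha_{\gamma}) = \dim \pi \); as the number of admissible pairs \( (a, b) \) is only \( O(k^{2}) \), the resulting bound is polynomial in \( k \).

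The hard part will be the length comparison underlying this partition, namely the lower bounds \( l_{\Gamma}(\mathscr{O}) \leq C\, l_{\mathscr{O}}([U]) \) and \( d_{\mathscr{O}}(u) \leq C\, l_{\mathscr{O}}([U]) \) with \( C \) independent of \( \mathscr{O} \) and \( u \); these are what force \( l_{\Gamma}(\mathscr{O}) + d_{\mathscr{O}}(u) \leq C'(k+1) \) once \( l_{\mathscr{O}}([U]) \in [k, k+1) \), confining the sum to \( O(k^{2}) \) pairs. I expect to obtain them by producing explicit intertwiners from the fusion rules: tensoring \( W = \mathfrak{R}_{\mathscr{O}}(U) \) with its conjugate \( W^{\dag} \) lands in \( \mathscr{O} \mathscr{O}^{-1} \ni e_{\Gamma} \), and Frobenius reciprocity inside \( G_{\gamma} \), compatibly with the \( \alpha_{\gamma} \)-twist, should exhibit a \( \pi \in \operatorname{Irr}(G) \) over the trivial orbit with \( u \subseteq \operatorname{Res}_{G_{\gamma}}(\pi \circ \alpha_{\gamma}) \) and \( l_{\widehat{G}}([\pi]) \leq 2\, l_{\mathscr{O}}([U]) \); feeding this \( \pi \) into the inclusion \( \varepsilon_{\mathscr{O}} \subseteq W \times \mathfrak{R}_{\set*{e_{\Gamma}}}(\cdot) \) and applying subadditivity then controls \( l_{\Gamma}(\mathscr{O}) \) as well. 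Verifying that the relevant twisted tensor products \( U_{1} \times_{\gamma} U_{2} \) genuinely contain the required \( G_{\gamma} \)-constituents — in essence a twist-compatible Frobenius reciprocity, building on the identity \( \pi \times_{\gamma} \varepsilon_{\mathscr{O}} = \operatorname{Res}_{G_{\gamma}}(\pi \circ \alpha_{\gamma}) \) — is the one genuinely delicate computation, and it is where the whole argument concentrates.
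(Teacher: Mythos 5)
Your direction (i)\(\Rightarrow\)(ii) is essentially right (push the given length function through \( \mathfrak{R} \), verify the five bullets of Definition~\ref{defi:46e581e887d239e1}, and note that the two growth functions are sub-sums of the total one). Note also that the present paper never proves this theorem --- it is imported verbatim from \cite{MR4345210} --- so I am measuring your plan against what any correct argument must contain. The genuine gap is in (ii)\(\Rightarrow\)(i), and it is not the ``delicate computation'' you anticipate but an actual falsehood: the two lower bounds your partition rests on, \( l_{\Gamma}(\mathscr{O}) \leq C\, l_{\mathscr{O}}([U]) \) and \( d_{\mathscr{O}}(u) \leq C\, l_{\mathscr{O}}([U]) \), do not follow from the axioms, and consequently the length function \( l([\mathfrak{R}_{\mathscr{O}}(U)]) := l_{\mathscr{O}}([U]) \) induced by the affording family need not have polynomial growth at all. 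Take \( \Gamma = \mathbb{Z} \), \( G = \mathbb{T} \), both actions trivial, so \( \Gamma \bowtie G = \mathbb{T}^{2} \), all orbits are singletons, \( G_{\gamma} = \mathbb{T} \), and \( \operatorname{Irr}(\Gamma \bowtie G) = \mathbb{Z}^{2} \) with fusion given by addition. The family \( l_{\{m\}}(n) := \lvert m - n\rvert \) satisfies all five bullets and affords the matched pair \( l_{\Gamma}(m) = \lvert m \rvert \), \( l_{\widehat{G}}(n) = \lvert n \rvert \), both of polynomial growth; yet the induced \( l(m,n) = \lvert m - n\rvert \) has the infinite level set \( \{(m,m) : m \in \mathbb{Z}\} \) at \( k = 0 \), and at \( (N,N) \) one has \( l_{\mathscr{O}}([U]) = 0 \) while \( l_{\Gamma}(\mathscr{O}) = d_{\mathscr{O}}(u) = N \). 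The same example kills your proposed mechanism for producing the bounds: here \( W \times W^{\dag} \) is simply the trivial representation, so it exhibits no \( \pi \) containing \( u \); in general the trivial-orbit part of \( W \times W^{\dag} \) is governed by \( u \otimes \overline{u} \) (in the classical picture it is \( \operatorname{Ind}_{G_{\gamma}}^{G}(u \otimes \overline{u}) \)), not by \( u \), so Frobenius reciprocity applied to it cannot give a \( \pi \) with \( u \subseteq (\pi \circ \alpha_{\gamma})\vert_{G_{\gamma}} \) and \( l_{\widehat{G}}([\pi]) \leq 2\, l_{\mathscr{O}}([U]) \).

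The repair, and what a correct proof must do differently: since (i) only asserts the existence of \emph{some} polynomially growing length function, one must abandon the induced \( l \) and use, e.g., \( \tilde{l}([\mathfrak{R}_{\mathscr{O}}(U)]) := l_{\mathscr{O}}([U]) + l_{\Gamma}(\mathscr{O}) \). This is again a length function: conjugation invariance comes from the second bullet together with symmetry and \( \beta \)-invariance of \( l_{\Gamma} \), while subadditivity combines the third bullet with part (i) of Theorem~\ref{theo:c8f451b8443e316a} (if \( W_{3} \subseteq W_{1} \times W_{2} \) then \( \mathscr{O}_{3} \subseteq \mathscr{O}_{1}\mathscr{O}_{2} \), whence \( l_{\Gamma}(\mathscr{O}_{3}) \leq l_{\Gamma}(\mathscr{O}_{1}) + l_{\Gamma}(\mathscr{O}_{2}) \)). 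On a level set of \( \tilde{l} \) one now gets \( l_{\Gamma}(\mathscr{O}) < k+1 \) for free, and your twist-compatible Frobenius identity \( \pi \times_{\gamma} \varepsilon_{\mathscr{O}} = (\pi \circ \alpha_{\gamma})\vert_{G_{\gamma}} \) --- which is correct and is indeed the right tool --- must be applied with the \emph{conjugate orbit representation} as auxiliary factor rather than \( W^{\dag} \): computing \( U \times_{e_{\Gamma}} \varepsilon_{\mathscr{O}^{-1}} \simeq \operatorname{Ind}_{G_{\gamma^{-1}}}^{G}(u \circ \alpha_{\gamma^{-1}}) \) shows that \( \mathfrak{R}_{\{e_{\Gamma}\}}(\pi) \subseteq W \times \mathfrak{R}_{\mathscr{O}^{-1}}(\varepsilon_{\mathscr{O}^{-1}}) \) precisely when \( u \subseteq (\pi \circ \alpha_{\gamma})\vert_{G_{\gamma}} \), so the third bullet gives \( l_{\widehat{G}}([\pi]) \leq l_{\mathscr{O}}([U]) + l_{\Gamma}(\mathscr{O}) < 2(k+1) \) for every such \( \pi \). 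With these two facts your own counting scheme closes: every \( u \) occurring at level \( k \) is a constituent of some \( (\pi \circ \alpha_{\gamma})\vert_{G_{\gamma}} \) with \( l_{\widehat{G}}([\pi]) < 2(k+1) \), one has \( \sum_{u \subseteq \rho} (\dim u)^{2} \leq (\dim \rho)^{2} \), and \( \sum_{l_{\Gamma}(\mathscr{O}) < k+1} \lvert \mathscr{O}\rvert^{2} \leq \lvert \{\gamma \in \Gamma : l_{\Gamma}(\gamma) < k+1\}\rvert^{2} \), which together yield a polynomial bound.
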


\begin{theo}[Permanence of \RD]
  \label{theo:66c73d7c75f29067}
  The following are equivalent:
  \begin{enumerate}[label=\textup{(\roman*)}]
  \item \( \widehat{\Gamma \bowtie G} \) has \RD;
  \item there exists a matched pair of length functions
    \( \pair*{l_{\widehat{G}}}{l_{\Gamma}} \), such that
    \( \pair*{\widehat{G}}{l_{\widehat{G}}} \) has polynomial growth and
    \( \pair*{\Gamma}{l_{\Gamma}} \) has \RD;
  \item there exists a matched pair of length functions
    \( \pair*{l_{\widehat{G}}}{l_{\Gamma}} \), such that both
    \( \pair*{\widehat{G}}{l_{\widehat{G}}} \) and
    \( \pair*{\Gamma}{l_{\Gamma}} \) have \RD.
  \end{enumerate}
\end{theo}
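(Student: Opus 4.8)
The plan is to establish the three-way equivalence by first collapsing (ii) and (iii) into one another cheaply and then concentrating on the genuine content, the equivalence of (i) with (ii). The implication (ii) $\Rightarrow$ (iii) is immediate: polynomial growth of $\pair*{\widehat{G}}{l_{\widehat{G}}}$ entails its $\RD$, while the same affording family and the same $\pair*{\Gamma}{l_{\Gamma}}$ are retained. For the reverse (iii) $\Rightarrow$ (ii) I would invoke the co-amenability of the genuine compact group $G$: as recalled in \S~\ref{sec:00df8c84fb2742bf}, for co-amenable $\mathbb{H}$ the pair $\pair*{\widehat{\mathbb{H}}}{l}$ has $\RD$ if and only if it has polynomial growth, and this upgrade is for one and the same length function. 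Hence the very matched pair witnessing (iii) already witnesses (ii). This disposes of (ii) $\Leftrightarrow$ (iii) and isolates (i) $\Leftrightarrow$ (ii).

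The conceptual device underlying both remaining implications is a dictionary between length functions on $\widehat{\Gamma \bowtie G}$ and affording families. Given a length function $l$ on $\widehat{\Gamma \bowtie G}$, Theorem~\ref{theo:72e7adf90e3ce2dc} lets me set $l_{\mathscr{O}}([U]) := l\bigl([\mathfrak{R}_{\mathscr{O}}(U)]\bigr)$; the three length-function axioms translate, via Theorem~\ref{theo:82a7cf677459fdd2} for the conjugate and Theorem~\ref{theo:c8f451b8443e316a} for the fusion rules, into exactly the first three bullet points of Definition~\ref{defi:46e581e887d239e1}. Declaring $l_{\widehat{G}}$ to be the restriction of $l_{\{e_{\Gamma}\}}$ to $\operatorname{Irr}_{\{e_{\Gamma}\}}(G) = \operatorname{Irr}(G)$ and $l_{\Gamma}(\mathscr{O}) := l_{\mathscr{O}}([\varepsilon_{\mathscr{O}}])$ supplies the remaining two bullet points and makes $l_{\Gamma}$ automatically $\beta$-invariant. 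The same formula read backwards turns an affording family into a length function on $\widehat{\Gamma \bowtie G}$, so the two notions are interchangeable and (i) $\Leftrightarrow$ (ii) becomes a question about transferring $\RD$ between $\widehat{\Gamma \bowtie G}$ and its two constituents.

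For (i) $\Rightarrow$ (ii) I would argue heredity of $\RD$ along the two natural pieces. The unit-orbit representations form a full fusion subcategory closed under tensor product and conjugation (because $\{e_{\Gamma}\}\{e_{\Gamma}\} = \{e_{\Gamma}\}$ and $\{e_{\Gamma}\}^{-1} = \{e_{\Gamma}\}$), and the inclusion $C(G) \hookrightarrow \Gamma \rtimes C(G)$ is isometric on the reduced level and compatible with the Sobolev-$0$-norms; thus the $\RD$ estimate for $\widehat{\Gamma \bowtie G}$ restricts to one for $\pair*{\widehat{G}}{l_{\widehat{G}}}$, which co-amenability of $G$ then promotes to polynomial growth. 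The $\Gamma$-side is extracted from the trivial $\mathscr{O}$-representations $\varepsilon_{\mathscr{O}}$, where the combinatorics of $\beta$-orbits and the indices $[G : G_{\gamma}]$ recover the convolution structure of $\Gamma$ and let the $\RD$ inequality descend to $\pair*{\Gamma}{l_{\Gamma}}$. Conversely, for (ii) $\Rightarrow$ (i) I would feed the length function produced by the dictionary into a direct Haagerup-type estimate: each $l_{\mathscr{O}}([U])$ splits, up to the affording inequalities, into an orbit contribution governed by $l_{\Gamma}$ and a $G_{\gamma}$-representation contribution governed by $l_{\widehat{G}}$, so that polynomial growth of $\widehat{G}$ furnishes a polynomial bound on the dimensions accumulating on each $G$-sphere while the $\RD$ of $\Gamma$ furnishes the Haagerup inequality for the $\Gamma$-convolution; multiplying these two estimates yields the required control of $\norm*{a}_{C(\Gamma \bowtie G)}$ by $\norm*{a}_{\Gamma \bowtie G, 0}$. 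Much of this second half parallels the polynomial-growth permanence Theorem~\ref{theo:ad139c52be8424d9}, whose machinery I would reuse.

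The step I expect to be the main obstacle is the norm estimate in (ii) $\Rightarrow$ (i), specifically controlling the twisted tensor product $U_{1} \times_{\gamma} U_{2}$ of Theorem~\ref{theo:c8f451b8443e316a}. Its definition mixes the two factors through $\alpha_{r_{2}}(g)$ and through the orbit bookkeeping $r_{1}s_{1} = r_{2}s_{2} = \gamma$, so the product of matrix coefficients does not factorize cleanly into a $\Gamma$-part and a $G$-part. The crux is to show that, because each $\alpha_{\gamma}$ is a Haar-measure-preserving automorphism of $G$ and hence acts isometrically on the relevant $L^{2}$- and operator norms, the twist merely permutes and reweights the contributions in a manner compatible with both the polynomial dimension count on the $G$-spheres and the Haagerup inequality on $\Gamma$. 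Making this precise, and verifying that the affording inequalities keep the orbit and $G_{\gamma}$ length contributions additively separated throughout the estimate, is where the real work lies.
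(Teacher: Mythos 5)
You should first note that the paper does not prove this statement at all: it is recalled verbatim from \cite{MR4345210} (see the opening of \S~\ref{sec:7c3628ba8d3c35e5}), so the only meaningful comparison is against what a complete proof requires. Within that frame, the easy parts of your proposal are sound. The equivalence (ii) \( \Leftrightarrow \) (iii) via co-amenability of the classical compact group \( G \) and Vergnioux's same-length-function upgrade (recalled in \S~\ref{sec:00df8c84fb2742bf}) is correct, and your dictionary between length functions on \( \widehat{\Gamma \bowtie G} \) and affording families is the right mechanism. Your (i) \( \Rightarrow \) (ii) can also be completed along the lines you indicate: the \( G \)-side follows because \( C(G) \hookrightarrow \Gamma \rtimes_{r} C(G) \) is isometric and the Fourier transforms and Sobolev-\( 0 \)-norms are compatible on unit-orbit classes; the \( \Gamma \)-side requires an actual computation you only gesture at, namely sending \( f \in \mathbb{C}[\Gamma] \) to the element \( a_{f} \in c_{c}(\widehat{\Gamma \bowtie G}) \) supported on the classes \( [\mathfrak{R}_{\mathscr{O}}(\varepsilon_{\mathscr{O}})] \) whose block at such a class has constant entries \( f(r)/\abs*{\mathscr{O}} \) along each column, so that \( \mathcal{F}(a_{f}) = \sum_{\gamma} f(\gamma)u_{\gamma} \), \( \norm*{a_{f}}_{0} = \norm*{f}_{\ell^{2}} \), and \( \norm*{\sum_{\gamma} f(\gamma)u_{\gamma}}_{\Gamma \rtimes_{r} C(G)} = \norm*{\lambda(f)}_{C^{\ast}_{r}(\Gamma)} \); this is routine but absent.

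The genuine gap is (ii) \( \Rightarrow \) (i), the heart of the theorem, which you explicitly leave as ``where the real work lies'' rather than prove. Concretely, two things are missing. First, support control: from the affording axioms alone one must derive that if \( l_{\mathscr{O}}([U]) \leq k \), then \( l_{\Gamma}(\gamma) = O(k) \) for \( \gamma \in \mathscr{O} \) \emph{and} every \( y \in \operatorname{Irr}(G) \) occurring in the Fourier support of the coefficients \( u_{r,s} \) has \( l_{\widehat{G}}(y) = O(k) \). Your claim that \( l_{\mathscr{O}}([U]) \) ``splits'' into a \( \Gamma \)-part and a \( G \)-part is not an axiom and is not literally true; what is true are two-sided comparison inequalities obtained by fusing \( U \) against unit-orbit representations and against \( U^{\dag} \) and invoking the third bullet of Definition~\ref{defi:46e581e887d239e1}, and these derivations are the technical core. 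Second, the norm estimate itself: one needs an operator-valued Haagerup inequality for the crossed product, of the shape \( \norm*{\sum_{\gamma} u_{\gamma}a_{\gamma}} \leq P(k)\bigl(\sum_{\gamma}\norm*{a_{\gamma}}_{C(G)}^{2}\bigr)^{1/2} \) for sums supported on \( l_{\Gamma} \leq Ck \), which is not a formal consequence of \( \RD \) of \( \Gamma \) and must be proved before polynomial growth of \( \widehat{G} \) can convert \( \norm*{a_{\gamma}}_{C(G)} \) into an \( L^{2} \)-quantity and hence into \( \norm*{a}_{\Gamma \bowtie G, 0} \). Finally, your proposed cure for the twist --- that \( \alpha_{\gamma} \) preserves Haar measure and so acts isometrically --- addresses a non-issue: the twist is harmless for norms but dangerous for Fourier supports, because \( \alpha \) permutes \( \operatorname{Irr}(G) \) and \( l_{\widehat{G}} \) is \emph{not} assumed \( \Gamma \)-invariant in the general matched-pair setting (contrast Theorem~\ref{theo:69214233f77b0115}, where that invariance is an extra hypothesis); it is the affording axioms, not measure preservation, that must do the work there.
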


Note that since duals of compact groups are amenable, they have polynomial growth if and only if they have \RD (\cite{MR2329000}).

\begin{rema}
  \label{rema:c83c0db13b907b8b}
  When trying to construct concrete examples, the theoretical characterizations
  given in Theorems~\ref{theo:ad139c52be8424d9} and \ref{theo:66c73d7c75f29067}
  have a serious drawback: it is in general very hard to verify that the length
  functions on \( \Gamma \) and \( \widehat{G} \) that one wants to use are
  actually matched (to determine the existence of an affording family is quite
  tricky).
\end{rema}

\subsection{Representation theory of some semi-direct products}
\label{sec:cc23f784f4d89121}

For our purposes of constructing length functions on the dual, we need a good
understanding of representations of semi-direct products of the form
\( G \rtimes \Lambda \), where \( G \) is a compact (meaning quasi-compact and
Hausdorff) group and \( \Lambda \) is a finite group acting on \( G \) by
topological automorphisms. The more general results in
\cite{wang2019representations} applies here by taking the compact quantum group
\( \mathbb{G} \) studied there to be the classical compact group \( G \). In
this case, the irreducible representations can already be described by Mackey's
method \cite{MR0031489}, \cite{MR0098328}. But even in this special case, the
result on fusion rules that we will present seems fairly recent to the best of
our knowledge. We refer our reader to \cite{wang2019representations} for more
details and complete proofs. We will also freely use the Peter-Weyl theory of
projective representations of finite groups (see e.g.\ \cite{MR3299063}).

Throughout the rest of \S~\ref{sec:cc23f784f4d89121}, we fix a compact group
\( G \), a finite group \( \Lambda \), and a group homomorphism
\( \alpha : \Lambda \to \operatorname{Aut}(G) \), seen as \( \Lambda \) acting
on \( G \) via topological automorphisms, and \( G \rtimes \Lambda \) denotes
the semi-direct with respect to this action. As a topological space,
\( G \rtimes \Lambda \) is the Cartesian product \( G \times \Lambda \) where
\( \Lambda \) is equipped with the discrete topology. The multiplication on
\( G \rtimes \Lambda \) is given by \( (g, r)(h, s) = (g \alpha_{r}(h), r s) \),
which makes \( G \rtimes \Lambda \) a compact group.

\subsubsection{Classification of irreducible representations}
\label{sec:2a0c20223a216ec6}

The action \( \alpha : \Lambda \curvearrowright G \) by topological
automorphisms induces an action
\( \widetilde{\alpha} : \Lambda \curvearrowright \operatorname{Irr}(G) \) via
\( \lambda\cdot [u] := [u \circ \alpha_{\lambda}^{-1}] \), which comes from the
action of \( \Lambda \) on the (proper) class of all representations of \( G \)
by \( \lambda \cdot u := u \circ \alpha_{\lambda}^{-1} \).  For each
\( x \in \operatorname{Irr}(G) \), one can associate a (unitary) projective
representation \( V \) of the isotropy subgroup \( \Lambda_{x} \) fixing
\( x \), as follows. Take any representation \( u \in x \) on some Hilbert space
\( H \). By definition, \( \lambda \cdot u \simeq u \) if and only if
\( \lambda \in \Lambda_{x} \). Hence for any \( \lambda \in \Lambda_{x} \),
there exists a unitary operator, uniquely determined up to a constant in the
circle group \( \mathbb{T} = \set*{z \in \mathbb{C} \given \abs*{z} = 1} \),
such that \( V(\lambda) \in \operatorname{Mor}_{G}(\lambda \cdot u, u) \). As
\( \operatorname{Mor}_{G}(\lambda \cdot u, u) =
\operatorname{Mor}_{G}(\lambda\mu \cdot u, \mu \cdot u) \) for all
\( \lambda, \mu \in \Lambda_{x} \), one checks that
\( V(\lambda\mu){V(\mu)}^{\ast}{V(\lambda)}^{\ast} \in \operatorname{Mor}(u, u)
\), hence is a multiple of \( \id_{H} \) by a scalar in \( \mathbb{T} \). Thus
\( V : \Lambda_{x} \to B(H) \), \( \lambda \mapsto V(\lambda) \) is a projective
representation of \( \Lambda_{x} \) on \( H \). The projective representation
\( V \) is uniquely determined by the class \( x \in \operatorname{Irr}(G) \) in
the following sense: if \( V' \) is another projective representation of
\( \Lambda_{x} \) such that
\( V'(\lambda) \in \operatorname{Mor}(\lambda \cdot v, v) \) for some
\( v \in x \) and all \( \lambda \in \Lambda_{x} \), then for any unitary
equivalence \( U \in \operatorname{Mor}(u, v) \), there exists a unique map
\( \mathsf{b} : \Lambda_{x} \to \mathbb{T} \), such that for all
\( \lambda \in \Lambda_{x} \), we have
\( V(\lambda) = \mathsf{b}(\lambda) U^{\ast}V'(\lambda)U \). This means that the
cohomology class \( [\omega] \) in \( H^{2}(\Lambda_{x}, \mathbb{T}) \) (the
second group cohomology of \( \Lambda_{x} \) with coefficients in
\( \mathbb{T} \) as a trivial \( \Lambda_{x} \)-module) of the cocycle
\( \omega \) of the projective representation \( V \) depends only on
\( x \in \operatorname{Irr}(G) \).  Obviously, we can replace \( \Lambda_{x} \)
by any of its subgroup in the above discussion.

\begin{nota}
  \label{nota:e382ab22fd59faf5}
  By \( \mathcal{G}_{\mathrm{iso}}(\Lambda) \), we mean the set of finite
  intersections of the isotropy subgroups \( \Lambda_{x} \),
  \( x \in \operatorname{Irr}(G) \).
\end{nota}

\begin{defi}
  \label{defi:82f41374ee1cee2e}
  Take any \( \Lambda_{0} \in \mathcal{G}_{\mathrm{iso}} \), a
  \textbf{generalized representation parameter} (abbreviated as \textbf{GRP}
  later) associated with \( \Lambda_{0} \) (for the semidirect product
  \( G \rtimes \Lambda \)) is a triple \( (u, V, v) \), where \( u \) is a
  representation of \( G \) on some finite dimensional Hilbert space \( H \),
  \( V \) is a projective representation of \( \Lambda_{0} \) on the same
  \( H \) with \( V(\lambda) \in \operatorname{Mor}_{G}(\lambda \cdot u, u) \)
  for each \( \lambda \in \Lambda_{0} \), and \( v \) is a projective
  representation of \( \Lambda_{0} \) on a possibly different finite dimensional
  Hilbert space \( K \), such that the cocycle of \( v \) is the opposite of
  that of \( V \). The GRP \( (u, V, v) \) is called a \textbf{representation
    parameter} (abbreviated as \textbf{RP} later) if \( u \) is irreducible, and
  if in addition, \( \Lambda_{0} = \Lambda_{[u]} \), we say that \( (u, V, v) \)
  is a \textbf{distinguished representation parameter} (abbreviated as
  \textbf{DRP} in the following).
\end{defi}

Given a RP \( (u, V, v) \) associated with \( \Lambda_{0} \) as in the
definition above, one can associate an irreducible representation
\( \mathscr{R}_{\Lambda_{0}}(u, V, v) \) of \( G \rtimes \Lambda_{0} \) (which
is a subgroup of \( G \rtimes \Lambda \) in which we are interested), called
\textbf{the representation of} \( G \rtimes \Lambda_{0} \) \textbf{parameterized
  by} \( (u, V, v) \), as follows: the carrier space of this representation is
\( H \otimes K \), and as a map from \( G \rtimes \Lambda_{0} \) to
\( \mathcal{U}(H \otimes K) \), the representation
\( \mathscr{R}_{\Lambda_{0}}(u, V, v) \) sends
\( (g, \lambda) \in G \rtimes \Lambda_{0} \) to
\( u(g)V(\lambda) \otimes v(\lambda) \). Now the induced representation
\( \mathscr{R}(u, V, v):= \operatorname{Ind}_{G \rtimes \Lambda_{0}}^{G \rtimes
  \Lambda}\bigl(\mathscr{R}_{\Lambda_{0}}(u, V, v)\bigr) \) is called
\textbf{the representation of} \( G \rtimes \Lambda \) \textbf{parameterized by}
\( (u, V, v) \). If \( (u, V, v) \) is distinguished, i.e.\ if \( (u, V, v) \)
is a DRP, then \( \mathscr{R}(u, V, v) \) is irreducible.

Fix a \( \Lambda_{0} \in \mathcal{G}_{\mathrm{iso}} \), we say two DRP
associated with \( \Lambda_{0} \), namely \( (u_{1},V_{1},v_{1}) \) and
\( (u_{2}, V_{2}, v_{2}) \), are \textbf{equivalent}, if
\( \mathscr{R}_{\Lambda_{0}}(u_{1}, V_{1}, v_{1}) \) and
\( \mathscr{R}_{\Lambda_{0}}(u_{2}, V_{2}, v_{2}) \) are equivalent. This
equivalence relation can be characterized more concretely as the equivalence of
the following conditions:
\begin{enumerate}[label=\textup{(\alph*)}]
\item \( (u_{1},V_{1},v_{1}) \) and \( (u_{2},V_{2},v_{2}) \) are equivalent
  DRPs;
\item there exists a mapping \( \mathsf{b}: \Lambda_{0} \to \mathbb{T} \) such
  that the projective representations \( \mathsf{b}V_{1} \) and \( V_{2} \)
  share the same cocycle, and both
  \( \operatorname{Mor}_{G}(u_{1},u_{2}) \cap
  \operatorname{Mor}_{\Lambda_{0}}(\mathsf{b}V_{1}, V_{2}) \) and
  \( \operatorname{Mor}_{\Lambda_{0}}(v_{1}, v_{2}) \) are non-zero;
\item there exists a mapping \( \mathsf{b}: \Lambda_{0} \to \mathbb{T} \) such
  that the projective representations \( \mathsf{b}V_{1} \) and \( V_{2} \)
  share the same cocycle, and both
  \( \operatorname{Mor}_{G}(u_{1},u_{2}) \cap
  \operatorname{Mor}_{\Lambda_{0}}(\mathsf{b}V_{1}, V_{2}) \) and
  \( \operatorname{Mor}_{\Lambda_{0}}(v_{1}, v_{2}) \) contain unitary
  operators.
\end{enumerate}

\begin{nota}
  \label{nota:d4dd198b99ae5dc6}
  For each \( \Lambda_{0} \in \mathcal{G}_{\mathrm{iso}} \), we denote the set
  of equivalence classes of DRPs associated with \( \Lambda_{0} \) by
  \( \mathfrak{D}_{\Lambda_{0}} \), and we denote by \( \mathfrak{D} \) the
  union of \( \mathfrak{D}_{\Lambda_{x}} \), \( x \in \operatorname{Irr}(G) \).
\end{nota}

Given a GRP (resp.\ RP, resp.\ DRP) \( \mathscr{D}:= (u, V, v) \), the
componentwise contragredient \( (u^{c}, V^{c}, v^{c}) \) is still a GRP (resp.\
RP, resp.\ DRP), called the \textbf{contragredient} of \( \mathscr{D} \), and is
denoted by \( \mathscr{D}^{c} \). Moreover, for each \( r \in \Lambda \), denote
by \( \operatorname{Ad}_{r} \) the inner automorphisms \( x \mapsto rxr^{-1} \)
of \( \Lambda \), then the triple
\begin{displaymath}
  r \cdot \mathscr{D} := (r \cdot u, r \cdot V, r \cdot v)
  = (u \circ \alpha_{r}^{-1}, V \circ \operatorname{Ad}_{r}^{-1}\vert_{r\Lambda_{0}r^{-1}}, v \circ \operatorname{Ad}_{r}^{-1}\vert_{r\Lambda_{0}r^{-1}})
\end{displaymath}
is a GRP (resp.\ RP, resp.\ DRP) associated with the subgroup
\( r\Lambda_{0}r^{-1} \in \mathcal{G}_{\mathrm{iso}} \). One checks easily that
\( r \cdot [\mathscr{D}]:= [r \cdot \mathscr{D}] \) gives an action of
\( \Lambda \) on \( \mathscr{D} \), and this action preserves contragredients.

\begin{theo}[Classification of irreducible representation of
  \( G \rtimes \Lambda \)]
  \label{theo:f1781e3972950519}
  The mapping
  \begin{align*}
    \Psi: \mathfrak{D} &\to \operatorname{Irr}(G \rtimes \Lambda) \\
    \mathscr{D}:= [(u, V, v)] \in \mathfrak{D}_{\Lambda_{0}}%
                       & \mapsto \Psi_{\Lambda_{0}}(\mathscr{D}):= [\mathscr{R}(u, V, v)]
  \end{align*}
  is a well-defined surjection whose fibers are exactly the orbits of the action
  \( \Lambda \curvearrowright \mathfrak{D} \). Moreover,
  \( \Psi_{\Lambda_{0}} \), hence in particular \( \Psi \), preserves
  contragredients.
\end{theo}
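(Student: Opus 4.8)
The plan is to run Mackey's little-group machine; since \( \Lambda \) is finite and \( G \) is compact, every induction is finite-dimensional and the imprimitivity theorem reduces to elementary linear algebra, so the only genuinely new bookkeeping is that of the projective data \( (V, v) \) and its cocycle.

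First I would prove a structure result for a representation \( \sigma \) of \( G \rtimes \Lambda_{0} \) (with \( \Lambda_{0} \in \mathcal{G}_{\mathrm{iso}} \)) whose restriction \( \sigma\vert_{G} \) is isotypic of type \( x = [u] \). Writing the carrier space as \( H \otimes K \) with \( \sigma(g, e) = u(g) \otimes \operatorname{id}_{K} \), the conjugation relation \( \sigma(e, \lambda)\sigma(g, e)\sigma(e, \lambda)^{-1} = u(\alpha_{\lambda}(g)) \otimes \operatorname{id}_{K} \) together with \( V(\lambda)u(\alpha_{\lambda}^{-1}(g))V(\lambda)^{-1} = u(g) \) shows that \( (V(\lambda)^{-1} \otimes \operatorname{id}_{K})\sigma(e, \lambda) \) commutes with every \( u(g) \otimes \operatorname{id}_{K} \). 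By Schur's lemma the commutant of \( u(G) \otimes \operatorname{id}_{K} \) is \( \operatorname{id}_{H} \otimes B(K) \), so \( \sigma(e, \lambda) = V(\lambda) \otimes v(\lambda) \) for a unique \( v(\lambda) \in B(K) \); imposing that \( \sigma \) be multiplicative then forces \( v \) to be a projective representation whose cocycle is the opposite of that of \( V \), and \( \sigma = \mathscr{R}_{\Lambda_{0}}(u, V, v) \). A second application of Schur's lemma identifies the commutant of \( \sigma \) with that of \( v \), so \( \sigma \) is irreducible exactly when \( v \) is.

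Next I would establish surjectivity. For \( \pi \in \operatorname{Irr}(G \rtimes \Lambda) \), Peter-Weyl decomposes \( \pi\vert_{G} \) into \( G \)-isotypic blocks, which \( G \rtimes \Lambda \) permutes via \( \Lambda \curvearrowright \operatorname{Irr}(G) \); irreducibility makes this a single transitive orbit. Fixing a type \( x \) in it, the \( x \)-isotypic block \( \mathcal{H}_{x} \) is \( (G \rtimes \Lambda_{x}) \)-invariant and the isotypic decomposition is a system of imprimitivity, so \( \pi \simeq \operatorname{Ind}_{G \rtimes \Lambda_{x}}^{G \rtimes \Lambda}(\pi\vert_{\mathcal{H}_{x}}) \); a dimension count (a proper \( (G \rtimes \Lambda_{x}) \)-subrepresentation of \( \pi\vert_{\mathcal{H}_{x}} \) would induce to a proper subrepresentation of \( \pi \)) shows \( \pi\vert_{\mathcal{H}_{x}} \) is irreducible. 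The structure result presents it as \( \mathscr{R}_{\Lambda_{x}}(u, V, v) \) with \( u \in x \), \( \Lambda_{x} = \Lambda_{[u]} \) and \( v \) irreducible, i.e.\ as a DRP, whence \( \pi = \Psi([(u, V, v)]) \). Well-definedness of \( \Psi_{\Lambda_{0}} \) is immediate from the definition of equivalence of DRPs and the functoriality of induction.

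Finally I would compute the fibers and check contragredients. One inclusion is the standard fact that conjugating the inducing subgroup and representation by \( r \in \Lambda \) leaves the induced representation unchanged, giving \( \Psi(r \cdot \mathscr{D}) = \Psi(\mathscr{D}) \). Conversely, if \( \Psi(\mathscr{D}_{1}) = \Psi(\mathscr{D}_{2}) \), comparing the supports of the two restrictions to \( G \) forces the underlying types into one \( \Lambda \)-orbit; after replacing \( \mathscr{D}_{1} \) by a suitable \( r \cdot \mathscr{D}_{1} \) we may assume both sit over the same \( x \), and recovering the \( x \)-isotypic blocks of the equivalent induced representations yields \( \mathscr{R}_{\Lambda_{x}}(u_{1}, V_{1}, v_{1}) \simeq \mathscr{R}_{\Lambda_{x}}(u_{2}, V_{2}, v_{2}) \), i.e.\ \( \mathscr{D}_{1} \) and \( \mathscr{D}_{2} \) are equivalent. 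The contragredient claim follows because the componentwise contragredient turns \( \mathscr{R}_{\Lambda_{0}}(u, V, v) \) into its conjugate and induction commutes with conjugation. The main obstacle throughout is the projective bookkeeping: \( V \) is only defined up to a \( \mathbb{T} \)-valued scalar, so every intertwiner and cocycle must be tracked modulo this ambiguity — exactly what the phase \( \mathsf{b} \) in the definition of DRP-equivalence absorbs — and arranging the opposite-cocycle condition on \( v \) to line up precisely is the delicate point.
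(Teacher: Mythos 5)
Your argument is correct, but note that the paper itself offers no proof of Theorem~\ref{theo:f1781e3972950519} to compare against: the result is quoted as background from \cite{wang2019representations}, where it is established for semidirect products of a compact \emph{quantum} group by a finite group. In that generality there is no underlying space whose isotypic subspaces get permuted, so the cited proof must proceed through Peter--Weyl/categorical arguments rather than the little-group analysis you run; for the classical compact group \( G \) considered here, the paper explicitly remarks that Mackey's method suffices, and that is exactly your proof. Your steps --- the Schur-lemma structure theorem \( \sigma(g,\lambda) = u(g)V(\lambda) \otimes v(\lambda) \) for representations of \( G \rtimes \Lambda_{0} \) with isotypic restriction to \( G \), transitivity of \( \Lambda \) on the isotypic blocks of an irreducible forcing \( \pi \simeq \operatorname{Ind}_{G \rtimes \Lambda_{x}}^{G \rtimes \Lambda}(\pi\vert_{\mathcal{H}_{x}}) \), the conjugation/restriction argument identifying the fibers with \( \Lambda \)-orbits, and compatibility of induction with contragredients --- are all sound, and the projective-cocycle bookkeeping (the phase \( \mathsf{b} \), the opposite-cocycle condition on \( v \)) matches the paper's definition of DRP-equivalence. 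What your route buys is a self-contained elementary proof in the classical case; what the paper's citation buys is the quantum generality, which this particular paper never needs since its \( G \) is always classical.

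One step you should still nail down: well-definedness of \( \Psi \) requires not only independence of the representative (which you address) but also that \( \mathscr{R}(u,V,v) \) is \emph{irreducible} for every DRP \( (u,V,v) \); your surjectivity argument only shows that every irreducible arises this way. The fix is one more application of your block decomposition: a nonzero \( (G \rtimes \Lambda) \)-invariant subspace \( W \) of the induced representation is in particular \( G \)-invariant, hence is the direct sum of its intersections with the \( G \)-isotypic blocks; since \( \Lambda \) permutes these blocks transitively and \( W \) is invariant, \( W \cap (H \otimes K) \neq 0 \); this intersection is \( (G \rtimes \Lambda_{x}) \)-invariant, so by irreducibility of \( \mathscr{R}_{\Lambda_{x}}(u,V,v) \) --- equivalent, by your first step, to irreducibility of \( v \) --- it is all of \( H \otimes K \), whence \( W \) is the whole space. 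Note this also shows that the definition of DRP must be read, as you implicitly do, with \( v \) irreducible; otherwise the statement as given would fail.
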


\subsubsection{The fusion rules}
\label{sec:ab718acd8968aed8}

We give a description of the fusion rules for \( G \rtimes \Lambda \) with the
help of a seemingly nontrivial reduction process.

For \( i = 1, 2, 3 \), consider an irreducible representation \( W_{i} \) of
\( G \rtimes \Lambda \) parameterized by some DRP
\( \mathscr{D}_{i}:= (u_{i}, V_{i}, v_{i}) \) associated with the isotropy
subgroup \( \Lambda_{i}:= \Lambda_{[u_{i}]} \). By
Theorem~\ref{theo:f1781e3972950519}, one only needs to find a more or less
explicit formula for
\begin{displaymath}
  \dim \operatorname{Mor}_{G \rtimes \Lambda}(W_{1}, W_{2} \times W_{3})
\end{displaymath}
in order to obtain a description of the desired fusion rules.

We now describe the reduction procedure mentioned above. One checks easily that
\( (u_{2} \times u_{3}, V_{2} \times V_{3}, v_{2} \times v_{3}) \) is a GRP
associated with the subgroup
\( \Lambda_{0}:= \cap_{i=1}^{3}\Lambda_{i} \in \mathcal{G}_{\mathrm{iso}} \). To
fix the notations, let \( H_{i} \) (resp.\ \( K_{i} \)) be the carrier space of
\( u_{i} \) (resp.\ \( v_{i} \)). Then there exists a unique subrepresentation
\( u_{p} \) of \( u_{2} \times u_{3} \) determined by a unique subspace
\( H_{p} \) of \( H_{1} \otimes H_{2} \), such that \( u_{p} \) is maximal among
the subrepresentations of \( u_{2} \times u_{3} \) that are equivalent to
multiples of \( u_{1} \), and we denote by \( n \) the multiplicity
\( \dim \operatorname{Mor}_{G}(u_{1}, u_{p}) \) of \( u_{1} \) in \( u_{p} \)
(which of course could be \( 0 \)).

\begin{lemm}
  \label{lemm:ac666372e8d2b33a}
  The following hold.
  \begin{enumerate}[label=\textup{(\arabic*)}]
  \item The subspace \( H_{p} \) is invariant under \( V \), and determines a
    projective subrepresentation \( V_{p} \) of \( V \), such that
    \( V_{p}(\lambda) \in \operatorname{Mor}(\lambda \cdot u_{p}, u_{p}) \) for
    all \( \lambda \in \Lambda_{0} \).
  \item There exists a unique projective representation \( V'_{p} \) of
    \( \mathbb{C}^{n} \), such that \( V_{p} \) is equivalent to
    \( V_{1} \times V'_{p} \) and the projective representations
    \( V'_{p} \times v_{2} \times v_{3} \) and \( v_{1} \) of \( \Lambda_{0} \)
    share the same cocycle.
  \end{enumerate}
\end{lemm}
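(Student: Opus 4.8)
The plan is to drive everything from the single intertwining property of the projective representation $V := V_{2} \times V_{3}$ of $\Lambda_{0}$ on $H_{2} \otimes H_{3}$, namely that $V(\lambda) \in \operatorname{Mor}_{G}\bigl(\lambda \cdot (u_{2} \times u_{3}), u_{2} \times u_{3}\bigr)$ for every $\lambda \in \Lambda_{0}$, together with the hypothesis $\Lambda_{0} \subseteq \Lambda_{1} = \Lambda_{[u_{1}]}$. For \eqref{item:c24cd688ec1da3ff} write $w := u_{2} \times u_{3}$ and first observe that a subspace $N \subseteq H_{2} \otimes H_{3}$ is $w$-invariant if and only if it is $(\lambda \cdot w)$-invariant, since $(\lambda \cdot w)(g) = w(\alpha_{\lambda}^{-1}(g))$ and $\alpha_{\lambda}^{-1}$ is a bijection of $G$; moreover on such an $N$ one has $(\lambda \cdot w)\vert_{N} = \lambda \cdot (w\vert_{N})$. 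From this I would deduce that the $u_{1}$-isotypic subspace of $\lambda \cdot w$ coincides, \emph{as a subspace of} $H_{2} \otimes H_{3}$, with that of $w$: if $N$ is an irreducible $w$-subrepresentation with $w\vert_{N} \simeq u_{1}$, then $N$ is an irreducible $(\lambda \cdot w)$-subrepresentation with $(\lambda \cdot w)\vert_{N} \simeq \lambda \cdot u_{1} \simeq u_{1}$, the last equivalence precisely because $\lambda \in \Lambda_{[u_{1}]}$, and conversely. Since both isotypic subspaces then equal $H_{p}$, and $V(\lambda)$ is a $G$-module isomorphism $\lambda \cdot w \to w$, it must carry the $u_{1}$-isotypic subspace of its source onto that of its target, whence $V(\lambda)(H_{p}) = H_{p}$. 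Thus $H_{p}$ is $V$-invariant; setting $V_{p} := V\vert_{H_{p}}$ and restricting the intertwining relation to the invariant subspace $H_{p}$ yields $V_{p}(\lambda) \in \operatorname{Mor}_{G}(\lambda \cdot u_{p}, u_{p})$.

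For \eqref{item:ac508fcd0e5a1c73} I would fix a $G$-isomorphism identifying $H_{p}$ with $H_{1} \otimes \mathbb{C}^{n}$ under which $u_{p}$ becomes $u_{1} \times \id_{\mathbb{C}^{n}}$, using $u_{p} \simeq u_{1}^{\oplus n}$ and the multiplicity space $\operatorname{Mor}_{G}(u_{1}, u_{p}) \cong \mathbb{C}^{n}$. Under this identification $\lambda \cdot u_{p}$ becomes $(\lambda \cdot u_{1}) \times \id_{\mathbb{C}^{n}}$, and the standard description of intertwiners gives
\[
  \operatorname{Mor}_{G}\bigl((\lambda \cdot u_{1}) \times \id_{\mathbb{C}^{n}},\, u_{1} \times \id_{\mathbb{C}^{n}}\bigr)
  = \operatorname{Mor}_{G}(\lambda \cdot u_{1}, u_{1}) \otimes B(\mathbb{C}^{n}).
\]
As $u_{1}$ is irreducible and $\lambda \in \Lambda_{[u_{1}]}$, the first factor is the line $\mathbb{C}\, V_{1}(\lambda)$; hence $V_{p}(\lambda)$ factors \emph{uniquely} as $V_{1}(\lambda) \otimes V'_{p}(\lambda)$ with $V'_{p}(\lambda) \in B(\mathbb{C}^{n})$, and comparing $(V_{1}(\lambda) \otimes V'_{p}(\lambda))^{\ast}(V_{1}(\lambda) \otimes V'_{p}(\lambda)) = \id$ with the unitarity of $V_{1}(\lambda)$ forces $V'_{p}(\lambda)$ to be unitary. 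This defines $V'_{p} : \Lambda_{0} \to B(\mathbb{C}^{n})$ unambiguously and shows $V_{p} \simeq V_{1} \times V'_{p}$.

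It then remains to verify the cocycle assertions, which I expect to be purely bookkeeping. Writing $\sigma_{i}$ for the cocycle of $V_{i}$, so that $v_{i}$ has cocycle $\sigma_{i}^{-1}$ by the DRP condition, the restriction $V_{p} = (V_{2} \times V_{3})\vert_{H_{p}}$ has cocycle $\sigma_{2}\sigma_{3}$, while $V_{1}$ has cocycle $\sigma_{1}$. Feeding the relation $V_{p}(\lambda)V_{p}(\mu) = V_{1}(\lambda)V_{1}(\mu) \otimes V'_{p}(\lambda)V'_{p}(\mu)$ into these two cocycle identities and cancelling the nonzero operator $V_{1}(\lambda\mu)$ shows that $V'_{p}$ is a projective representation with cocycle $\sigma_{2}\sigma_{3}\sigma_{1}^{-1}$. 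Consequently $V'_{p} \times v_{2} \times v_{3}$ has cocycle $(\sigma_{2}\sigma_{3}\sigma_{1}^{-1})(\sigma_{2}^{-1})(\sigma_{3}^{-1}) = \sigma_{1}^{-1}$, which is exactly the cocycle of $v_{1}$, as required.

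The main obstacle is the invariance statement in \eqref{item:c24cd688ec1da3ff}: everything downstream is a routine application of Schur's lemma and cocycle arithmetic, whereas the invariance of $H_{p}$ under $V$ rests on the less obvious fact that passing from $w$ to $\lambda \cdot w$ leaves the $u_{1}$-isotypic subspace unchanged as a subset of $H_{2} \otimes H_{3}$. This is precisely the point at which the hypothesis $\Lambda_{0} \subseteq \Lambda_{[u_{1}]}$ is indispensable, and I would make sure to isolate it cleanly before proceeding to the decomposition.
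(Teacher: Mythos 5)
Your proof is correct, but note that the paper itself contains no proof of this lemma: it is imported from \cite{wang2019representations} (see the opening of \S 2.3), where it is proved in the greater generality of a compact quantum group in place of \( G \), so that is the only proof available for comparison. Your argument is the classical specialization of the same standard scheme: first show that \( V(\lambda) = V_{2}(\lambda) \otimes V_{3}(\lambda) \) preserves the \( u_{1} \)-isotypic subspace \( H_{p} \) (this is where \( \lambda \cdot u_{1} \simeq u_{1} \), i.e.\ \( \Lambda_{0} \subseteq \Lambda_{[u_{1}]} \), enters), then use Schur's lemma to identify \( \operatorname{Mor}_{G}(\lambda \cdot u_{1}, u_{1}) = \mathbb{C}V_{1}(\lambda) \) and factor \( V_{p}(\lambda) = V_{1}(\lambda) \otimes V_{p}'(\lambda) \) on \( H_{1} \otimes \mathbb{C}^{n} \), and finish with cocycle bookkeeping, which you carry out correctly (the cocycle of \( V_{p}' \) is \( \sigma_{1}^{-1}\sigma_{2}\sigma_{3} \), so \( V_{p}' \times v_{2} \times v_{3} \) has cocycle \( \sigma_{1}^{-1} \), that of \( v_{1} \)). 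The pointwise shortcuts you use — e.g.\ that a subspace is \( w \)-invariant iff it is \( (\lambda \cdot w) \)-invariant because \( \alpha_{\lambda} \) is a bijection of \( G \) — are legitimate here precisely because \( G \) is a classical group; these are the steps that must be recast categorically in the quantum setting, but the skeleton is identical.

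One point you should make explicit: the uniqueness you establish is uniqueness of the factorization relative to a \emph{\( G \)-equivariant} unitary identification \( H_{p} \cong H_{1} \otimes \mathbb{C}^{n} \); changing that identification conjugates \( V_{p}' \) by a fixed unitary of \( B(\mathbb{C}^{n}) \), so \( V_{p}' \) is canonical up to equivalence. This is the correct reading of the lemma, and it matters, because under the looser reading — ``\( V_{p} \) is equivalent to \( V_{1} \times V_{p}' \) merely as projective representations of \( \Lambda_{0} \)'' — uniqueness up to equivalence is actually false. For instance, take \( G = \operatorname{SU}(2) \), \( \Lambda = \Lambda_{0} = \mathbb{Z}/2\mathbb{Z} = \langle s \rangle \) acting by \( \adj_{\operatorname{diag}(i,-i)} \), \( u_{2} = u_{1} \) the fundamental representation, \( u_{3} = \varepsilon_{G} \), \( V_{1}(s) = V_{2}(s) = \operatorname{diag}(1,-1) \), \( V_{3}(s) = 1 \), and all \( v_{i} \) trivial: then \( n = 1 \), every cocycle is trivial, and \emph{both} the trivial character and the sign character \( \chi \) of \( \Lambda_{0} \) satisfy the two displayed conditions, since \( V_{1} \otimes \chi \simeq V_{1} \) as representations of \( \Lambda_{0} \) — though not via a \( G \)-intertwiner. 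Your construction automatically implements the equivalence by a \( G \)-intertwiner, so the proof is complete once this interpretation of ``unique'' is spelled out.
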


\begin{defi}
  \label{defi:0cda6750afc0777a}
  We call
  \( \dim \operatorname{Mor}_{\Lambda_{0}}(v_{1}, V'_{p} \times v_{2} \times
  v_{3}) \) the \textbf{incidence number} of the triple
  \( (\mathscr{D}_{1}, \mathscr{D}_{2}, \mathscr{D}_{3}) \) of DRPs.
\end{defi}

\begin{nota}
  \label{nota:5cb67e3471d223c1}
  For all \( r_{i} \in \Lambda \), \( i = 1,2,3 \), we denote by
  \( m(r_{1}, r_{2}, r_{3}) \) the incidence number of
  \( (\mathscr{D}_{1}, \mathscr{D}_{2}, \mathscr{D}_{3}) \). It is easy to check
  that \( m(r_{1},r_{2},r_{3}) = m(s_{1},s_{2},s_{3}) \) if
  \( r_{i}\Lambda_{i} = s_{i}\Lambda_{i} \), i.e.\ the incidence numbers
  \( m(z_{1}, z_{2}, z_{3}):= m(r_{1}, r_{2}, r_{3}) \) with \( r_{i} \) in the
  left coset \( z_{i} \in \Lambda / \Lambda_{i} \) is well-defined, i.e.\
  independent of the choice of the representative \( r_{i} \in z_{i} \),
  \( i = 1,2,3 \). Moreover, denote by \( \Lambda(z_{1},z_{2},z_{3}) \) the
  intersection \( \cap_{i=1}^{3}r_{i}\Lambda_{i}r_{i}^{-1} \), which is again
  independent of the choices \( r_{i} \in z_{i} \), \( i = 1,2,3 \).
\end{nota}

\begin{theo}
  \label{theo:5cf848969c56b608}
  The fusion rules for \( G \rtimes \Lambda \) is given by the following formula
  \begin{displaymath}
    \dim \operatorname{Mor}(W_{1}, W_{2} \times W_{3})
    = \sum_{z_{1} \in \Lambda / \Lambda_{1}}
    \sum_{z_{2} \in \Lambda / \Lambda_{2}}
    \sum_{z_{3} \in \Lambda / \Lambda_{3}}
    \frac{m(z_{1}, z_{2}, z_{3})}{[\Lambda: \Lambda(z_{1}, z_{2}, z_{3})]},
  \end{displaymath}
  where \( [\Lambda: \Lambda_{0}] \) denotes the index of the subgroup
  \( \Lambda_{0} \) of \( \Lambda \).
\end{theo}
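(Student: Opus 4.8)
The plan is to compute the intertwiner space by passing to the normal subgroup \( G \trianglelefteq G \rtimes \Lambda \) and then taking invariants under the residual action of \( \Lambda \cong (G \rtimes \Lambda)/G \). Concretely, since
\( \operatorname{Mor}_{G \rtimes \Lambda}(W_{1}, W_{2} \times W_{3}) = \operatorname{Hom}_{\mathbb{C}}(W_{1}, W_{2} \times W_{3})^{G \rtimes \Lambda} \)
and \( G \) is normal, this equals \( \bigl(\operatorname{Mor}_{G}(W_{1}, W_{2} \times W_{3})\bigr)^{\Lambda} \), where the conjugation action of \( (e,\lambda) \) preserves \( G \)-morphisms and, at this global level, descends to a genuine (linear) action of \( \Lambda \). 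The first task is therefore to understand \( \operatorname{Res}_{G} W_{i} \).

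By the classification (Theorem~\ref{theo:f1781e3972950519}) each \( W_{i} = \operatorname{Ind}_{G \rtimes \Lambda_{i}}^{G \rtimes \Lambda}\bigl(\mathscr{R}_{\Lambda_{i}}(u_{i}, V_{i}, v_{i})\bigr) \), and a direct computation of the restriction of an induced representation (using that \( G \) acts trivially on \( \Lambda/\Lambda_{i} \)) gives \( \operatorname{Res}_{G} W_{i} \cong \bigoplus_{z_{i} \in \Lambda/\Lambda_{i}} (z_{i} \cdot u_{i})^{\oplus \dim K_{i}} \), the \( z_{i} \)-summand being the \( (z_{i} \cdot u_{i}) \)-isotypic component, with \( (e,\lambda) \) sending the \( z_{i} \)-component to the \( \lambda z_{i} \)-component. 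Feeding these decompositions into \( \operatorname{Mor}_{G}(W_{1}, W_{2} \times W_{3}) \) and collecting \( G \)-isotypic pieces, I would write this space as \( \bigoplus_{(z_{1},z_{2},z_{3})} M_{z_{1},z_{2},z_{3}} \) with
\[
  M_{z_{1},z_{2},z_{3}} = \operatorname{Mor}_{G}\bigl(z_{1} \cdot u_{1},\, (z_{2} \cdot u_{2}) \times (z_{3} \cdot u_{3})\bigr) \otimes \operatorname{Hom}(K_{1}, K_{2} \otimes K_{3}),
\]
and the residual \( \Lambda \) permutes these summands by the diagonal left translation \( (z_{1},z_{2},z_{3}) \mapsto (\lambda z_{1}, \lambda z_{2}, \lambda z_{3}) \). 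By orbit--stabilizer, the stabilizer of \( (z_{1},z_{2},z_{3}) = (r_{1}\Lambda_{1}, r_{2}\Lambda_{2}, r_{3}\Lambda_{3}) \) is exactly \( \Lambda(z_{1},z_{2},z_{3}) = \cap_{i} r_{i}\Lambda_{i}r_{i}^{-1} \), so taking \( \Lambda \)-invariants of a permutation-type direct sum yields \( \dim \operatorname{Mor}_{G \rtimes \Lambda}(W_{1}, W_{2} \times W_{3}) = \sum_{\text{orbits}} \dim \bigl(M_{z_{1},z_{2},z_{3}}\bigr)^{\Lambda(z_{1},z_{2},z_{3})} \), one representative per orbit.

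The heart of the argument, and the step I expect to be the main obstacle, is to identify \( \dim (M_{z_{1},z_{2},z_{3}})^{\Lambda(z_{1},z_{2},z_{3})} \) with the incidence number \( m(z_{1},z_{2},z_{3}) \). Writing \( \Lambda_{0} = \Lambda(z_{1},z_{2},z_{3}) \), the group \( \Lambda_{0} \) acts on \( M_{z_{1},z_{2},z_{3}} \) through the projective representations \( V_{i} \) on the \( \operatorname{Mor}_{G} \)-factor and through \( v_{1}^{c} \times v_{2} \times v_{3} \) on the \( \operatorname{Hom}(K_{1}, K_{2} \otimes K_{3}) \)-factor (all twisted by the chosen \( r_{i} \)). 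Here I would invoke the reduction Lemma~\ref{lemm:ac666372e8d2b33a} applied to the twisted DRPs \( r_{i} \cdot \mathscr{D}_{i} \), which share common subgroup \( \Lambda_{0} \): it shows that the maximal \( u_{1} \)-isotypic subrepresentation carries the projective representation \( V_{p} \simeq V_{1} \times V'_{p} \), so that after cancelling \( V_{1} \) the \( \operatorname{Mor}_{G} \)-factor carries precisely \( V'_{p} \). The defining properties of the DRPs (the cocycle of each \( v_{i} \) is opposite to that of \( V_{i} \), and \( V'_{p} \times v_{2} \times v_{3} \) has the same cocycle as \( v_{1} \)) then guarantee that the total cocycle on \( M_{z_{1},z_{2},z_{3}} \) is trivial, so the invariants genuinely compute the multiplicity of the trivial representation in \( V'_{p} \times v_{1}^{c} \times v_{2} \times v_{3} \), i.e.\ \( \dim \operatorname{Mor}_{\Lambda_{0}}(v_{1}, V'_{p} \times v_{2} \times v_{3}) = m(z_{1},z_{2},z_{3}) \). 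The delicate bookkeeping of these cocycles and the identification of the \( \Lambda_{0} \)-action with the data produced by the reduction lemma is where the real care is needed.

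Finally, I would convert the orbit sum into the symmetric sum of the statement. A short conjugation argument shows that the incidence number is \( \Lambda \)-invariant, \( m(\lambda z_{1}, \lambda z_{2}, \lambda z_{3}) = m(z_{1},z_{2},z_{3}) \), since \( \operatorname{Ad}_{\lambda} \) carries the entire configuration attached to \( (z_{1},z_{2},z_{3}) \) isomorphically onto that attached to \( (\lambda z_{1}, \lambda z_{2}, \lambda z_{3}) \). Combined with the orbit--stabilizer identity \( \lvert \Lambda \cdot (z_{1},z_{2},z_{3}) \rvert = [\Lambda : \Lambda(z_{1},z_{2},z_{3})] \), summing \( m \) over a full \( \Lambda \)-orbit while dividing each term by the orbit size reproduces \( m \) exactly once, whence \( \sum_{\text{orbits}} m(z_{1},z_{2},z_{3}) = \sum_{z_{1},z_{2},z_{3}} \frac{m(z_{1},z_{2},z_{3})}{[\Lambda : \Lambda(z_{1},z_{2},z_{3})]} \), which is the claimed formula.
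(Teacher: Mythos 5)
Your proposal is correct, but there is no in-paper proof to compare it against: Theorem~\ref{theo:5cf848969c56b608} is stated in the preliminaries with its proof explicitly deferred to \cite{wang2019representations}, where the result is proved in the larger generality of a compact \emph{quantum} group in place of \( G \). Your route is the classical Clifford--Mackey one --- restrict along the normal subgroup \( G \trianglelefteq G \rtimes \Lambda \), decompose \( \operatorname{Mor}_{G}(W_{1}, W_{2} \times W_{3}) \) into blocks indexed by coset triples, take invariants under the residual permutation action of \( \Lambda \), and count orbits. What this buys is elementariness: it uses points of \( G \), normality, and conjugation by \( (e,\lambda) \), all available because the \( G \) of this paper is a genuine compact group; what it gives up is the generality of the cited reference, to whose quantum setting a pointwise argument does not transport verbatim.

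I checked your steps and they go through. The restriction formula \( \operatorname{Res}_{G} W_{i} \cong \bigoplus_{z_{i} \in \Lambda/\Lambda_{i}} (z_{i} \cdot u_{i})^{\oplus \dim K_{i}} \) is correct, and the summands really are the isotypic components precisely because the parameters are \emph{distinguished}, i.e.\ \( \Lambda_{i} \) is the full stabilizer of \( [u_{i}] \); this is where the DRP hypothesis enters and deserves to be said explicitly. The stabilizer of \( (z_{1},z_{2},z_{3}) \) under diagonal translation is indeed \( \Lambda(z_{1},z_{2},z_{3}) = \cap_{i} r_{i}\Lambda_{i}r_{i}^{-1} \), and invariants of a permutation-type sum reduce to stabilizer-invariants of one block per orbit. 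For the block computation: conjugation by \( W_{i}(e,\lambda) \) acts on the \( z_{i} \)-component through \( (r_{i} \cdot V_{i}) \otimes (r_{i} \cdot v_{i}) \), so Schur's lemma together with Lemma~\ref{lemm:ac666372e8d2b33a} applied to the translated parameters \( r_{i} \cdot \mathscr{D}_{i} \) (whose common subgroup is \( \Lambda(z_{1},z_{2},z_{3}) \)) turns the action on the \( \operatorname{Mor}_{G} \)-factor into exactly \( V_{p}' \); writing \( \omega_{i} \) for the cocycle of \( r_{i} \cdot V_{i} \), the total cocycle \( (\omega_{1}^{-1}\omega_{2}\omega_{3}) \cdot \omega_{1} \cdot \omega_{2}^{-1} \cdot \omega_{3}^{-1} \) is trivial, so the invariants compute \( \dim \operatorname{Mor}_{\Lambda(z_{1},z_{2},z_{3})}\bigl(r_{1}\cdot v_{1},\, V_{p}' \times (r_{2}\cdot v_{2}) \times (r_{3} \cdot v_{3})\bigr) = m(z_{1},z_{2},z_{3}) \). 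Note that your reading of Notation~\ref{nota:5cb67e3471d223c1} --- that \( m(r_{1},r_{2},r_{3}) \) is the incidence number of the \emph{translated} triple \( (r_{1}\cdot\mathscr{D}_{1}, r_{2}\cdot\mathscr{D}_{2}, r_{3}\cdot\mathscr{D}_{3}) \) --- is the only sensible one; the notation as printed has a typo. The final conversion from an orbit sum to the symmetric sum is right because both \( m \) and \( [\Lambda : \Lambda(z_{1},z_{2},z_{3})] \) are constant along diagonal \( \Lambda \)-orbits. The one step you should write out in full, rather than sketch, is the identification of the stabilizer action on the \( \operatorname{Mor}_{G} \)-factor with \( V_{p}' \), since that is the only place an error could hide; as outlined, it is the right mechanism and it does work.
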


\section{Nontrivial bicrossed product from semidirect product}
\label{sec:e9d9394edcf4dfb6}

Let \( G \) be a compact group, \( \Gamma \) a discrete group acting on \( G \)
via topological automorphisms given by a group morphism
\( \tau : \Gamma \to \aut(G) \). Using these data, one can form the semidirect
product \( G \rtimes_{\tau} \Gamma \), which is a locally compact group whose
underlying topological space is the topological product \( G \times \Gamma \),
and whose group law is given by
\begin{equation}
  \label{eq:fd5ca7ce6e2dc98c}
  ( g_{1}, \gamma_{1} ) ( g_{2}, \gamma_{2} ) %
  = \left( g_{1} \tau_{\gamma_{1}}(g_{2}), \gamma_{1} \gamma_{2} \right).
\end{equation}

Note that the insertion
\( \iota_{\Gamma} : \Gamma \to G \rtimes_{\tau} \Gamma \),
\( \gamma \mapsto ( e_{G}, \gamma ) \) (\( e_{G} \) denotes the identity in
\( G \)) is a group morphism. In particular, the mapping
\begin{equation}
  \label{eq:2ae50e2abed7ccb4}
  \begin{split}
    \theta : \Gamma & \to \aut\left(G \rtimes_{\tau} \Gamma\right) \\
    \gamma & \mapsto ( \adj \circ \iota_{\Gamma} )( \gamma ) = \adj_{( e_{G},
      \gamma )}
  \end{split}
\end{equation}
is a group morphism. For all \( (g, r) \in G \rtimes_{\tau} \Gamma \) and
\( \gamma \in \Gamma \), we have
\begin{equation}
  \label{eq:95d755a8d05ffa2b}
  ( e_{G}, \gamma ) (g, r) {( e_{G}, \gamma  )}^{-1} %
  = \left( \tau_{\gamma}(g), \gamma r \right)
  \left( e_{G}, \gamma^{-1} \right) %
  = \left( \tau_{\gamma}(g), \gamma r \gamma^{-1} \right).
\end{equation}
Thus as a map from the set \( G \times \Gamma \) to itself, we have
\begin{equation}
  \label{eq:a20fdc545f3d2677}
  \theta_{\gamma} := \theta( \gamma ) = \tau_{\gamma} \times \adj_{\gamma} :
  G \times \Gamma \to G \times \Gamma.
\end{equation}

Now consider any \emph{finite} subgroup \( \Lambda \) of \( \Gamma \). The group
morphism \( \theta \) defined in \eqref{eq:2ae50e2abed7ccb4} restricts to the
subgroup \( \Lambda \) to give an action
\( \Lambda \curvearrowright G \rtimes_{\tau} \Gamma \) by topological
automorphisms. This allows us to form yet another semidirect product
\( (G \rtimes_{a} \Gamma) \rtimes_{\theta} \Lambda \), whose underlying
topological space is \( G \times \Gamma \times \Lambda \). It is clear that the
group law on \( (G \rtimes_{\tau} \Gamma) \rtimes_{\theta} \Lambda \) is given
by
\begin{equation}
  \label{eq:e392544b23694d1e}
  \begin{split}
    & \leadmathskip (g_{1}, \gamma_{1}, r_{1})(g_{2}, \gamma_{2}, r_{2}) %
    = \bigl((g_{1}, \gamma_{1}) \theta_{r_{1}}(g_{2}, \gamma_{2}),
    r_{1}r_{2}\bigr) \\
    &= \Bigl((g_{1}, \gamma_{1})
    \bigl( \tau_{r_{1}}(g_{2}), r_{1}\gamma_{2}r_{1}^{-1} \bigr),
    r_{1}r_{2} \Bigr) \\
    &= \bigl(g_{1} \tau_{\gamma_{1}r_{1}}(g_{2}),
    \gamma_{1}r_{1}\gamma_{2}r_{1}^{-1}, r_{1}r_{2}\bigr).
  \end{split}
\end{equation}
By \eqref{eq:e392544b23694d1e}, both the mapping
\begin{equation}
  \label{eq:816d9f65669dad36}
  \begin{split}
    \iota_{1,3} : G \rtimes_{\tau} \Lambda  %
    &\to (G \rtimes_{\tau} \Gamma) \rtimes_{\theta} \Lambda \\
    (g, r) & \mapsto (g, e, r),
  \end{split}
\end{equation}
and
\begin{equation}
  \label{eq:932dd123b2df2201}
  \begin{split}
    \iota_{2} : \Gamma %
    & \to (G \rtimes_{\tau} \Gamma) \rtimes_{\theta} \Lambda \\
    \gamma &\mapsto (e_{G}, \gamma, e)
  \end{split}
\end{equation}
are injective group morphisms, such that for all \( \gamma \in \Gamma \),
\( (g,r) \in G \rtimes_{\tau} \Lambda \), we have
\begin{equation}
  \label{eq:010a36418b081716}
  \forall g \in G, \, \gamma, r \in \Gamma, \quad
  \iota_{1,3}(g, r) \iota_{2}(r^{-1}\gamma r) = (g, e, r)
  (e_{G}, r^{-1}\gamma r, e) %
  = (g, \gamma, r),
\end{equation}
which implies that
\begin{equation}
  \label{eq:c1b83492743362ba}
  \iota_{1,3}(G \rtimes_{\tau} \Lambda) \iota_{2}(\Gamma)
  = (G \rtimes_{\tau} \Gamma) \rtimes_{\theta} \Lambda.
\end{equation}
It is clear that
\begin{equation}
  \label{eq:4b0a583b32a913fa}
  \iota_{1,3}(G \rtimes_{\tau} \Lambda) \cap \iota_{2}(\Gamma)
  = \set*{(e_{G}, e, e)}.
\end{equation}
Moreover,
\begin{equation}
  \label{eq:0b444f0c31403c29}
  \begin{split}
    \forall g \in G,\, \gamma, r \in \Gamma, %
    &\leadmathskip \iota_{2}(\gamma) \iota_{1,3}(g,r) %
    = (e_{G}, \gamma, e) (g, e, r) \\
    &= \bigl( \tau_{\gamma}(g), \gamma, r \bigr) %
    = \bigl( \tau_{\gamma}(g), e, r \bigr) (e_{G}, r^{-1}\gamma r, e) \\
    &= \iota_{1,3}(g, r) \iota_{2}(r^{-1}\gamma r).
  \end{split}
\end{equation}

\begin{prop}
  \label{prop:71acce0cf99cd889}
  Let \( \Gamma \) be a discrete group, \( G \) a compact group, and
  \( \tau : \Gamma \to \aut(G) \) a left action of \( \Gamma \) on \( G \) by
  topological automorphisms. If \( \Lambda \) is a \emph{finite} subgroup of
  \( \Gamma \), then \( \pair*{\Gamma}{G \rtimes_{\tau} \Lambda} \) is a matched
  pair of groups with left action
  \begin{equation}
    \label{eq:4dd73304f9bbd58d}
    \begin{split}
      \alpha^{\Lambda} : \Gamma \times (G \rtimes_{\tau} \Lambda) %
      & \to G \rtimes_{\tau} \Lambda \\
      \bigl(\gamma, (g, r)\bigr) %
      & \mapsto \bigl( \tau_{\gamma}(g), r \bigr),
    \end{split}
  \end{equation}
  and right action
  \begin{equation}
    \label{eq:f5beba9cbb7ec1dd}
    \begin{split}
      \beta^{\Lambda} : \Gamma \times (G \rtimes_{\tau} \Lambda) %
      & \to \Gamma \\
      \bigl(\gamma, (g, r)\bigr) %
      & \mapsto r^{-1}\gamma r.
    \end{split}
  \end{equation}
  Moreover, the following hold.
  \begin{enumerate}
  \item \label{item:798330b3b2b36c65} The action \( \alpha^{\Lambda} \) is
    trivial if and only if \( \tau \) is trivial;
  \item \label{item:7c33a76924999328} The action \( \beta^{\Lambda} \) is
    trivial if and only if \( \Lambda \subseteq Z(\Gamma) \), where
    \( Z(\Gamma) \) is the centre of \( \Gamma \).
  \end{enumerate}
\end{prop}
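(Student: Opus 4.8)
The plan is to invoke the well-known characterization of matched pairs recalled in \S~\ref{sec:11274d9921840817}: a pair \( \pair*{\Gamma}{K} \) of a discrete group \( \Gamma \) and a compact group \( K \) is matched precisely when both can be realized as topological subgroups of a single locally compact group \( H \) with \( H = \Gamma K \) and \( \Gamma \cap K = \set*{e_{H}} \), in which case the underlying actions are read off from the relation \( \gamma g = \alpha_{\gamma}(g)\beta_{g}(\gamma) \). Here I would take \( K = G \rtimes_{\tau} \Lambda \), which is compact since \( G \) is compact and \( \Lambda \) is finite, and I would use \( H := (G \rtimes_{\tau} \Gamma) \rtimes_{\theta} \Lambda \) as the ambient locally compact group, with \( \Gamma \) and \( G \rtimes_{\tau} \Lambda \) embedded via the injective morphisms \( \iota_{2} \) and \( \iota_{1,3} \) of \eqref{eq:932dd123b2df2201} and \eqref{eq:816d9f65669dad36}.

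Essentially all of the verification has already been assembled in the computations preceding the statement. First I would note that \( \iota_{1,3}(G \rtimes_{\tau} \Lambda) = G \times \set*{e} \times \Lambda \) and \( \iota_{2}(\Gamma) = \set*{e_{G}} \times \Gamma \times \set*{e} \) are, respectively, a compact and a discrete topological subgroup of \( H \). The factorization \( H = \iota_{1,3}(G \rtimes_{\tau} \Lambda)\,\iota_{2}(\Gamma) \) is exactly \eqref{eq:c1b83492743362ba}, and since both factors are subgroups this is equivalent to \( H = \iota_{2}(\Gamma)\,\iota_{1,3}(G \rtimes_{\tau} \Lambda) \) by passing to inverses; the trivial intersection \( \iota_{1,3}(G \rtimes_{\tau} \Lambda) \cap \iota_{2}(\Gamma) = \set*{(e_{G}, e, e)} \) is \eqref{eq:4b0a583b32a913fa}. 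These two facts verify the hypotheses of the characterization, so \( \pair*{\Gamma}{G \rtimes_{\tau} \Lambda} \) is a matched pair.

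To identify the actions, I would simply reinterpret \eqref{eq:0b444f0c31403c29}, which rewrites \( \iota_{2}(\gamma)\iota_{1,3}(g,r) \) as \( \iota_{1,3}(\tau_{\gamma}(g), r)\,\iota_{2}(r^{-1}\gamma r) \). Comparing with \( \gamma g = \alpha_{\gamma}(g)\beta_{g}(\gamma) \) and using the trivial intersection to guarantee uniqueness of the factorization, this reads off \( \alpha^{\Lambda}_{\gamma}(g,r) = (\tau_{\gamma}(g), r) \) and \( \beta^{\Lambda}_{(g,r)}(\gamma) = r^{-1}\gamma r \), which are precisely \eqref{eq:4dd73304f9bbd58d} and \eqref{eq:f5beba9cbb7ec1dd}.

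Finally, the two triviality criteria follow by direct inspection of these formulas. For \ref{item:798330b3b2b36c65}, \( \alpha^{\Lambda} \) is trivial iff \( \tau_{\gamma}(g) = g \) for all \( \gamma \in \Gamma \) and \( g \in G \), which is exactly triviality of \( \tau \). For \ref{item:7c33a76924999328}, \( \beta^{\Lambda} \) is trivial iff \( r^{-1}\gamma r = \gamma \) for all \( \gamma \in \Gamma \) and all \( r \in \Lambda \), i.e.\ iff every element of \( \Lambda \) is central in \( \Gamma \), which says exactly \( \Lambda \subseteq Z(\Gamma) \). I do not anticipate a genuine obstacle here: the only point requiring slight care is the topological bookkeeping — confirming that the embeddings are homeomorphisms onto their images and that \( H \) is locally compact — but this is immediate from the product topology on \( G \times \Gamma \times \Lambda \) with \( \Gamma, \Lambda \) discrete and \( G \) compact.
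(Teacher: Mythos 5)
Your proposal is correct and takes essentially the same approach as the paper: the paper's own proof consists precisely of citing \eqref{eq:c1b83492743362ba}, \eqref{eq:4b0a583b32a913fa} and \eqref{eq:0b444f0c31403c29} to verify the subgroup characterization of matched pairs inside the ambient group \( (G \rtimes_{\tau} \Gamma) \rtimes_{\theta} \Lambda \), and then notes that the two triviality criteria are immediate from the formulas \eqref{eq:4dd73304f9bbd58d} and \eqref{eq:f5beba9cbb7ec1dd}. Your write-up merely makes explicit the minor bookkeeping (reordering the factorization via inverses, uniqueness of the decomposition, and the topological remarks) that the paper leaves implicit.
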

\begin{proof}
  That \( \pair*{\Gamma}{G \rtimes_{\tau} \Lambda} \) is a matched pair with the
  actions \( \alpha^{\Lambda} \) and \( \beta^{\Lambda} \) follows from
  \eqref{eq:c1b83492743362ba}, \eqref{eq:4b0a583b32a913fa} and
  \eqref{eq:0b444f0c31403c29}. \ref{item:798330b3b2b36c65} and
  \ref{item:7c33a76924999328} are direct consequences of the definition of
  \( \alpha^{\Lambda} \) and \( \beta^{\Lambda} \).
\end{proof}

\section{More notations}
\label{sec:18f8efc7904f301f}

For the convenience of our discussion, we now introduce and fix some notations
related to the bicrossed product of the matched pair
\( \pair*{\Gamma}{G \rtimes_{\tau} \Lambda} \) with the actions
\( \alpha^{\Lambda} \) and \( \beta^{\Lambda} \), as described in
\S~\ref{sec:e9d9394edcf4dfb6}.

The bicrossed product of the matched pair
\( \pair*{\Gamma}{G \rtimes_{\tau} \Lambda} \) is denoted by
\( \Gamma {}_{\alpha^{\Lambda}} \bowtie_{\beta^{\Lambda}} (G \rtimes_{\tau}
\Lambda) \). When there is no risk of confusion, we often omit the actions and
simply write \( G \rtimes \Lambda \) and
\( \Gamma \bowtie (G \rtimes \Lambda) \).  Moreover, \( \aut(G) \) denotes the
group of topological automorphism of \( G \).

The isotropy subgroup of \( G \rtimes \Lambda \) fixing \( \gamma \in \Gamma \)
with respect to the action \( \beta^{\Lambda} \) is easily seen to be
\( G \rtimes \Lambda_{\gamma} \), where
\begin{equation}
  \label{eq:756fc81603ecd400}
  \Lambda_{\gamma} := \set*{r \in \Lambda \given \gamma r = r \gamma}.
\end{equation}

For \( x \in \irr(G) \), \( \gamma \in \Gamma \), we denote the isotropy
subgroup of \( \Lambda_{\gamma} \) fixing \( x \) with respect to the action
\( \Lambda_{\gamma} \curvearrowright \irr(G) \),
\( (r, [u]) \mapsto [u \circ \tau_{r}] \) by \( \Lambda_{\gamma, x} \), i.e.\
\begin{equation}
  \label{eq:d76081fad86adf15}
  \Lambda_{\gamma, x} := \set*{r \in \Lambda_{\gamma} \given r \cdot x = x}.
\end{equation}

We also need to fix some notations from the representation theory of
\( \Gamma \bowtie (G \rtimes \Lambda) \), this is where the preliminaries
described in \S~\ref{sec:46a24bbbeda06e15} come into play.

Let \( \gamma \in \Gamma \). Suppose \( \Lambda_{0} \) is an isotropy subgroup
of \( \Lambda_{\gamma} \) with respect to the action
\( \Lambda_{\gamma} \curvearrowright \irr(G) \). Let
\( \mathfrak{D}_{\gamma, \Lambda_{0}} \) denotes the set of equivalent
distinguished representation parameters (see \S~\ref{sec:2a0c20223a216ec6})
associated with \( \Lambda_{0} \), and
\begin{equation}
  \label{eq:8952fdcb78f2c81f}
  \Psi_{\gamma, \Lambda_{0}} : \mathfrak{D}_{\gamma, \Lambda_{0}}
  \to \irr(G \rtimes \Lambda_{\gamma})
\end{equation}
be the injection used to classify irreducible unitary representations of
\( G \rtimes \Lambda_{\gamma} \) as in Theorem~\ref{theo:f1781e3972950519}. Let
\( \mathfrak{D}_{\gamma} \) be the set of equivalence classes of all
distinguished representation parameters for the semidirect product
\( G \rtimes \Lambda_{\gamma} \), we then have an action of
\( \Lambda_{\gamma} \) on the class of all distinguished representation
parameters for \( G \rtimes \Lambda_{\gamma} \), which passes to the quotient
and yields an action
\( \Lambda_{\gamma} \curvearrowright \mathfrak{D}_{\gamma} \) as described in
\S~\ref{sec:2a0c20223a216ec6}. We thus have the classification \emph{surjection}
\begin{equation}
  \label{eq:22ae8c0248b07582}
  \begin{split}
    \Psi_{\gamma} : \mathfrak{D}_{\gamma} %
    & \to \irr(G \rtimes \Lambda_{\gamma}) \\
    [(u, V, v)] \in \mathfrak{D}_{\gamma, \Lambda_{0}} %
    & \mapsto \Psi_{\gamma, \Lambda_{0}}\bigl([(u, V, v)]\bigr),
  \end{split}
\end{equation}
whose fibers are exactly the orbits for the action
\( \Lambda_{\gamma} \curvearrowright \mathfrak{D}_{\gamma} \).

When \( \gamma = e_{\Gamma} \), we then have \( \Lambda_{\gamma} = \Lambda \)
and we write \( \Psi_{e_{\Gamma}} \) simply as \( \Psi \).

We use \( \orbit_{\beta^{\Lambda}} \) to denote the set of
\( \beta^{\Lambda} \)\nobreakdash-orbits. For each
\( \mathscr{O} \in \orbit_{\beta^{\Lambda}} \), let
\( \mathfrak{R}_{\mathscr{O}} \) be the mapping from the class of
\( \mathscr{O} \)\nobreakdash-representations of \( G \rtimes \Lambda \) to the
class of finite dimensional unitary representations of the bicrossed product
\( \Gamma \bowtie (G \rtimes \Lambda) \) as in \S~\ref{sec:d0a3089327fd8687},
and let \( \irr_{\mathscr{O}}(G \rtimes \Lambda) \) denote the set of
equivalence classes of \( \mathscr{O} \)\nobreakdash-irreducible
\( \mathscr{O} \)\nobreakdash-representations. We thus have the classification
bijection
\begin{equation}
  \label{eq:d1a6514ac3409730}
  \begin{split}
    \mathfrak{R} : \coprod_{\mathscr{O} \in \orbit_{\beta^{\Lambda}}} %
    \irr_{\mathscr{O}}(G \rtimes \Lambda) %
    & \to \irr\bigl( \Gamma \bowtie (G \rtimes \Lambda) \bigr) \\
    [U] \in \irr_{\mathscr{O}}(G \rtimes \Lambda) %
    & \mapsto [\mathfrak{R}_{\mathscr{O}}(U)].
  \end{split}
\end{equation}

\section{Proof of Theorem~\ref{theo:69214233f77b0115}}
\label{sec:934056265b2fce2f}

The purpose of this section is to establish Theorem~\ref{theo:69214233f77b0115}
which gives a sufficient condition for the (dual of) bicrossed product
\( \widehat{\Gamma \bowtie (G \rtimes \Lambda)} \) to have polynomial growth
(resp.\ property \RD).

We first establish the following technical result.

\begin{lemm}
  \label{lemm:57fb1888aba4d546}
  Suppose \( l_{\widehat{G}} : \irr(G) \to \mathbb{R}_{\geq0} \) is a
  \( \Gamma \)\nobreakdash-invariant length function on \( \widehat{G} \), i.e.\
  \( l_{\widehat{G}}([u^{x} \circ \tau_{\gamma}]) = l_{\widehat{G}}(x) \)
  whenever \( \gamma \in \Gamma \), \( u^{x} \in x \in \irr(G) \), and
  \( l_{\Gamma} \) is a \( \beta^{\Lambda} \)-invariant length function on
  \( \Gamma \). Then
  \begin{align*}
    l_{\widehat{G \rtimes \Lambda}} : \irr(G \rtimes \Lambda) %
    & \to \mathbb{R}_{\geq 0} \\
    \Psi([(u, V, v)]) & \mapsto l_{\widehat{G}}([u])
  \end{align*}
  is a well-defined length function on \( \widehat{G \rtimes \Lambda} \) such
  that the pair \( \pair*{l_{\Gamma}}{l_{\widehat{G \rtimes \Lambda}}} \) is
  matched.
\end{lemm}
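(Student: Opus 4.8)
The plan is to produce all the data witnessing both claims from the single rule
\[
  l_{\mathscr{O}}([U]) := l_{\widehat{G}}([u]) + l_{\Gamma}(\gamma), \qquad \gamma \in \mathscr{O},
\]
where a class \( [U] \in \irr_{\mathscr{O}}(G \rtimes \Lambda) \) is transported by the bijection \( \varPhi_{\gamma} \) of Proposition~\ref{prop:d1bbc7a1a0561f1e} to a class in \( \irr(G \rtimes \Lambda_{\gamma}) \), written as \( \Psi_{\gamma}([(u,V,v)]) \) for a DRP whose \( G \)-component is \( u \); taking \( \mathscr{O} = \set*{e_{\Gamma}} \) recovers \( l_{\widehat{G \rtimes \Lambda}} \). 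The first task is well-definedness. Since the fibres of \( \Psi_{\gamma} \) are the orbits of \( \Lambda_{\gamma} \curvearrowright \mathfrak{D}_{\gamma} \) (Theorem~\ref{theo:f1781e3972950519}), and this action replaces \( u \) by a translate \( u \circ \tau_{r}^{-1} \) with \( r \in \Lambda_{\gamma} \subseteq \Gamma \), the value \( l_{\widehat{G}}([u]) \) is unchanged by the \( \Gamma \)\nobreakdash-invariance hypothesis; independence of the choice of \( \gamma \in \mathscr{O} \) follows because passing from \( \gamma \) to \( r^{-1}\gamma r \) conjugates the isotropy group and again alters \( u \) only by a \( \tau \)\nobreakdash-translate, while \( l_{\Gamma}(\gamma) \) is constant on \( \mathscr{O} \) by \( \beta^{\Lambda} \)\nobreakdash-invariance.

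Next I verify that \( l_{\widehat{G \rtimes \Lambda}} \) is a length function. Normalisation is immediate since the trivial representation corresponds to the DRP with \( u = \varepsilon_{G} \); the symmetry \( l_{\widehat{G\rtimes\Lambda}}(x) = l_{\widehat{G\rtimes\Lambda}}(\overline{x}) \) follows because \( \Psi \) preserves contragredients (Theorem~\ref{theo:f1781e3972950519}) and the contragredient DRP has \( G \)-component \( \overline{u} \), whose length equals that of \( u \). For subadditivity I feed the fusion formula of Theorem~\ref{theo:5cf848969c56b608} into the definition: if \( \dim \operatorname{Mor}(W_{1}, W_{2} \times W_{3}) \neq 0 \) then some incidence number \( m(z_{1},z_{2},z_{3}) \) is nonzero, which by Lemma~\ref{lemm:ac666372e8d2b33a} forces the multiplicity \( n = \dim \operatorname{Mor}_{G}\bigl(u_{1}\circ\tau_{r_{1}}^{-1},\,(u_{2}\circ\tau_{r_{2}}^{-1}) \times (u_{3}\circ\tau_{r_{3}}^{-1})\bigr) \) to be positive; subadditivity of \( l_{\widehat{G}} \) together with its \( \Gamma \)\nobreakdash-invariance then yields \( l_{\widehat{G}}([u_{1}]) \le l_{\widehat{G}}([u_{2}]) + l_{\widehat{G}}([u_{3}]) \), i.e.\ the desired inequality for \( l_{\widehat{G\rtimes\Lambda}} \).

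It remains to show that \( \set*{l_{\mathscr{O}}} \) affords \( \pair*{l_{\Gamma}}{l_{\widehat{G\rtimes\Lambda}}} \), i.e.\ the five clauses of Definition~\ref{defi:46e581e887d239e1}. The first and fifth reduce to \( l_{\widehat{G}}([\varepsilon_{G}]) = 0 \) and to the constancy of \( l_{\Gamma} \) on orbits; the fourth holds by construction; the second uses Theorem~\ref{theo:82a7cf677459fdd2}, which realises the conjugate \( \mathscr{O}^{-1} \)\nobreakdash-representation as induced from \( w \circ \alpha^{\Lambda}_{\gamma^{-1}} \), so its \( G \)-component is a \( \tau_{\gamma^{-1}} \)\nobreakdash-translate of \( u \) (up to contragredient) and \( l_{\Gamma}(\gamma^{-1}) = l_{\Gamma}(\gamma) \). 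The real work is the third clause. Writing \( \gamma = \gamma_{3} \in \mathscr{O}_{3} \subseteq \mathscr{O}_{1}\mathscr{O}_{2} \), the \( l_{\Gamma} \)\nobreakdash-part is handled by picking \( r_{1} \in \mathscr{O}_{1} \), \( r_{2} \in \mathscr{O}_{2} \) with \( r_{1}r_{2} = \gamma \) and invoking subadditivity of \( l_{\Gamma} \). For the \( l_{\widehat{G}} \)\nobreakdash-part I restrict the twisted tensor product \( U_{1} \times_{\gamma} U_{2} \) to the normal subgroup \( G \subseteq G \rtimes \Lambda_{\gamma} \): because \( \beta^{\Lambda} \) acts trivially along \( G \), only diagonal terms \( r_{i} = s_{i} \) survive, and the restriction becomes a direct sum, over pairs with \( r_{1}r_{2} = \gamma \), of representations of the shape \( (w_{r_{1}}\vert_{G} \circ \tau_{r_{2}}) \otimes (w_{r_{2}}\vert_{G}) \). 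Each \( w_{r_{i}}\vert_{G} \) decomposes, by Mackey restriction of the induced representation defining it, into \( \tau \)\nobreakdash-translates of \( u_{i} \). Hence if \( u^{(3)}_{\gamma,\gamma}\vert_{G_{\gamma}} \) occurs in \( U_{1} \times_{\gamma} U_{2} \), restricting to \( G \) shows a \( \tau \)\nobreakdash-translate of \( u_{3} \) occurs in a tensor product of a \( \tau \)\nobreakdash-translate of \( u_{1} \) with one of \( u_{2} \); subadditivity and \( \Gamma \)\nobreakdash-invariance of \( l_{\widehat{G}} \) give \( l_{\widehat{G}}([u_{3}]) \le l_{\widehat{G}}([u_{1}]) + l_{\widehat{G}}([u_{2}]) \), and adding the \( l_{\Gamma} \) inequality closes the clause.

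The main obstacle is precisely this restriction-to-\( G \) computation in the third clause: one must track how the twisted tensor product interacts with the induction defining the \( \mathscr{O} \)\nobreakdash-representations and confirm that, after restricting to the normal subgroup \( G \), every irreducible \( G \)-constituent is a \( \tau \)\nobreakdash-translate of a tensor product of \( \tau \)\nobreakdash-translates of the \( G \)-components \( u_{1}, u_{2} \). The bookkeeping of the matrix-unit indices in the definition of \( U_{1} \times_{\gamma} U_{2} \) and of the relevant double cosets in Mackey's theorem is where care is needed; everything else is a formal consequence of \( \Gamma \)\nobreakdash- and \( \beta^{\Lambda} \)\nobreakdash-invariance together with the cited classification, conjugation and fusion theorems.
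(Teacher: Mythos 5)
Your proposal is correct, and its skeleton coincides with the paper's own proof: the same formula \( l_{\mathscr{O}}\bigl(\Psi_{\gamma}([(u,V,v)])\bigr) = l_{\widehat{G}}([u]) + l_{\Gamma}(\gamma) \) defines the affording family, clauses one, two, four and five of Definition~\ref{defi:46e581e887d239e1} are handled identically (conjugation via Theorem~\ref{theo:82a7cf677459fdd2}, the \( l_{\Gamma} \)-part of clause three via \( \mathscr{O}_{3} \subseteq \mathscr{O}_{1}\mathscr{O}_{2} \) and \( \beta^{\Lambda} \)-invariance), and both proofs funnel everything into the same key containment, namely that some \( \tau \)-translate of \( [u_{3}] \) sits inside a tensor product of \( \tau \)-translates of \( [u_{1}] \) and \( [u_{2}] \), after which \( \Gamma \)-invariance and subadditivity of \( l_{\widehat{G}} \) finish the job. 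The one genuine divergence is how that containment is extracted from the non-vanishing of \( \morph_{G \rtimes \Lambda_{\gamma}}\bigl(u^{(3)}_{\gamma,\gamma}\vert_{G \rtimes \Lambda_{\gamma}}, U_{1} \times_{\gamma} U_{2}\bigr) \): the paper stays inside \( \pol(G) \otimes C(\Lambda_{\gamma}) \) and argues with characters and matrix coefficients --- the character of \( u^{(3)}_{\gamma,\gamma}\vert_{G\rtimes\Lambda_{\gamma}} \) lies in \( \vect\bigl(\bigcup_{r \in \Lambda} M(r \cdot u_{3})\bigr) \otimes C(\Lambda_{\gamma}) \), that of \( U_{1} \times_{\gamma} U_{2} \) in \( \vect\bigl([\Gamma \cdot M(u_{1})][\Gamma \cdot M(u_{2})]\bigr) \otimes C(\Lambda_{\gamma}) \), and a Haar-state orthogonality computation produces the translates --- whereas you restrict to the normal subgroup \( G \subseteq G \rtimes \Lambda_{\gamma} \), observe that only diagonal matrix-unit terms survive there (correct, since \( \beta^{\Lambda}_{(g,e)} = \id \)), and invoke Mackey restriction of the induced representations together with the fact that a nonzero intertwiner over the big group restricts to a nonzero \( G \)-intertwiner between completely reducible representations. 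The two techniques encode the same structural facts, but yours is arguably more transparent about \emph{why} only translates of the \( u_{i} \) can appear, while the paper's character computation is shorter to write down rigorously (your version requires the double-coset bookkeeping you flag at the end). One further remark: your separate verification that \( l_{\widehat{G \rtimes \Lambda}} \) is a length function, via the semidirect-product fusion rules of Theorem~\ref{theo:5cf848969c56b608} and Lemma~\ref{lemm:ac666372e8d2b33a}, is logically redundant --- taking \( \mathscr{O}_{1} = \mathscr{O}_{2} = \mathscr{O}_{3} = \set*{e_{\Gamma}} \) in clause three of the affording family (where the twisted tensor product degenerates to the ordinary one) already yields subadditivity, and likewise for the other axioms; the paper exploits this silently and never checks the length-function axioms separately. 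The redundancy is harmless, but noting it would shorten your argument and make clear that only \( \Lambda \)-invariance of \( l_{\widehat{G}} \) enters well-definedness, with full \( \Gamma \)-invariance needed precisely and only for clause three over nontrivial orbits.
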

\begin{proof}
  The fact that \( l_{\widehat{G \rtimes \Lambda}} \) is well-defined (i.e.\
  does not depend on the choice of the distinguished representation parameter
  \( (u, V, v) \)) follows from Theorem~\ref{theo:f1781e3972950519} and the
  \( \Lambda \)-invariance of \( l_{\widehat{G}} \). We now show that the pair
  \( \pair*{l_{\Gamma}}{l_{\widehat{G \rtimes \Lambda}}} \) of length
  functions is matched.

  For all \( \mathscr{O} \in \orbit_{\beta^{\Lambda}} \), define
  \( l_{\mathscr{O}} : \irr_{\mathscr{O}}(G \rtimes \Lambda) \to
  \mathbb{R}_{\geq0} \) via the following procedure. Take any
  \( \gamma \in \mathscr{O} \), and let
  \( \varPhi_{\gamma} : \irr(G \rtimes \Lambda_{\gamma}) \to
  \irr_{\mathscr{O}}(G \rtimes \Lambda) \) be the canonical bijection as in
  Proposition~\ref{prop:d1bbc7a1a0561f1e}. To avoid over-complication of our
  notations, we often implicitly identify \( \irr(G \rtimes \Lambda_{\gamma}) \)
  with \( \irr_{\mathscr{O}}(G \rtimes \Lambda) \) via the bijection
  \( \varPhi_{\gamma} \), when doing so won't cause a risk of confusion. For all
  distinguished representation parameter \( (u, V, v) \) of
  \( G \rtimes \Lambda_{\gamma} \), let
  \begin{equation}
    \label{eq:4ec696cc821b4621}
    l_{\mathscr{O}}\left(\Psi_{\gamma}\bigl([(u, V, v)]\bigr)\right)
    := l_{\widehat{G}}([u]) + l_{\Gamma}(\gamma).
  \end{equation}
  By Theorem~\ref{theo:f1781e3972950519} again, we see that
  \eqref{eq:4ec696cc821b4621} yields a well-defined mapping
  \( l_{\mathscr{O}} : \irr_{\mathscr{O}}(G \rtimes \Lambda) \to
  \mathbb{R}_{\geq 0} \). It is clear that
  \( l_{\set*{e_{\Gamma}}} = l_{\widehat{G}} \) via the identification of
  \( \irr_{\set*{e_{\Gamma}}}(G \rtimes \Lambda) \) with
  \( \irr(G \rtimes \Lambda) \) by \( \varPhi_{e_{\Gamma}} \). Moreover, we have
  \( [\varepsilon_{\mathscr{O}}] = \Psi_{\gamma}([\varepsilon_{G},
  \varepsilon_{\Lambda_{\gamma}}, \varepsilon_{\Lambda_{\gamma}}]) \), so that
  \begin{displaymath}
    l_{\Gamma}(\gamma) = l_{\widehat{G}}([\varepsilon_{G}]) + l_{\Gamma}(\gamma)
    = l_{\mathscr{O}}([\varepsilon_{\mathscr{O}}]).
  \end{displaymath}
  Therefore, to finish the proof, it remains to show that
  \( {(l_{\mathscr{O}})}_{\mathscr{O} \in \orbit_{\beta^{\Lambda}}} \) is an
  affording family in the sense of Definition~\ref{defi:46e581e887d239e1}.  By
  definition, it is clear that
  \begin{displaymath}
    l_{\set*{e_{\Gamma}}}([\varepsilon_{G \rtimes \Lambda}])
    = l_{\widehat{G}}([\varepsilon_{G}]) + l_{\Gamma}(e_{\Gamma}) = 0.
  \end{displaymath}

  The condition \( l_{\mathscr{O}}([U]) = l_{\mathscr{O}^{-1}}([U^{\dag}]) \)
  can also be easily checked. Indeed, if \( [U] \) is given by
  \( \Psi_{\gamma}\bigl([(u, V, v)]\bigr) \), then \( [U^{\dag}] \) is given by
  \( \Psi_{\gamma^{-1}}\bigl([(\overline{u}, V^{c}, v^{c})]\bigr) \) (see
  Theorem~\ref{theo:82a7cf677459fdd2}). By \eqref{eq:4ec696cc821b4621}, we now
  have
  \begin{equation}
    \label{eq:27c9a2e0216276cd}
    l_{\mathscr{O}}([U])
    = l_{\widehat{G}}([u]) + l_{\Gamma}(\gamma)
    = l_{\widehat{G}}([\overline{u}]) + l_{\Gamma}(\gamma^{-1})
    = l_{\mathscr{O}^{-1}}([U^{\dag}]).
  \end{equation}

  By Definition~\ref{defi:46e581e887d239e1}, it remains only to establish the
  following claim.

  \noindent\underline{\textbf{Claim.}} For \( i = 1, 2, 3 \), let
  \( \mathscr{O}_{i} \in \orbit_{\beta^{\Lambda}} \), and
  \( [U_{i}] \in \irr_{\mathscr{O}_{i}}(G \rtimes \Lambda) \), with
  \begin{displaymath}
    U_{i} = \sum_{r,s \in \mathscr{O}_{i}} e_{r,s} \otimes u^{(i)}_{r,s}
  \end{displaymath}
  being an \( \mathscr{O}_{i} \)\nobreakdash-irreducible
  \( \mathscr{O}_{i} \)\nobreakdash-representation of \( G \rtimes \Lambda \) on
  \( \ell^{2}(\mathscr{O}_{i}) \otimes \mathscr{H}_{i} \). If
  \begin{equation}
    \label{eq:1303e2739229a227}
    \dim \morph_{G \rtimes \Lambda_{\gamma}}\left(u^{(3)}_{\gamma,\gamma}
      \vert_{G \rtimes \Lambda_{\gamma}},
      U_{1} \times_{\gamma} U_{2}\right) \neq
    0
  \end{equation}
  for some (hence for all) \( \gamma \in \mathscr{O}_{3} \), then
  \begin{equation}
    \label{eq:85951ab07a54d731}
    l_{\mathscr{O}_{3}}([U_{3}]) \leq l_{\mathscr{O}_{1}}([U_{1}]) +
    l_{\mathscr{O}_{2}}([U_{2}]).
  \end{equation}

  Before proving the claim, we remark that until now, only the
  \( \Lambda \)\nobreakdash-invariance of \( l_{\widehat{G}} \) is needed. The
  hypothesis that \( l_{\widehat{G}} \) is \( \Gamma \)\nobreakdash-invariant
  will play an important role in the proof of the claim as we will presently
  see.

  We now prove the claim. Suppose \( [U_{i}] \) is given by some
  \( \Psi_{\gamma_{i}}\bigl([(u_{i}, V_{i}, v_{i})]\bigr) \) and let
  \( \Lambda_{i} := \Lambda_{\gamma_{i}, [u_{i}]} \) for each \( i = 1, 2, 3 \).
  Define \( \mu \cdot u := u \circ \tau_{\mu} \) to be the left action of
  \( \Gamma \) on the class of finite dimensional unitary representation of
  \( G \), and let \( M(u) \) denote the vector space of matrix coefficients of
  \( u \). Using the character formulae for \( U_{1} \times_{\gamma} U_{2} \)
  and for \( \Psi_{\gamma_{3}}\bigl([(u_{3}, V_{3}, v_{3})]\bigr) \), as well as
  the construction of \( \Psi_{\gamma_{3}} \), we see that as elements in
  \( \pol(G) \otimes C(\Lambda_{\gamma}) \), we have
  \begin{equation}
    \label{eq:713ccdb4315575e8}
    \chi\left(u_{\gamma,\gamma}^{(3)} \vert_{G \rtimes \Lambda_{\gamma}}\right)
    \in \vect\left(\bigcup_{r \in \Lambda} M(r \cdot u_{3})\right)
    \otimes C(\Lambda_{\gamma}) \subseteq \pol(G) \otimes C(\Lambda_{\gamma}),
  \end{equation}
  and
  \begin{equation}
    \label{eq:5fecf32b1b10833f}
    \chi\left(U_{1} \times_{\gamma} U_{2}\right)
    \in \vect\Bigl(\bigl[\Gamma \cdot M(u_{1})\bigr] %
      \bigl[\Gamma \cdot M(u_{2})\bigr]\Bigr)
    \otimes C(\Lambda_{\gamma}),
  \end{equation}
  where
  \begin{displaymath}
    \forall i = 1,2, \qquad
    \Gamma \cdot M(u_{i}) :=
    \bigcup_{r_{i} \in \Gamma} M(r_{i} \cdot u_{i})
  \end{displaymath}
  and \( \bigl[\Gamma \cdot M(u_{1})\bigr] \bigl[\Gamma \cdot M(u_{2})\bigr] \)
  denotes product of form \( \varphi_{1} \varphi_{2} \in \pol(G) \) where
  \( \varphi_{i} \in \Gamma \cdot M(u_{i}) \) for \( i = 1, 2 \). By
  \eqref{eq:713ccdb4315575e8}, \eqref{eq:5fecf32b1b10833f} and a simple
  calculation using the Haar state on
  \( C(G) \otimes C(\Lambda_{\gamma}) = C(G \rtimes \Lambda_{\gamma}) \), it is
  clear that \eqref{eq:1303e2739229a227} implies the existence of
  \( r \in \Lambda \), \( r_{1}, r_{2} \in \Gamma \), such that
  \( M(r \cdot u_{3}) \) and
  \( M(r_{1} \cdot u_{1}) \cdot M(r_{2} \cdot u_{2}) \) are \emph{not}
  orthogonal with respect to the Haar measure on \( G \). Since the
  representation \( r \cdot u_{3} \) of \( G \) is irreducible, this forces that
  \begin{displaymath}
    \dim \morph_{G}\bigl(r \cdot u_{3},
    (r_{1} \cdot u_{1}) \times (r_{2} \cdot u_{2})\bigr) \ne 0.
  \end{displaymath}
  Hence
  \begin{equation}
    \label{eq:748477faf46ef21a}
    r \cdot [u_{3}] \subseteq (r_{1} \cdot [u_{1}]) \otimes (r_{2} \cdot
    [u_{2}]).
  \end{equation}
  Since \( l_{\widehat{G}} \) is a \( \Gamma \)\nobreakdash-invariant length
  function, by \eqref{eq:748477faf46ef21a}, we have
  \begin{equation}
    \label{eq:ee11737e2cae5c84}
    l_{\widehat{G}}([u_{3}]) = l_{\widehat{G}}(r \cdot [u_{3}])
    \leq l_{\widehat{G}}(r_{1} \cdot [u_{1}]) %
    + l_{\widehat{G}}(r_{2} \cdot [u_{2}]) %
    = l_{\widehat{G}}([u_{1}]) + l_{\widehat{G}}([u_{2}]).
  \end{equation}

  On the other hand, \eqref{eq:1303e2739229a227} also implies that
  \( \gamma_{3} \in \mathscr{O}_{3} %
  \subseteq \mathscr{O}_{1} \mathscr{O}_{2} \), so there is
  \( s_{i} \in \mathscr{O}_{i} \), \( i = 1,2 \), such that
  \( s_{1}s_{2} = \gamma_{3} \). Using the fact that \( l_{\Gamma} \) is a
  \( \beta^{\Lambda} \)-invariant length function, we have
  \begin{equation}
    \label{eq:07e48a754bc0c9c4}
    l_{\Gamma}(\gamma_{3}) = l_{\Gamma}(s_{1}s_{2})
    \leq l_{\Gamma}(s_{1}) + l_{\Gamma}(s_{2})
    = l_{\Gamma}(\gamma_{1}) + l_{\Gamma}(\gamma_{2}).
  \end{equation}

  By \eqref{eq:ee11737e2cae5c84}, \eqref{eq:07e48a754bc0c9c4} and
  \eqref{eq:4ec696cc821b4621} again, we have
  \begin{align*}
    l_{\mathscr{O}_{3}}([U_{3}]) %
    &= l_{\widehat{G}}([u_{3}]) + l_{\Gamma}(\gamma_{3}) \\
    &\leq l_{\widehat{G}}([u_{1}]) + l_{\widehat{G}}([u_{2}]) %
      + l_{\Gamma}(\gamma_{1}) + l_{\Gamma}(\gamma_{2}) \\
    &= l_{\mathscr{O}_{1}}([U_{1}]) + l_{\mathscr{O}_{2}}([U_{2}]).
  \end{align*}
  This finishes the proof of the claim, and hence the lemma.
\end{proof}

Now Theorem~\ref{theo:69214233f77b0115} follows from
Lemma~\ref{lemm:57fb1888aba4d546}, Theorem~\ref{theo:ad139c52be8424d9} and
Theorem~\ref{theo:66c73d7c75f29067}.

\begin{rema}
  \label{rema:54472d6e9b8e5557}
  It is however, unknown to the author that whether the polynomial growth
  (resp.\ \( (RD) \)) of the dual
  \( \widehat{\Gamma \bowtie (G \rtimes \Lambda)} \) implies the existence of a
  \emph{\( \Gamma \)-invariant} length function \( l_{\widehat{G}} \) on
  \( \widehat{G} \) witnessing the polynomial growth (resp.\ \( (RD) \)) of
  \( \widehat{G} \). Later we will show that if the composition
  \( \Gamma \xrightarrow{\tau} \aut(G) \to \out(G) \) has finite image, then the
  converse of Theorem~\ref{theo:69214233f77b0115} also holds
  (Theorem~\ref{theo:54211248e165a397}).
\end{rema}

\section{Invariance of length functions and proof of
  Theorem~\ref{theo:54211248e165a397}}
\label{sec:ef1c8a06826abe0a}

In this section, we partially treat the difficulty of the technical assumption
on the \( \Gamma \)-invariance of the length function \( l_{\widehat{G}} \) on
\( \widehat{\Gamma} \) that witnesses the polynomial growth or \( (RD) \) of
\( \widehat{G} \), as presented in Theorem~\ref{theo:69214233f77b0115}. The
results here will be used in \S~\ref{sec:c9560fd86a3f44db} in which we give some
concrete examples of bicrossed products whose dual has \( (RD) \) but does
\emph{not} have polynomial growth. We also point out here that the examples
given in \S~\ref{sec:bd435170e9c7be48} do \emph{not} fit into this framework,
thus we only have a partial understanding of the situation. In particular, the
tools developed here leads to a natural proof of
Theorem~\ref{theo:54211248e165a397}.

We begin by considering a technical lemma on the Fourier transform and the
Sobolev-\( 0 \)-norm in the context of compact quantum groups of Kac type.

\begin{lemm}
  \label{lemm:05429b5c3076ea57}
  Let \( \mathbb{H} \) be a compact quantum group of Kac type. Suppose
  \( \theta : C(\mathbb{H}) \to C(\mathbb{H}) \) is an automorphism of
  \( C^{\ast} \)\nobreakdash-algebras that intertwines the comultiplication
  \( \Delta \) of \( \mathbb{H} \) (i.e.\ \( \theta \) is an automorphism of the
  quantum group \( \mathbb{H} \)). Then there exists an automorphism
  \( \widehat{\theta} \) of the involutive algebra
  \( c_{c}(\widehat{\mathbb{H}}) \), such that
  \begin{equation}
    \label{eq:80c08c00c6248b0a}
    \forall a \in c_{c}(\widehat{\mathbb{H}}), \quad
    \mathcal{F}_{\mathbb{H}}\left(\widehat{\theta}(a)\right) %
    = \theta \bigl(\mathcal{F}_{\mathbb{H}}(a)\bigr)
    \quad \text{ and } \quad %
    \norm*{\widehat{\theta}(a)}_{\mathbb{H},0} = \norm*{a}_{\mathbb{H},0}.
  \end{equation}
\end{lemm}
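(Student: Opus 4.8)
The plan is to build \( \widehat{\theta} \) by hand from the action of \( \theta \) on representations rather than transporting it opaquely through \( \mathcal{F}_{\mathbb{H}} \); morally \( \widehat{\theta} \) should be \( \mathcal{F}_{\mathbb{H}}^{-1} \circ \theta \circ \mathcal{F}_{\mathbb{H}} \), but an explicit blockwise description makes the \( \ast \)-algebra and norm properties transparent. First I would record the standard fact that a quantum group automorphism permutes the irreducibles: since \( \theta \) is a unital \( \ast \)-automorphism intertwining \( \Delta \), for each \( x \in \irr(\mathbb{H}) \) with fixed representative \( u^{x} \in B(H_{x}) \otimes C(\mathbb{H}) \), the unitary \( (\id \otimes \theta)(u^{x}) \) is again an irreducible representation of \( \mathbb{H} \) on \( H_{x} \); let \( \sigma(x) \in \irr(\mathbb{H}) \) denote its class. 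Applying the same reasoning to \( \theta^{-1} \) shows that \( \sigma \) is a dimension-preserving bijection of \( \irr(\mathbb{H}) \). Fix, for every \( x \), a unitary intertwiner \( T_{x} \in B(H_{x}, H_{\sigma(x)}) \) with \( (\id \otimes \theta)(u^{x}) = (T_{x}^{\ast} \otimes 1)\, u^{\sigma(x)}\, (T_{x} \otimes 1) \); by irreducibility \( T_{x} \) is unique up to a phase.

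Next I would define \( \widehat{\theta} \) blockwise. Writing \( a = {(a_{x})}_{x} \) with \( a_{x} = a p_{x} \in B(H_{x}) \), I declare the \( \sigma(x) \)-block of \( \widehat{\theta}(a) \) to be \( T_{x} a_{x} T_{x}^{\ast} \in B(H_{\sigma(x)}) \). Since \( T_{x} a_{x} T_{x}^{\ast} \) is insensitive to the phase ambiguity in \( T_{x} \), this is well defined, is finitely supported whenever \( a \) is, and is manifestly a \( \ast \)-algebra automorphism of \( c_{c}(\widehat{\mathbb{H}}) \): it merely permutes the blocks via \( \sigma \) and conjugates each by a unitary, so it respects blockwise multiplication and adjoints, with inverse assembled from \( \sigma^{-1} \) and \( T_{x}^{\ast} \). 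The Sobolev-\( 0 \) isometry is then immediate: because \( \dim \sigma(x) = \dim x \) and \( \tr_{H_{\sigma(x)}}\bigl((T_{x} a_{x} T_{x}^{\ast})^{\ast}(T_{x} a_{x} T_{x}^{\ast})\bigr) = \tr_{H_{x}}(a_{x}^{\ast} a_{x}) \) by \( T_{x}^{\ast} T_{x} = \id_{H_{x}} \) and cyclicity of the trace, the defining sum for \( \norm*{\widehat{\theta}(a)}_{\mathbb{H},0}^{2} \) is a term-by-term reindexing of that for \( \norm*{a}_{\mathbb{H},0}^{2} \).

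The substantive step is the Fourier identity. Starting from \( \mathcal{F}_{\mathbb{H}}(a) = \sum_{x} (\dim x)\,(\tr_{H_{x}} \otimes \id)\bigl(u^{x}(a_{x} \otimes 1)\bigr) \), I would push \( \theta \) inside using \( \theta(1) = 1 \) and the slice relation \( \theta\bigl[(\tr_{H_{x}} \otimes \id)(X)\bigr] = (\tr_{H_{x}} \otimes \id)\bigl[(\id \otimes \theta)(X)\bigr] \), obtaining \( \theta(\mathcal{F}_{\mathbb{H}}(a)) = \sum_{x} (\dim x)\,(\tr_{H_{x}} \otimes \id)\bigl[(\id \otimes \theta)(u^{x})(a_{x} \otimes 1)\bigr] \). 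Substituting the intertwining relation for \( (\id \otimes \theta)(u^{x}) \) and invoking the partial-trace cyclicity identity \( (\tr_{H_{x}} \otimes \id)\bigl[(T_{x}^{\ast} \otimes 1)\, Y\, (T_{x} a_{x} \otimes 1)\bigr] = (\tr_{H_{\sigma(x)}} \otimes \id)\bigl[Y\, (T_{x} a_{x} T_{x}^{\ast} \otimes 1)\bigr] \) with \( Y = u^{\sigma(x)} \) converts the \( x \)-summand into the \( \sigma(x) \)-summand of \( \mathcal{F}_{\mathbb{H}}(\widehat{\theta}(a)) \); reindexing along the bijection \( \sigma \) (and using \( \dim x = \dim \sigma(x) \)) yields exactly \( \mathcal{F}_{\mathbb{H}}(\widehat{\theta}(a)) = \theta(\mathcal{F}_{\mathbb{H}}(a)) \). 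I expect this last partial-trace manipulation, in which \( T_{x} \) moves operators between the two distinct blocks \( H_{x} \) and \( H_{\sigma(x)} \), to be the one place demanding genuine care; everything else is bookkeeping.
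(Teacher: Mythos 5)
Your proposal is correct and follows essentially the same route as the paper: both construct \( \widehat{\theta} \) blockwise by letting \( \theta \) act on representations via \( (\id \otimes \theta)(u^{x}) \), inducing a dimension-preserving bijection of \( \irr(\mathbb{H}) \), choosing unitary intertwiners \( T_{x} \) (unique up to phase), conjugating each block, and verifying the Fourier identity and Sobolev-\( 0 \) isometry by the slice-map/automorphism commutation and cyclicity of the partial trace. The only differences are cosmetic: your \( T_{x} \) points in the opposite direction from the paper's, and your Fourier computation starts from \( \theta(\mathcal{F}_{\mathbb{H}}(a)) \) and pushes forward rather than starting from \( \mathcal{F}_{\mathbb{H}}(\widehat{\theta}(a)) \) and pulling back.
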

\begin{proof}
  Choose a complete set of representatives
  \( \set*{u^{x} \given x \in \irr(\mathbb{H})} \) for \( \irr(\mathbb{H}) \),
  and denote the finite dimensional Hilbert space underlying the unitary
  representation \( u^{x} \) by \( \mathscr{H}_{x} \), so that
  \begin{displaymath}
    c_{c}\bigl(\widehat{\mathbb{H}}\bigr) %
    = \bigoplus_{x \in \irr(\mathbb{H})}^{\alg} %
    \mathcal{B}\bigl(\mathscr{H}_{x}\bigr).
  \end{displaymath}
  For each finite dimensional unitary representation
  \( u \in \mathcal{B}(\mathscr{H}) \otimes \pol(\mathbb{H}) \) of
  \( \mathbb{H} \) on \( \mathscr{H} \), since \( \theta \) is an automorphism
  of \( \mathbb{H} \), the unitary operator
  \begin{equation}
    \label{eq:6a6c1e427ca97336}
    \theta_{\ast}(u) := (\id \otimes \theta)(u)
    \in \mathcal{B}(\mathscr{H}) \otimes \pol(\mathbb{H})
  \end{equation}
  remains a unitary representation of \( \mathbb{H} \) on the same space
  \( \mathscr{H} \). It is clear that \( \theta_{\ast} \) also passes to a
  bijection of the set \( \irr(\mathbb{H}) \) to itself, which we still denote
  by \( \theta_{\ast} \) by abuse of notation, via
  \( \theta_{\ast}([u]) = [\theta_{\ast}(u)] \). In particular, for each
  \( x \in \irr(\mathbb{H}) \), we have
  \( \left[u^{\theta_{\ast}(x)}\right] = \theta_{\ast}(x) =
  \left[\theta_{\ast}(u^{x})\right] \), thus there exists a \emph{unitary}
  \begin{displaymath}
    T_{x} \in \morph_{\mathbb{H}} \left( %
      u^{\theta_{\ast}(x)}, \theta_{\ast}(u^{x})\right) %
    \subseteq \mathcal{B}\left(\mathscr{H}_{\theta_{\ast}(x)}, %
      \mathscr{H}_{x}\right),
  \end{displaymath}
  which is uniquely determined up to a scalar in \( \mathbb{T} \).

  Take any
  \begin{equation}
    \label{eq:132832ce7b63ae8a}
    a = {(a_{x})}_{x \in \irr(\mathbb{H})}
    = \sum_{x \in \irr(\mathbb{H})} a_{x}
    \in c_{c}(\widehat{\mathbb{H}}),
  \end{equation}
  where the sum is finite (meaning all but finitely many
  \( a_{x} \in \mathcal{B}(\mathscr{H}_{x}) \) is \( 0 \)). For each
  \( x \in \irr(\mathbb{H}) \), we set
  \begin{equation}
    \label{eq:404f4e5c5d1b9686}
    b_{\theta_{\ast}(x)} := T_{x}^{\ast} a_{x} T_{x}
    \in \mathcal{B}\left(\mathscr{H}_{\theta_{\ast}(x)}\right).
  \end{equation}
  Then
  \begin{equation}
    \label{eq:afef6dfac1d7f171}
    \dim\bigl(\theta_{\ast}(x)\bigr) = \dim x.
  \end{equation}
  By the choice of \( T_{x} \), we have
  \begin{equation}
    \label{eq:4f6a7f0376f937b9}
    \begin{split}
      & \leadmathskip %
      \left( \tr_{\mathscr{H}_{\theta_{\ast}(x)}} \otimes \id \right) %
      \Bigl( u^{\theta_{\ast}(x)}(b_{\theta_{\ast}(x)} \otimes 1) \Bigr) \\
      &= \left( \tr_{\mathscr{H}_{\theta_{\ast}(x)}} \otimes \id \right) %
      \Bigl( %
      (T_{x}^{\ast} \otimes 1) %
      \bigl[ [\theta_{\ast}(u^{x})](a_{x} \otimes 1)\bigr] %
      (T_{x} \otimes 1) %
      \Bigr) \\
      &= \left( \tr_{\mathscr{H}_{x}} \otimes \id \right) %
      \Bigl( [\theta_{\ast}(u^{x})](a_{x} \otimes 1) \Bigr) \\
      &= \theta \left[ %
        \left( \tr_{\mathscr{H}_{x}} \otimes \id \right) %
        \Bigl( [u^{x}(a_{x} \otimes 1) \Bigr) %
      \right],
    \end{split}
  \end{equation}
  and
  \begin{equation}
    \label{eq:b478a88896cd4329}
    \begin{split}
      & \leadmathskip %
      \left( \tr_{\mathscr{H}_{\theta_{\ast}(x)}} \otimes \id \right) %
      \bigl( b_{\theta_{\ast}(x)}^{\ast}b_{\theta_{\ast}(x)} \bigr) \\
      &= \left( \tr_{\mathscr{H}_{\theta_{\ast}(x)}} \otimes \id \right) %
      \bigl( T_{x}^{\ast} a_{x}^{\ast}a_{x}T_{x} \bigr) \\
      &= \left( \tr_{\mathscr{H}_{\theta_{\ast}(x)}} \otimes \id \right) %
      (a_{x}^{\ast}a_{x}).
    \end{split}
  \end{equation}
  We now define
  \begin{equation}
    \label{eq:8a14a807a0796f3a}
    \widehat{\theta}(a) := \sum_{x \in \irr(\mathbb{H})} b_{\theta_{\ast}(x)}
  \end{equation}
  Since \( \theta_{\ast} : \irr(\mathbb{H}) \to \irr(\mathbb{H}) \) is a
  bijection, it is clear that \eqref{eq:8a14a807a0796f3a} defines an
  automorphism \( \widehat{\theta} \) of the involutive algebra
  \( c_{c}(\widehat{\mathbb{H}}) \). Finally, \eqref{eq:80c08c00c6248b0a}
  follows from \eqref{eq:4f6a7f0376f937b9} and \eqref{eq:b478a88896cd4329}.
\end{proof}
\begin{rema}
  \label{rema:c92276a65161534d}
  Lemma~\ref{lemm:05429b5c3076ea57} also applies to non-Kac type
  \( \mathbb{H} \) with almost the same proof, with the caveats that the Fourier
  transform and the Sobolev norms need to be adjusted using quantum dimensions
  of representations, which is not needed for our purposes hence not introduced
  here (see \cite{MR2329000} or \cite{MR3448333} the discussion of non-Kac type
  Fourier transforms and Sobolev norms).
\end{rema}

\begin{prop}
  \label{prop:461c0747dedf7cfc}
  Let \( \mathbb{H} = \bigl(C(\mathbb{H}), \Delta\bigr) \) be a compact quantum
  group of Kac type. Suppose \( \Theta \) is a \emph{finite} subgroup of
  \( \aut\bigl(C(\mathbb{H}), \Delta\bigr) \). The following are equivalent.
  \begin{enumerate}
  \item \label{item:d69e6bf8551d304b} There exists a length function \( l \) on
    \( \widehat{\mathbb{H}} \) and \( P(X) \in \mathbb{R}[X] \), such that
    \begin{equation}
      \label{eq:b07f86faf4f8f4ca}
      \forall k \in \mathbb{N}, \quad a \in Q_{l,k} c_{c}(\widehat{\mathbb{H}})
      \implies \norm*{\mathcal{F}_{\mathbb{H}}(a)}
      \leq P(k) \norm*{a}_{\mathbb{H},0},
    \end{equation}
    where
    \( Q_{l,n}:= \sum_{x \in \irr(\mathbb{H},\, l(x) < n+1)}p_{x} \in
    \ell^{\infty}(\widehat{\mathbb{H}}) \).
  \item \label{item:9784581b8a95616b} There exists a
    \emph{\( \Theta \)-invariant} length function \( l_{\Theta} \) on
    \( \widehat{\mathbb{H}} \) and \( Q(X) \in \mathbb{R}[X] \), such that
    \begin{equation}
      \label{eq:0cf9d8dd9d8615c1}
      \forall k \in \mathbb{N}, \quad %
      a \in Q_{l_{\Theta},k} c_{c}(\widehat{\mathbb{H}})
      \implies \norm*{\mathcal{F}_{\mathbb{H}}(a)}
      \leq Q(k) \norm*{a}_{\mathbb{H},0}.
    \end{equation}
  \end{enumerate}
\end{prop}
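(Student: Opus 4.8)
The implication \ref{item:9784581b8a95616b}$\Rightarrow$\ref{item:d69e6bf8551d304b} is immediate, since a \( \Theta \)-invariant length function is in particular a length function: take \( l = l_{\Theta} \) and \( P = Q \). Hence the entire content lies in \ref{item:d69e6bf8551d304b}$\Rightarrow$\ref{item:9784581b8a95616b}, and the plan is to symmetrize the given length function \( l \) over the finite group \( \Theta \).

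The structural input I would use is already present in the proof of Lemma~\ref{lemm:05429b5c3076ea57}: each \( \theta \in \Theta \) induces a bijection \( \theta_{\ast} : \irr(\mathbb{H}) \to \irr(\mathbb{H}) \), \( [u] \mapsto [(\id \otimes \theta)(u)] \), and \( \theta \mapsto \theta_{\ast} \) is a group action of \( \Theta \) on \( \irr(\mathbb{H}) \). Because \( \theta \) is an automorphism of the Hopf-\( \ast \)-algebra \( \pol(\mathbb{H}) \) — so that it intertwines \( \Delta \), preserves the counit, and commutes with the antipode — the permutation \( \theta_{\ast} \) respects the monoidal structure and the duality: \( \theta_{\ast}[\varepsilon_{\mathbb{H}}] = [\varepsilon_{\mathbb{H}}] \), \( \theta_{\ast}(\overline{x}) = \overline{\theta_{\ast}(x)} \), and \( z \subseteq x \otimes y \) implies \( \theta_{\ast}(z) \subseteq \theta_{\ast}(x) \otimes \theta_{\ast}(y) \). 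I would isolate these three facts, as they are the only properties of \( \theta \) that the argument needs.

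Next I would set \( l_{\Theta}(x) := \max_{\theta \in \Theta} l(\theta_{\ast} x) \), a well-defined finite maximum since \( \Theta \) is finite. The three facts above, together with the axioms for \( l \), make it routine to verify that \( l_{\Theta} \) is again a length function: it vanishes at \( [\varepsilon_{\mathbb{H}}] \) and is conjugation-invariant because \( \theta_{\ast} \) fixes \( [\varepsilon_{\mathbb{H}}] \) and commutes with the bar operation, while subadditivity follows from \( l(\theta_{\ast} z) \le l(\theta_{\ast} x) + l(\theta_{\ast} y) \le l_{\Theta}(x) + l_{\Theta}(y) \) for each \( \theta \), upon taking the maximum over \( \theta \). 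Its \( \Theta \)-invariance, \( l_{\Theta}(\sigma_{\ast} x) = l_{\Theta}(x) \) for \( \sigma \in \Theta \), is immediate after reindexing the maximum by \( \theta \mapsto \theta\sigma \).

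It then remains to transport the estimate, and here \( l_{\Theta} \ge l \) pointwise (take \( \theta = \id \)) does the job directly: one has \( \set*{x \given l_{\Theta}(x) < k+1} \subseteq \set*{x \given l(x) < k+1} \), so the central projections obey \( Q_{l_{\Theta},k} \le Q_{l,k} \) and \( Q_{l_{\Theta},k}\,c_{c}(\widehat{\mathbb{H}}) \subseteq Q_{l,k}\,c_{c}(\widehat{\mathbb{H}}) \). Any \( a \in Q_{l_{\Theta},k}\,c_{c}(\widehat{\mathbb{H}}) \) therefore satisfies the hypothesis of \eqref{eq:b07f86faf4f8f4ca}, giving \( \norm*{\mathcal{F}_{\mathbb{H}}(a)} \le P(k)\norm*{a}_{\mathbb{H},0} \), and \eqref{eq:0cf9d8dd9d8615c1} holds with \( Q = P \). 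The only genuinely delicate point is the second paragraph — that \( \theta_{\ast} \) is a monoidal, duality-preserving permutation of \( \irr(\mathbb{H}) \), which rests on \( \theta \) being a bona fide quantum-group automorphism rather than a mere \( C^{\ast} \)-automorphism; everything after that is bookkeeping. If one preferred an argument that manifestly uses Lemma~\ref{lemm:05429b5c3076ea57}, the identities \( \mathcal{F}_{\mathbb{H}}(\widehat{\theta}(a)) = \theta(\mathcal{F}_{\mathbb{H}}(a)) \) and \( \norm*{\widehat{\theta}(a)}_{\mathbb{H},0} = \norm*{a}_{\mathbb{H},0} \), combined with the isometry of \( \theta \) on \( C(\mathbb{H}) \), let one instead bound \( \norm*{\mathcal{F}_{\mathbb{H}}(a)} = \norm*{\mathcal{F}_{\mathbb{H}}(\widehat{\theta}(a))} \) by applying \eqref{eq:b07f86faf4f8f4ca} to each \( \widehat{\theta}(a) \), whose support \( \theta_{\ast}(\supp a) \) is still \( l_{\Theta} \)-small; but the support comparison above already makes this detour unnecessary.
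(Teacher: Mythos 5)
Your proof is correct, but it takes a genuinely different --- and in fact leaner --- route than the paper's. The paper symmetrizes by \emph{averaging}: it sets \( l_{\Theta} := \frac{1}{\abs*{\Theta}}\sum_{i} l \circ (\theta_{i})_{\ast} \) as in \eqref{eq:c49cb9a7f08b2823}. Since this average does not dominate \( l \) pointwise, the paper must then run a pigeonhole inclusion \eqref{eq:59b5584636a357d8}, partition the support of \( a \) into pieces \( a_{i} \) lying under \( Q_{l_{i},k} \), invoke Lemma~\ref{lemm:05429b5c3076ea57} (the dual automorphism \( \widehat{\theta} \) intertwining \( \mathcal{F}_{\mathbb{H}} \) and preserving \( \norm*{\cdot}_{\mathbb{H},0} \)) to estimate each piece, and finish with an \( \ell^{1} \)--\( \ell^{2} \) recombination \eqref{eq:0c41ec99d9f7ac79} that costs a factor \( \sqrt{\abs*{\Theta}} \), ending with \( Q = \sqrt{\abs*{\Theta}}\,P \). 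Your max-symmetrization \( l_{\Theta} := \max_{\theta \in \Theta} l \circ \theta_{\ast} \) sidesteps all of this: because \( \id \in \Theta \), one has \( l_{\Theta} \geq l \) pointwise, hence \( Q_{l_{\Theta},k}\,c_{c}(\widehat{\mathbb{H}}) \subseteq Q_{l,k}\,c_{c}(\widehat{\mathbb{H}}) \), and \eqref{eq:b07f86faf4f8f4ca} applies verbatim with \( Q = P \) --- no decomposition, no use of Lemma~\ref{lemm:05429b5c3076ea57} in the estimate, and a better polynomial. The structural facts you isolate (that \( \theta_{\ast} \) permutes \( \irr(\mathbb{H}) \), fixes \( [\varepsilon_{\mathbb{H}}] \), commutes with conjugation, and preserves containment in tensor products) are exactly what the paper also needs, tacitly, for each \( l \circ (\theta_{i})_{\ast} \) to be a length function, so you use strictly less machinery; and since \( \max_{\theta} l\circ\theta_{\ast} \) and the average are comparable within a factor \( \abs*{\Theta} \), nothing is lost by choosing the maximum. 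The one point worth stating explicitly in your write-up is the reindexing identity \( (\theta\sigma)_{\ast} = \theta_{\ast}\sigma_{\ast} \), which is what makes the \( \Theta \)-invariance argument legitimate; your sketch asserts it correctly.
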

\begin{proof}
  Obviously \ref{item:9784581b8a95616b} implies \ref{item:d69e6bf8551d304b}.

  Now suppose \ref{item:d69e6bf8551d304b} holds and let's prove
  \ref{item:9784581b8a95616b}. Let \( n = \abs*{\Theta} \) and suppose
  \( \theta_{1}, \ldots, \theta_{n} \) form an enumeration of all elements of
  \( \Theta \). Let \( l_{i} \) denote the length function
  \( l \circ {(\theta_{i})}_{\ast} \) on \( \widehat{\mathbb{H}} \) (see the
  discussion after \eqref{eq:6a6c1e427ca97336} in the proof of
  Lemma~\ref{lemm:05429b5c3076ea57}). Put
  \begin{equation}
    \label{eq:c49cb9a7f08b2823}
    l_{\Theta} := \frac{1}{\abs*{\Theta}} \sum_{i=1}^{n} l_{i},
  \end{equation}
  then it is clear that \( l_{\Theta} \) is a \( \Theta \)-invariant length
  function on \( \widehat{\mathbb{H}} \). For each \( k \in \mathbb{N} \),
  define
  \begin{equation}
    \label{eq:37a383a675accfa9}
    F_{\Theta, k} := \set*{x \in \irr(\mathbb{H}) \given l_{\Theta}(x) < k+1},
  \end{equation}
  and for \( i = 1, \ldots, n \), put
  \begin{equation}
    \label{eq:59d4796a788393f8}
    F_{i, k}  := \set*{x \in \irr(\mathbb{H}) \given
      l_{\theta_{i}}(x) < k+1},
  \end{equation}
  By \eqref{eq:b07f86faf4f8f4ca}, \eqref{eq:59d4796a788393f8} and
  \eqref{eq:c49cb9a7f08b2823}, we have
  \begin{equation}
    \label{eq:59b5584636a357d8}
    F_{\Theta,k} \subseteq \bigcup_{i=1}^{n} F_{i, k}.
  \end{equation}
  Define
  \begin{equation}
    \label{eq:1f93cf679787adc4}
    \begin{split}
      \xi : F_{\Theta,k} & \to \set*{1, \ldots, n} \\
      x & \mapsto \inf\set*{i \given x \in F_{i,k}}.
    \end{split}
  \end{equation}
  Note that \eqref{eq:59b5584636a357d8} guarantees that \( \xi \) is
  well-defined.

  We now prove \eqref{eq:0cf9d8dd9d8615c1} holds for some suitable polynomial
  \( Q(X) \in \mathbb{R}[X] \), which will finish the proof.  Since
  \( a \in Q_{l_{\Theta},k} c_{c}(\widehat{\mathbb{H}}) \), there exists a
  finite subset \( F \) of \( F_{\Theta, k} \), such that
  \begin{equation}
    \label{eq:4dfd63ec4cce61b0}
    a = \sum_{x \in F} a_{x} = \sum_{i=1}^{n} a_{i},
  \end{equation}
  where for each \( i \),
  \begin{equation}
    \label{eq:b6658e966b3555e9}
    a_{i} := \sum_{x \in F \cap \xi^{-1}(i)} a_{x} %
    \in Q_{l_{i},k}.
  \end{equation}
  By Lemma~\ref{lemm:05429b5c3076ea57} and \ref{item:d69e6bf8551d304b}, we have
  \begin{equation}
    \label{eq:f9682190ba1bda19}
    \forall i = 1, \ldots, n, \quad
    \norm*{\mathcal{F}_{\mathbb{H}}(a_{i})}
    \leq P(k) \norm*{a_{i}}_{\mathbb{H},0},
  \end{equation}
  hence
  \begin{equation}
    \label{eq:0c41ec99d9f7ac79}
    \begin{split}
      \norm*{\mathcal{F}_{\mathbb{H}}(a)}^{2} %
      &\leq {\left(\sum_{i=1}^{n} %
          \norm*{\mathcal{F}_{\mathbb{H}}(a_{i})} \right)}^{2} %
      \leq n \left(\sum_{i=1}^{n} \norm*{\mathcal{F}_{\mathbb{H}}(a_{i})}^{2} %
      \right) \\
      &\leq n {[P(k)]}^{2} \left( %
        \sum_{i=1}^{n}\norm*{a_{i}}_{\mathbb{H},0}^{2} \right)
      = \abs*{\Theta} {[P(k)]}^{2} \norm*{a}_{\mathbb{H},0}^{2}.
    \end{split}
  \end{equation}
  Thus posing \( Q(X) = \sqrt{\abs*{\Theta}} P(X) \in \mathbb{R}[X] \), we
  have~\eqref{eq:0cf9d8dd9d8615c1}.
\end{proof}

\begin{coro}
  \label{coro:f15d5d55ec591c2a}
  The following are equivalent:
  \begin{enumerate}
  \item \label{item:d4b5b518a3d64ece} \( \Gamma \) has polynomial growth (resp.\
    \( (RD) \));
  \item \label{item:900be9aafe129e4d} there exists a
    \( \beta^{\Lambda} \)-invariant length function \( l_{\Gamma} \) on
    \( \Gamma \), such that \( \pair*{\Gamma}{l_{\Gamma}} \) has polynomial
    growth (resp.\ \( (RD) \)).
  \end{enumerate}
\end{coro}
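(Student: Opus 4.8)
The plan is to read this corollary as the special case of Proposition~\ref{prop:461c0747dedf7cfc} in which $\mathbb{H}$ is the cocommutative compact quantum group $\widehat{\Gamma}$, so that $C(\mathbb{H}) = C^{\ast}_{r}(\Gamma)$ and $\widehat{\mathbb{H}} = \Gamma$. Under this identification $\irr(\mathbb{H})$ is $\Gamma$ itself, every irreducible corepresentation is one-dimensional, a length function on $\widehat{\mathbb{H}}$ in the sense of \S\ref{sec:00df8c84fb2742bf} is precisely a classical length function on the group $\Gamma$, and the pair $\pair*{\Gamma}{l}$ having polynomial growth (resp.\ $(RD)$) is the usual group-theoretic notion. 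With this dictionary the implication \ref{item:900be9aafe129e4d} $\Rightarrow$ \ref{item:d4b5b518a3d64ece} is immediate, a $\beta^{\Lambda}$-invariant witnessing length function being in particular a witnessing length function.

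For \ref{item:d4b5b518a3d64ece} $\Rightarrow$ \ref{item:900be9aafe129e4d} I would first encode the $\beta^{\Lambda}$-action as a finite group of quantum-group automorphisms. For each $r \in \Lambda$ the conjugation $\gamma \mapsto r^{-1}\gamma r$ is an automorphism of $\Gamma$ and induces a $\ast$-automorphism $\theta_{r}$ of $C^{\ast}_{r}(\Gamma)$ with $\theta_{r}(\lambda_{\gamma}) = \lambda_{r^{-1}\gamma r}$; since $\Delta(\lambda_{\gamma}) = \lambda_{\gamma} \otimes \lambda_{\gamma}$ each $\theta_{r}$ intertwines $\Delta$, so $\Theta := \set*{\theta_{r} \given r \in \Lambda}$ is a finite subgroup of $\aut\bigl(C(\mathbb{H}), \Delta\bigr)$ (finite as a quotient of the finite group $\Lambda$). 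Computing on the one-dimensional corepresentations shows that $(\theta_{r})_{\ast}$ acts on $\irr(\mathbb{H}) = \Gamma$ by $\gamma \mapsto r^{-1}\gamma r$, so that a length function on $\Gamma$ is $\Theta$-invariant exactly when it is $\beta^{\Lambda}$-invariant.

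With $\Theta$ in place, the $(RD)$ case is a direct appeal to Proposition~\ref{prop:461c0747dedf7cfc}: starting from any length function witnessing $(RD)$ of $\Gamma$ one obtains the Fourier-transform estimate \ref{item:d69e6bf8551d304b}, and the proposition upgrades it to the same estimate \ref{item:9784581b8a95616b} for a $\Theta$-invariant, hence $\beta^{\Lambda}$-invariant, length function $l_{\Theta}$; here one only notes that for this $\mathbb{H}$ the $C(\mathbb{H})$-norm formulation of $(RD)$ in \S\ref{sec:00df8c84fb2742bf} is, through the Fourier transform, exactly the estimate of \ref{item:d69e6bf8551d304b}, up to the routine passage between the shell projections $q_{k}$ and the cumulative projections $Q_{l,k}$. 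Since Proposition~\ref{prop:461c0747dedf7cfc} is stated only for this Fourier-transform estimate, I would treat polynomial growth by a short parallel averaging argument: with $l_{\Theta} := \abs*{\Theta}^{-1}\sum_{\theta \in \Theta} l \circ \theta_{\ast}$, each $l \circ \theta_{\ast}$ has the same growth as $l$ because $\theta_{\ast}$ is a dimension-preserving bijection of $\irr(\mathbb{H})$; and $l_{\Theta}(x) < k+1$ forces every summand to satisfy $(l \circ \theta_{\ast})(x) < \abs*{\Theta}(k+1)$, so a shell of $l_{\Theta}$ lies inside a ball of radius $\abs*{\Theta}(k+1)$ for a single $l \circ \theta_{\ast}$, over which $\sum (\dim x)^{2}$ is polynomial in $k$ by the polynomial growth of $\pair*{\Gamma}{l}$.

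The step I expect to carry the actual content is the bookkeeping rather than anything deep: pinning down that the $\beta^{\Lambda}$-action matches the induced automorphisms $(\theta_{r})_{\ast}$ with the correct conjugation direction, and checking that the Fourier-transform version of $(RD)$ in Proposition~\ref{prop:461c0747dedf7cfc} agrees with the definition of $(RD)$ in \S\ref{sec:00df8c84fb2742bf} for the cocommutative $\mathbb{H} = \widehat{\Gamma}$. Once the finite automorphism group $\Theta$ is correctly identified, both the $(RD)$ and the polynomial-growth halves reduce to averaging length functions, which is routine.
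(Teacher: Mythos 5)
Your proposal is correct and is essentially the paper's own proof: the paper disposes of the corollary in one line by applying Proposition~\ref{prop:461c0747dedf7cfc} with \( \mathbb{H} = \widehat{\Gamma} \) and \( \Theta = \set*{\adj_{r} \given r \in \Lambda} \), exactly the specialization and dictionary you set up. Your separate averaging argument for the polynomial-growth case is a welcome extra bit of care, since Proposition~\ref{prop:461c0747dedf7cfc} as stated only covers the Fourier-transform (\( \RD \)-type) estimate and the paper's one-line proof leaves that parallel argument implicit.
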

\begin{proof}
  This follows from Proposition~\ref{prop:461c0747dedf7cfc} by posing
  \( \Theta = \set*{\adj_{r} \in \aut(\Gamma) \given r \in \Lambda} \) and
  \( \mathbb{H} = \widehat{\Gamma} \).
\end{proof}

\begin{proof}[Proof of Theorem~\ref{theo:54211248e165a397}]
  We begin by observing more closely the action
  \( \Gamma \curvearrowright \irr(G) \). It is clear that this action is
  actually given by \( \aut(G) \) acting on \( G \), and the group morphism
  \( \tau : \Gamma \to \aut(G) \) with respect to which we form the semidirect
  product (see the beginning of \S~\ref{sec:e9d9394edcf4dfb6}). More precisely,
  there is a natural action \( \aut(G) \curvearrowright \irr(G) \) by letting
  \( (\theta, [u]) \mapsto [\theta_{\ast}(u)] \), and the action
  \( \Gamma \curvearrowright \irr(G) \) is given by
  \( (\gamma, x) \mapsto \tau(\gamma) \cdot x \). By definition, one has
  \begin{equation}
    \label{eq:70a011cff261a6d7}
    \inn(G) \subseteq \bigcap_{x \in \irr(G)} {[\aut(G)]}_{x},
  \end{equation}
  where
  \begin{displaymath}
    {[\aut(G)]}_{x} := \set*{\theta \in \aut(G) \given \theta \cdot x = x}.
  \end{displaymath}
  Thus passing to the quotient, it is in fact \( \out(G) = \aut(G)/\inn(G) \)
  that acts on \( \irr(G) \) (inner automorphisms fixes equivalence classes of
  representations by definition). Thus to talk about the \( \Gamma \) invariance
  of a given length function \( l \) on \( \widehat{G} \), it suffices to
  consider the invariance of \( l \) under the image of the composition of group
  morphisms \( \tau \colon \Gamma \to \aut(G) \) and the canonical projection
  \( \aut(G) \to \out(G) \).

  With the above considerations in mind, Theorem~\ref{theo:54211248e165a397} now
  follows from Theorem~\ref{theo:69214233f77b0115},
  Proposition~\ref{prop:461c0747dedf7cfc} (posing \( \mathbb{H} = G \) and
  \( \Theta = \image(\widetilde{\tau}) \)) and
  Corollary~\ref{coro:f15d5d55ec591c2a}.
\end{proof}

\section{Examples of bicrossed products with rapid decay but not polynomial
  growth--part I}
\label{sec:c9560fd86a3f44db}

In this section, we construct some examples of bicrossed products whose has
property \RD but does not have polynomial growth, using the framework developed
in \S~\ref{sec:ef1c8a06826abe0a}. We shall frequently use Jolissaint's theorem
on rapid decay of amalgamated product of groups, which we record here for
convenience of the reader.

\begin{theo}[Jolissaint]
  \label{theo:b5941076363c875f}
  Suppose \( \Gamma_{1} \), \( \Gamma_{2} \) are two discrete groups with
  property \( (RD) \), \( A \) is a finite group,
  \( j_{i} : A \hookrightarrow \Gamma_{i} \) is an injective group morphism for
  \( i = 1, 2 \), then the amalgamated product
  \( \Gamma_{1} \ast_{A} \Gamma_{2} \) with respect to \( j_{1} \), \( j_{2} \)
  also has property \( (RD) \).
\end{theo}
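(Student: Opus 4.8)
The plan is to verify the Haagerup-type inequality that characterizes property \( \RD \) for a discrete group: a group \( \Delta \) with length function \( L \) has \( \RD \) exactly when there is a polynomial \( P \) with \( \norm*{\lambda(f)} \leq P(n) \norm*{f}_{\ell^{2}(\Delta)} \) for every finitely supported \( f \) concentrated on the ball \( \set*{g \given L(g) \leq n} \), where \( \lambda \) denotes the left regular representation on \( \ell^{2}(\Delta) \) and \( \norm*{\lambda(f)} \) is the reduced \( C^{\ast} \)-norm. Fix length functions \( L_{1}, L_{2} \) on \( \Gamma_{1}, \Gamma_{2} \) witnessing their \( \RD \). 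To build a length function on \( \Gamma = \Gamma_{1} \ast_{A} \Gamma_{2} \), I would use the normal form theorem: fixing transversals \( T_{i} \) of \( A \) in \( \Gamma_{i} \) with \( e \in T_{i} \), every \( g \in \Gamma \) is uniquely \( g = t_{1} \cdots t_{m} a \) with \( a \in A \) and \( t_{j} \in T_{i_{j}} \setminus \set*{e} \) lying in alternating factors, and I set \( L(g) := \sum_{j=1}^{m} L_{i_{j}}(t_{j}) \). Here the finiteness of \( A \) is decisive: the subadditivity defect, as well as the effect of re-choosing transversals, is controlled by the single constant \( D := \max_{i,\, a \in A} L_{i}(a) \), so that \( L \) is comparable to a genuine length function---which is all that \( \RD \) depends on.

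The heart of the argument is the Haagerup inequality for \( \Gamma \), which I would prove by adapting Haagerup's original estimate for free groups to the normal form above. Decompose a function \( f \) supported on \( \set*{g \given L(g) \leq n} \) according to syllable length, and, when estimating a convolution product \( f \ast g \), organise the sum by the amount of cancellation occurring where the two words meet: in a product \( (t_{1} \cdots t_{m} a)(s_{1} \cdots s_{l} b) \) the only nontrivial interaction is the collision of the tail \( t_{m} a \) with the head \( s_{1} \). If \( s_{1} \) lies in the factor opposite to \( t_{m} \) the words simply concatenate; if they lie in the same factor, \( t_{m} a s_{1} \) must be re-reduced inside that \( \Gamma_{i} \), which is precisely where the factor-wise \( \RD \) inequality for \( (\Gamma_{i}, L_{i}) \) enters to bound the corresponding operator norm by a polynomial in the relevant lengths. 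Fixing the length \( t \) of the middle cancellation, the contributions coming from distinct outer reduced words are mutually orthogonal in \( \ell^{2}(\Gamma) \) because their normal forms differ, so a single application of Cauchy--Schwarz across the at most \( \min(m,l) + 1 \) possible values of \( t \) keeps the total under control.

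The main obstacle, and the decisive second use of the finiteness of \( A \), is the cascade of cancellations: a single reduction can send \( t_{m} a s_{1} \) into \( A \), so that \( t_{m-1} \) and \( s_{2} \) collide in turn, and so on. I would handle this by observing that each step of the cascade either terminates in an element of \( T_{i} \setminus \set*{e} \)---a genuine surviving syllable, on which the factor estimate applies with only polynomial loss---or lands in the finite set \( A \), contributing merely the constant \( D \) while reducing the syllable count by two. Because \( A \) is finite, the number of \( A \)-valued outcomes available at each stage is bounded independently of \( n \), and the number of stages is at most \( n \); summing the per-stage bounds and combining with the Cauchy--Schwarz factor over middle lengths assembles into a single polynomial estimate \( \norm*{\lambda(f)} \leq Q(n) \norm*{f}_{2} \). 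This establishes \( \RD \) for \( \Gamma_{1} \ast_{A} \Gamma_{2} \).
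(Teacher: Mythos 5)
First, a point of comparison: the paper does not prove this statement at all --- its ``proof'' is the single line citing it as part of \cite[Theorem 2.2.2]{MR943303}. So your proposal is not competing with an argument in the paper; it is an attempt to reprove Jolissaint's theorem, and in outline it follows the same route as Jolissaint's original proof (an adaptation of Haagerup's free-group inequality to normal forms in the amalgam, with the factor-wise \( \RD \) inequality applied to the merged syllable). The architecture is the right one, but the two places where you claim the finiteness of \( A \) does the decisive work both contain genuine gaps.

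First, the assertion that the transversal normal-form length \( L(g) = \sum_{j} L_{i_{j}}(t_{j}) \) is subadditive up to the single constant \( D \) is false. Even for a product \( (t_{1}\cdots t_{m}a)(s_{1}\cdots s_{l}b) \) with \emph{no} cancellation, rewriting the result in the fixed transversals forces the amalgam element to travel through the entire second word: \( as_{1} = s_{1}'a_{1}' \), then \( a_{1}'s_{2} = s_{2}'a_{2}' \), and so on, with each step giving only \( L(s_{j}') \leq L(s_{j}) + 2D \); the defect is of order \( D \cdot l \), not \( D \). This is repairable --- either define the length as the infimum of \( \sum_{j}\ell_{i_{j}}(y_{j}) \) over \emph{all} alternating factorizations \( g = y_{1}\cdots y_{m} \) with \( y_{j} \in \Gamma_{i_{j}}\setminus A \) (no transversals), which really is subadditive up to \( D \), or normalize the \( \ell_{i} \) to be \( \geq 1 \) off the identity and check that \( L \) is then linearly comparable to such a genuine length function --- but some repair must be made. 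Second, and more seriously, the heart of the matter --- the \( \ell^{2} \)-bound on each fixed-cancellation piece of a convolution \( f \ast f' \) --- is never established. Orthogonality of the pieces and Cauchy--Schwarz over the \( \leq \min(m,l)+1 \) values of \( t \) only reduce the problem to bounding each piece; and your bookkeeping for the cascade (boundedly many \( A \)-valued outcomes per stage, at most \( n \) stages, ``summing the per-stage bounds'') is not a valid estimate: if the stage contributions multiply one gets \( \abs*{A}^{n} \), which destroys polynomiality, and otherwise there is nothing meaningful to sum. The observation you are missing is that the cascade is \emph{deterministic} in the pair \( (g,h) \): once the product \( x = gh \), the cancelled middle \( z \) of \( g \), and a \emph{single} element of \( A \) are specified, the pair \( (g,h) \) is recovered. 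Hence, for each fixed \( t \), one Haagerup-type Cauchy--Schwarz over \( z \) bounds the corresponding piece by \( \norm*{f}_{2}\norm*{f'}_{2} \) times a constant of order \( \abs*{A} \) and a polynomial factor coming from the factor-wise \( \RD \) inequality at the merged syllable. Without this determinism, the amalgamated case is not actually reduced to the free-group scheme, and the proof as written does not close.
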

\begin{proof}
  This is part of \cite{MR943303}*{Theorem 2.2.2}.
\end{proof}

We will refer Theorem~\ref{theo:b5941076363c875f} as Jolissaint's theorem
hereafter.

\begin{exam}
  \label{exam:5b12fb782ad37377}
  Take
  \( \Gamma = \psl_{2}(Z) \simeq (\mathbb{Z} / 2\mathbb{Z}) \ast (\mathbb{Z} /
  3\mathbb{Z}) \), with the isomorphism determined by identifying
  \( \mathbb{Z} / 2\mathbb{Z} \) with the cyclic group generated by
  \( s \in \Gamma \), and \( \mathbb{Z} / 3 \mathbb{Z} \) with the cyclic group
  generated by \( t \in \Gamma \), where
  \begin{displaymath}
    s =
    \begin{pmatrix}
      0 & 1 \\
      -1 & 0
    \end{pmatrix}
    \quad\text{ and }\quad
    t =
    \begin{pmatrix}
      0 & -1 \\
      1 & 1
    \end{pmatrix}.
  \end{displaymath}
  (see e.g.~\cite{MR2391387}*{Example E.10, page 476} for a discussion of this
  amalgamated product decomposition of \( \speciallinear_{2}(\mathbb{Z}) \)).
  Let \( G \) be any compact connected real Lie group that admits an element
  \( x \in G \) of order \( 2 \), and an element \( y \) of order \( 3 \), such
  that \( \set*{x,y} \not\subseteq Z(G) \) (e.g.\
  \( G = \operatorname{SO}(3, \mathbb{R}) \), \( x \) is any rotation by
  \( \pi \), \( y \) any rotation by \( 2 \pi / 3 \)), where \( Z(G) \) is the
  center of \( G \). Now the mapping \( s \mapsto \adj_{x} \),
  \( t \mapsto \adj_{y} \) determines a unique group morphism
  \begin{displaymath}
    \tau : \Gamma \to \inn(G) \subseteq \aut(G)
  \end{displaymath}
  so \( \widetilde{\tau} : \Gamma \to \out(G) \) is trivial (hence of finite
  image).  Put \( \Lambda < \Gamma \) to be \( \langle s \rangle \) or
  \( \langle t \rangle \) (the subgroup generated by \( s \) or \( t \)
  respectively). Since \( \Lambda \not\subseteq Z(\Gamma) \), it follows from
  the choice of \( x \) and \( y \) that the resulted bicrossed product
  \( \mathbb{G}:= \Gamma {}_{\alpha^{\Lambda}}\bowtie_{\beta^{\Lambda}} (G
  \rtimes_{\tau} \Lambda) \) is nontrivial
  (Proposition~\ref{prop:71acce0cf99cd889}).

  By Jolissaint's theorem, \( \psl_{2}(Z) \) has \( (RD) \), but
  \( \psl_{2}(\mathbb{Z}) \) does not have polynomial growth since it is not
  virtually nilpotent (by Gromov's famous theorem on polynomial growth,
  see~\cite{MR623534}), and \cite{MR2329000} showed that \( \widehat{G} \) has
  polynomial growth, thus Theorem~\ref{theo:54211248e165a397} applies and we see
  that \( \widehat{\mathbb{G}} \) has \( (RD) \) but not polynomial growth.
\end{exam}

\begin{exam}
  Let \( G \) be any compact group with \( \widehat{G} \) having polynomial
  growth (e.g.\ all connected compact real Lie group), and \( \Lambda \) a
  finite subgroup of \( \aut(G) \). Take \( \Gamma \) to be a nontrivial
  semidirect product of the free group \( \mathbb{F}_{2} \) on two generators
  (here \( \mathbb{F}_{2} \) can be replaced by any discrete group with
  \( (RD) \) but without polynomial growth) with \( \Lambda \) (in particular,
  \( \Lambda \) is nontrivial). Then the obvious action of \( \Lambda \) on
  \( G \) and the canonical projection
  \( \mathbb{F}_{2} \rtimes \Lambda \to \Lambda \) together yield a nontrivial
  left action \( \tau \) of \( \Gamma \) on \( G \) by topological
  automorphisms. The same reasoning as in Example \ref{exam:5b12fb782ad37377}
  shows that \( \Gamma \bowtie (G \rtimes \Lambda) \) is also a bicrossed
  product whose dual has \( (RD) \) but not polynomial growth.
\end{exam}

Many more examples can be constructed in the same spirit as in the above
examples, showing that Theorem~\ref{theo:54211248e165a397} is an applicable
procedure to produce bicrossed products whose dual has \( (RD) \) but not
polynomial growth.

\section{Examples of bicrossed products with rapid decay but not polynomial
  growth--part II}
\label{sec:bd435170e9c7be48}

Despite of the fact that Theorem~\ref{theo:54211248e165a397} yields many
interesting concrete examples of bicrossed products with property \( (RD) \) as
shown in \S~\ref{sec:c9560fd86a3f44db}, it is worth pointing out that the
restriction the finiteness of the image \( \image(\widetilde{\tau}) \) is too
strong to include many other interesting examples, which we will now show in
this section. To make the contrast even more dramatic, we show how to construct
examples of nontrivial bicrossed product of the form
\( \Gamma \bowtie (G \rtimes \Lambda) \) whose dual has \( (RD) \) but
\emph{not} polynomial growth, while \( \image(\widetilde{\tau}) \) as in
Theorem~\ref{theo:54211248e165a397} is infinite (hence
Theorem~\ref{theo:54211248e165a397} no longer applies).

We begin with a simple result in finite group theory.

\begin{lemm}
  \label{lemm:a070dd629af6e21d}
  If \( A \) is a finite abelian group, then there exists infinitely many finite
  abelian groups \( B \), such that \( A \) is isomorphic to a subgroup of
  \( \aut(B) \).
\end{lemm}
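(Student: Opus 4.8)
The plan is to reduce to the cyclic case via the structure theorem and then realize each cyclic factor inside the unit group of a prime field. By the fundamental theorem of finite abelian groups, write \( A \cong \prod_{i=1}^{k} \mathbb{Z}/n_{i}\mathbb{Z} \) (the case of trivial \( A \) being immediate, since then any infinite family of abelian groups works). For each \( i \), the strategy is to find a prime \( p_{i} \) with \( n_{i} \mid p_{i} - 1 \), so that the cyclic group \( \aut(\mathbb{Z}/p_{i}\mathbb{Z}) \cong (\mathbb{Z}/p_{i}\mathbb{Z})^{\times} \) of order \( p_{i}-1 \) contains its unique subgroup of order \( n_{i} \); this yields an embedding \( \mathbb{Z}/n_{i}\mathbb{Z} \hookrightarrow \aut(\mathbb{Z}/p_{i}\mathbb{Z}) \).

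To obtain such primes I would invoke Dirichlet's theorem on primes in arithmetic progressions: the progression \( \set*{1 + n_{i} t \given t \geq 1} \) contains infinitely many primes, so one may in fact choose \( p_{1}, \dots, p_{k} \) to be pairwise distinct. Setting \( B := \prod_{i=1}^{k} \mathbb{Z}/p_{i}\mathbb{Z} \), the hypothesis that the \( p_{i} \) are distinct primes makes each factor a characteristic (Hall) subgroup, whence \( \aut(B) \cong \prod_{i=1}^{k} \aut(\mathbb{Z}/p_{i}\mathbb{Z}) \). Combining the factorwise embeddings then gives \( A \cong \prod_{i=1}^{k} \mathbb{Z}/n_{i}\mathbb{Z} \hookrightarrow \prod_{i=1}^{k} \aut(\mathbb{Z}/p_{i}\mathbb{Z}) \cong \aut(B) \), as desired.

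Finally, to produce \emph{infinitely many} non-isomorphic such \( B \), I would again use that each progression \( 1 + n_{i}\mathbb{Z} \) contains infinitely many primes: one repeats the construction choosing the \( p_{i} \) arbitrarily large, so that \( \abs*{B} = \prod_{i} p_{i} \) is unbounded, and distinct orders force non-isomorphic groups. The only nonelementary input is Dirichlet's theorem, which can be replaced by the classical cyclotomic-polynomial argument producing primes \( \equiv 1 \pmod{n_{i}} \) if one prefers to remain elementary. The remaining structural point to check is the splitting \( \aut(B_{1} \times B_{2}) \cong \aut(B_{1}) \times \aut(B_{2}) \) for \( B_{1}, B_{2} \) of coprime order; this is the main ingredient beyond the arithmetic, but it is entirely standard and I expect it to present no real obstacle.
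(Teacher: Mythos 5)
Your proof is correct, but it follows a genuinely different route from the paper's. The paper reduces (via the structure theorem, as you do) to the case of a cyclic group \( A = \langle a \rangle \) of order \( n \), and then takes \( B = C^{n} \) for an \emph{arbitrary} nontrivial finite abelian group \( C \), embedding \( A \) into \( \aut(B) \) by sending \( a \) to the cyclic shift \( (c_{1}, \ldots, c_{n}) \mapsto (c_{2}, \ldots, c_{n}, c_{1}) \); infinitude of the family is then immediate by varying \( C \). That argument is completely elementary --- no arithmetic input at all --- whereas yours relies on Dirichlet's theorem (or the cyclotomic-polynomial argument) to produce primes \( p_{i} \equiv 1 \pmod{n_{i}} \), plus the splitting \( \aut(B_{1} \times B_{2}) \cong \aut(B_{1}) \times \aut(B_{2}) \) for coprime orders. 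In exchange, your construction yields a stronger conclusion: your \( B = \prod_{i} \mathbb{Z}/p_{i}\mathbb{Z} \) is a \emph{cyclic} group of squarefree order, so you in fact show that any finite abelian \( A \) embeds into the automorphism group of infinitely many cyclic groups, which the paper's homocyclic \( C^{n} \) does not give. One small remark: the coprimality bookkeeping in your argument is not strictly needed, since the coordinatewise action always gives an embedding \( \prod_{i} \aut(B_{i}) \hookrightarrow \aut\bigl(\prod_{i} B_{i}\bigr) \) whether or not the orders are coprime (this is also what silently justifies the paper's ``without loss of generality'' reduction to the cyclic case); you only need the full isomorphism because you want \( B \) cyclic, not merely an embedding.
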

\begin{proof}
  Since \( A \) is a direct sum of finite cyclic groups, without loss of
  generality, we may assume \( A \) is cyclic of order \( n \), with \( a \) as
  a generator. Set \( B \) to be the \( n \)-fold direct sum of any nontrivial
  finite abelian group \( C \), and define \( \sigma(a) \in \aut(B) \) to be the
  permutation
  \begin{displaymath}
    (c_{1}, \ldots, c_{n}) \mapsto (c_{2}, \ldots, c_{n}, c_{1}).
  \end{displaymath}
  Then it is clear that
  \begin{align*}
    \sigma : A &\to \aut(B) \\
    a^{m} &\mapsto {[\sigma(a)]}^{m}
  \end{align*}
  is a well-defined injective group morphism.
\end{proof}

As we will see later, Theorem~\ref{theo:54211248e165a397} no longer applies for
the examples constructed in this section due to the violation of the hypothesis
of the finiteness of \( \image(\widetilde{\tau}) \).  We will thus have to
resort to Theorem~\ref{theo:69214233f77b0115} to prove the rapid decay of the
dual of the bicrossed product \( \Gamma \bowtie (G \rtimes \Lambda) \). Here,
the \( \beta^{\Lambda} \)-invariance of the length function on \( \Gamma \)
poses no problem thanks to Corollary~\ref{coro:f15d5d55ec591c2a}. But the
\( \Gamma \)-invariance of the length function on \( \widehat{G} \) requires a
little more work.

\begin{lemm}
  \label{lemm:146b6e7b1d1a615a}
  Suppose \( \Xi_{1}, \Xi_{2}, \ldots \) is a sequence of finite discrete (hence
  compact) groups. The product group \( \prod_{i=1}^{\infty} \aut(\Xi_{i}) \)
  naturally acts pointwise on the direct sum
  \( \oplus_{i=1}^{\infty} \Xi_{i} \), hence we have a canonical inclusion
  \( \prod_{i=1}^{\infty} \aut(\Xi_{i}) \subseteq
  \aut(\oplus_{i=1}^{\infty}\Xi_{i}) \). With these settings, there exists a
  \( \prod_{i=1}^{\infty}\aut(\Xi_{i}) \)-invariant length function \( l \) on
  the discrete group \( \oplus_{i=1}^{\infty}\Xi_{i} \), such that the pair
  \( \pair*{\oplus_{i=1}^{\infty} \Xi_{i}}{l} \) has polynomial growth.
\end{lemm}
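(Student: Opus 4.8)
The plan is to use that \( \oplus_{i=1}^{\infty}\Xi_{i} \), regarded as a discrete quantum group, is cocommutative: the irreducible representations of its compact dual are indexed by the group elements themselves, each of dimension \( 1 \), with tensor product given by the group law. Hence a length function on this discrete quantum group is nothing but an ordinary length function \( l \colon \oplus_{i=1}^{\infty}\Xi_{i} \to \mathbb{R}_{\geq 0} \) (symmetric, subadditive, vanishing at the unit), and because every \( \dim x = 1 \), polynomial growth of \( \pair*{\oplus_{i=1}^{\infty}\Xi_{i}}{l} \) amounts exactly to the shells \( \set*{\gamma \given k \leq l(\gamma) < k+1} \) being finite of cardinality polynomially bounded in \( k \).

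The guiding observation is that each \( \theta = (\theta_{i})_{i} \in \prod_{i=1}^{\infty}\aut(\Xi_{i}) \) acts coordinatewise and fixes all the units \( e_{\Xi_{i}} \), so it preserves the support \( \supp(\gamma) := \set*{i \given \gamma_{i} \neq e_{\Xi_{i}}} \) of a finitely supported \( \gamma = (\gamma_{i})_{i} \). Thus any \( l \) that is a function of \( \supp(\gamma) \) alone is automatically \( \prod_{i=1}^{\infty}\aut(\Xi_{i}) \)-invariant, and automatically symmetric since \( \supp(\gamma^{-1}) = \supp(\gamma) \). This turns the problem into choosing a good weight on finite subsets of \( \mathbb{N} \), and the naive choice \( l(\gamma) = \abs*{\supp(\gamma)} \) fails only because its shells become infinite once infinitely many \( \Xi_{i} \) are nontrivial.

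Concretely I would discard the (harmless) trivial factors and, the finite case being trivially of polynomial growth, assume infinitely many \( \Xi_{i} \) are nontrivial. Then I set the rapidly growing weights \( w_{i} := \prod_{j=1}^{i}\abs*{\Xi_{j}} \) and define \( l(\gamma) := \sum_{i \in \supp(\gamma)} w_{i} \). Subadditivity follows from \( \supp(\gamma\delta) \subseteq \supp(\gamma) \cup \supp(\delta) \) together with monotonicity of the weighted sum under inclusion and its subadditivity under unions of index sets; symmetry and \( l(e) = 0 \) are immediate, so \( l \) is a genuine \( \prod_{i=1}^{\infty}\aut(\Xi_{i}) \)-invariant length function.

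The one step that really needs the weights is the growth estimate, which I expect to be the crux. Since all \( w_{i} \geq 0 \), the inequality \( l(\gamma) \leq k \) forces \( w_{i} \leq k \) for every \( i \in \supp(\gamma) \); writing \( m(k) \) for the largest index with \( w_{m(k)} \leq k \) (finite because \( w_{i} \to \infty \)), every \( \gamma \) in the ball \( B_{k} := \set*{\gamma \given l(\gamma) \leq k} \) is supported in \( \set*{1, \ldots, m(k)} \). Therefore \( \abs*{B_{k}} \leq \prod_{i=1}^{m(k)}\abs*{\Xi_{i}} = w_{m(k)} \leq k \), so in particular each shell has at most \( k \) elements and \( \pair*{\oplus_{i=1}^{\infty}\Xi_{i}}{l} \) has (linear, hence polynomial) growth. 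The telescoping weights are exactly what confines a ball of radius \( k \) to boundedly many coordinates whose total number of group elements is itself at most \( k \), which is the feature the naive support-cardinality length lacks.
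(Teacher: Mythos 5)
Your proposal is correct and is essentially the paper's own proof: your weights \( w_{i} = \prod_{j=1}^{i}\abs*{\Xi_{j}} \) are exactly the paper's \( M_{i} \), your length function \( l(\gamma) = \sum_{i \in \supp(\gamma)} w_{i} \) coincides with the paper's \( l\bigl((\xi_{i})\bigr) = \sum_{i} \chi_{i}(\xi_{i}) M_{i} \), and your ball-counting bound \( \abs*{B_{k}} \leq w_{m(k)} \leq k \) is the same telescoping estimate the paper uses via the unique \( k \) with \( M_{k-1} \leq n < M_{k} \). The only difference is cosmetic: you explicitly discard trivial factors and treat the finite case separately (a point the paper leaves implicit), which if anything makes the edge cases cleaner.
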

\begin{proof}
  Let \( N_{i} = \abs*{\Xi_{i}} \) for all \( i \in \mathbb{N}_{>0} \) and pose
  \( M_{k} = \prod_{i=1}^{k} N_{i} \) for all \( k \in \mathbb{N} \) (we make
  the convention that \( M_{0} = 1 \)). Let \( e_{i} \) be the identity of the
  group \( \Xi_{i} \), and denote the characteristic function of
  \( \Xi_{i} \setminus \set*{e_{i}} \) by \( \chi_{i} \). Define
  \begin{align*}
    l : \oplus_{i=1}^{\infty} \Xi_{i} & \to \mathbb{R}_{\geq0} \\
    (\xi_{i}) & \mapsto \sum_{i=1}^{\infty} \chi_{i}(\xi_{i}) M_{i}.
  \end{align*}
  Then it is clear that \( l \) is a
  \( \prod_{i=1}^{\infty}\aut(G_{i}) \)-invariant length function on
  \( \oplus_{i=1}^{\infty} \Xi_{i} \). Moreover, for all
  \( n \in \mathbb{N}_{>0} \), there exists a unique \( k \geq 1 \), such that
  \( M_{k-1} \leq n < M_{k} \). Then, by the definition of \( l \), we have
  \begin{displaymath}
    \set*{\xi = (\xi_{i}) \in \oplus_{i=1}^{\infty} \Xi_{i} \given l(\xi) < n}
    \subseteq \set*{(\xi_{i}) \in \oplus_{i=1}^{\infty} \Xi_{i} \given \forall i \geq k, \, \xi_{i} = e_{i}}.
  \end{displaymath}
  Thus
  \begin{displaymath}
    \abs*{\set*{\xi = (\xi_{i}) \in \oplus_{i=1}^{\infty} \Xi_{i} \given l(\xi) < n}}
    \leq \prod_{i=1}^{k-1} N_{i} = M_{k-1} \leq n.
  \end{displaymath}
  In particular, \( \pair*{\oplus_{i=1}^{\infty} \Xi_{i}}{l} \) has polynomial growth.
\end{proof}

We are now prepared to give the construction of new examples of bicrossed
product of the form \( \Gamma \bowtie (G \rtimes \Lambda) \) that don't fit into
the framework of Theorem~\ref{theo:54211248e165a397}.

\begin{exam}
  Let \( \Lambda \) be any nontrivial finite abelian group. By
  Lemma~\ref{lemm:a070dd629af6e21d}, one can take a sequence of finite abelian
  groups \( {(G_{i})}_{i=1}^{\infty} \), such that \( \Lambda \) is isomorphic
  to a subgroup of \( \aut(G_{i}) \) for each \( i = 1,2,\ldots \) via an
  injective group morphism \( j_{i} : \Lambda \hookrightarrow \aut(G_{i}) \).
  Equip each \( G_{i} \) with the discrete topology, and
  \( G := \prod_{i=1}^{\infty} G_{i} \) the product topology. Then \( G \) is a
  compact abelian group. In particular, the character group \( \chi(G) \) of
  \( G \) is a complete set of representatives of \( \irr(G) \). By Pontryagin's
  duality, we have \( \chi(G) \simeq \oplus_{i=1}^{\infty} \chi(G_{i}) \), and
  it is clear that length functions on \( \widehat{G} \) become exactly length
  functions on the discrete group \( \chi(G) \) of continuous characters of
  \( G \). But as finite abelian groups, each \( G_{i} \) is isomorphic to
  \( \chi(G_{i}) \) (albeit the isomorphism is not natural in the categorical
  sense). Thus Lemma~\ref{lemm:146b6e7b1d1a615a} shows that there exists a
  \( \prod_{i=1}^{\infty} \aut(G_{i}) \)-invariant length function \( l_{G} \)
  on \( \widehat{G} \), such that \( \pair*{G}{l_{G}} \) has polynomial growth,
  where we've used the canonical inclusion
  \( \prod_{i=1}^{\infty} \aut(G_{i}) \subseteq \aut(G) \).

  The construction of \( \Gamma \) takes some more work which we now
  explain. First we take \( \Lambda' \) to be any nontrivial finite group and
  pose \( \Gamma_{1} \) to be the free product \( \Lambda \ast \Lambda' \). It
  follows from Jolissaint's theorem and Gromov's theorem that \( \Gamma_{1} \)
  has \( (RD) \) but not polynomial growth. Define
  \( j : \Lambda \hookrightarrow \prod_{i=1}^{\infty} \aut(G_{i}) \) to be the
  mapping
  \( \lambda \mapsto \bigl(j_{1}(\lambda), j_{2}(\lambda), \cdots\bigr) \).
  Take any infinite discrete subgroup \( \Gamma_{2}' \) of
  \( \oplus_{i=1}^{\infty}\aut(G_{i}) \subseteq \prod_{i=1}^{\infty} \aut(G_{i})
  \) such that \( j(\Lambda) \) is contained in the normalizer of
  \( \Gamma_{2}' \) in \( \prod_{i=1}^{\infty} \aut(G_{i}) \). Obviously
  \( j(\Lambda) \) and \( \Gamma_{2}' \) intersect trivially, thus the subgroup
  of \( \prod_{i=1}^{\infty} \aut(G_{i}) \) generated by \( j(\Lambda) \) and
  \( \Gamma_{2}' \) is the (internal) semidirect product of \( \Gamma_{2}' \)
  with \( j(\Lambda) \), which we denote by \( \Gamma_{2} \). Since
  \( \oplus_{i=1}^{\infty} \aut(G_{i}) \) has polynomial growth by
  Lemma~\ref{lemm:146b6e7b1d1a615a}, it follows that \( \Gamma_{2}' \), hence
  \( \Gamma_{2} \) (note that \( [\Gamma_{2} : \Gamma_{2}'] = \abs*{\Lambda} \)
  is finite) has polynomial growth. In particular, \( \Gamma_{2} \) has
  \( (RD) \), and
  \( j : \Lambda \hookrightarrow \prod_{i=1}^{\infty} \aut(G_{i}) \) restricts
  an injective group morphism, which we still denote by \( j \), from
  \( \Lambda \) into \( \Gamma_{2} \). To facilitate our discussion, we identify
  \( \Lambda \) with its copy in \( \Gamma_{1} = \Lambda \ast \Lambda' \) and in
  \( \Gamma \) via \( j \). This allows us to form the amalgamated product of
  \( \Gamma_{1} \) and \( \Gamma_{2} \) over \( A \), which we denote by
  \( \Gamma \). Jolissaint's theorem applies again and proves that \( \Gamma \)
  has \( (RD) \). Moreover, \( \Gamma \) does not have polynomial growth since
  its subgroup \( \Gamma_{1} \) does not. We also make the obvious
  identification of \( \Lambda \) with \( j(\Lambda) \) in \( \Gamma \). By
  Corollary~\ref{coro:f15d5d55ec591c2a}, there exists a \( \Lambda \)-invariant
  length function \( l_{\Gamma} \) on \( \Gamma \), meaning
  \( l_{\Gamma} = l_{\Gamma} \circ \adj_{r} \) for all \( r \in \Lambda \), such
  that \( \pair*{\Gamma}{l_{\Gamma}} \) has \( (RD) \).

  Finally, let's explain how the action, which is a group morphism
  \( \tau : \Gamma \to \aut(G) \), is defined. The trivial group morphism
  \( \Lambda' \to \aut(G) \), together with
  \( j : \Lambda \to \prod_{i=1}^{\infty} \aut(G_{i}) \subseteq \aut(G) \) and
  the universal property of free products, yields a group morphism
  \( \tau_{1} : \Gamma_{1} \to \aut(G) \).  Let \( \tau_{2} \) be the simple
  inclusion
  \( \Gamma_{2} \hookrightarrow \prod_{i=1}^{\infty}\aut(G_{i}) \subseteq
  \aut(G) \). It is clear that \( \tau_{1} \) and \( \tau_{2} \) agree on
  \( \Lambda \), thus the universal property of
  \( \Gamma_{1} \ast_{\Lambda} \Gamma_{2} \) applies and determines a unique
  group morphism \( \tau : \Gamma \to \aut(G) \). We can finally construct the
  bicrossed product \( \Gamma \bowtie (G \rtimes \Lambda) \), and we conclude by
  Theorem~\ref{theo:69214233f77b0115} that the dual of
  \( \Gamma \bowtie (G \rtimes \Lambda) \) has \( (RD) \) (it does not have
  polynomial growth because of Theorem~\ref{theo:ad139c52be8424d9} and the fact
  that \( \Gamma \) does not have polynomial growth).

  It is clear by our construction that
  \( \image(\tau) = \Gamma_{2} \subseteq \aut(G) = \out(G) \) is infinite, thus
  Theorem~\ref{theo:54211248e165a397} does not apply.
\end{exam}

\section*{Declarations}
\subsection*{Ethical Approval}
Not applicable

\subsection*{Funding}
This work was supported by the French ANR ANCG project No. ANR-19-CE40-0002, and
the Polish National Science Center grant (NCN) 2020/39/I/ST1/01566.

\subsection*{Availability of data and materials}
Not applicable

\begin{bibdiv}
\begin{biblist}

\bib{MR3448333}{article}{
      author={Bhowmick, Jyotishman},
      author={Voigt, Christian},
      author={Zacharias, Joachim},
       title={Compact quantum metric spaces from quantum groups of rapid
  decay},
        date={2015},
        ISSN={1661-6952},
     journal={J. Noncommut. Geom.},
      volume={9},
      number={4},
       pages={1175\ndash 1200},
         url={https://doi-org.scd1.univ-fcomte.fr/10.4171/JNCG/220},
      review={\MR{3448333}},
}

\bib{MR2391387}{book}{
      author={Brown, Nathanial~P.},
      author={Ozawa, Narutaka},
       title={{$C^*$}-algebras and finite-dimensional approximations},
      series={Graduate Studies in Mathematics},
   publisher={American Mathematical Society, Providence, RI},
        date={2008},
      volume={88},
        ISBN={978-0-8218-4381-9; 0-8218-4381-8},
         url={https://doi.org/10.1090/gsm/088},
      review={\MR{2391387}},
}

\bib{MR3299063}{article}{
      author={Cheng, Chuangxun},
       title={A character theory for projective representations of finite
  groups},
        date={2015},
        ISSN={0024-3795},
     journal={Linear Algebra Appl.},
      volume={469},
       pages={230\ndash 242},
         url={https://doi.org/10.1016/j.laa.2014.11.027},
      review={\MR{3299063}},
}

\bib{MR3743231}{article}{
      author={Fima, Pierre},
      author={Mukherjee, Kunal},
      author={Patri, Issan},
       title={On compact bicrossed products},
        date={2017},
        ISSN={1661-6952},
     journal={J. Noncommut. Geom.},
      volume={11},
      number={4},
       pages={1521\ndash 1591},
         url={https://doi.org/10.4171/JNCG/11-4-10},
      review={\MR{3743231}},
}

\bib{MR4345210}{article}{
      author={Fima, Pierre},
      author={Wang, Hua},
       title={Rapid decay and polynomial growth for bicrossed products},
        date={2021},
        ISSN={1661-6952},
     journal={J. Noncommut. Geom.},
      volume={15},
      number={3},
       pages={1105\ndash 1128},
         url={https://doi-org.scd1.univ-fcomte.fr/10.4171/jncg/433},
      review={\MR{4345210}},
}

\bib{MR623534}{article}{
      author={Gromov, Mikhael},
       title={Groups of polynomial growth and expanding maps},
        date={1981},
        ISSN={0073-8301},
     journal={Inst. Hautes \'{E}tudes Sci. Publ. Math.},
      number={53},
       pages={53\ndash 73},
         url={http://www.numdam.org/item?id=PMIHES_1981__53__53_0},
      review={\MR{623534}},
}

\bib{MR943303}{article}{
      author={Jolissaint, Paul},
       title={Rapidly decreasing functions in reduced {$C^*$}-algebras of
  groups},
        date={1990},
        ISSN={0002-9947},
     journal={Trans. Amer. Math. Soc.},
      volume={317},
      number={1},
       pages={167\ndash 196},
         url={https://doi-org.scd1.univ-fcomte.fr/10.2307/2001458},
      review={\MR{943303}},
}

\bib{MR0229061}{article}{
      author={Kac, G.~I.},
       title={Group extensions which are ring groups},
        date={1968},
     journal={Mat. Sb. (N.S.)},
      volume={76 (118)},
       pages={473\ndash 496},
      review={\MR{0229061}},
}

\bib{MR0031489}{article}{
      author={Mackey, George~W.},
       title={Imprimitivity for representations of locally compact groups.
  {I}},
        date={1949},
        ISSN={0027-8424},
     journal={Proc. Nat. Acad. Sci. U. S. A.},
      volume={35},
       pages={537\ndash 545},
         url={https://doi.org/10.1073/pnas.35.9.537},
      review={\MR{0031489}},
}

\bib{MR0098328}{article}{
      author={Mackey, George~W.},
       title={Unitary representations of group extensions. {I}},
        date={1958},
        ISSN={0001-5962},
     journal={Acta Math.},
      volume={99},
       pages={265\ndash 311},
         url={https://doi.org/10.1007/BF02392428},
      review={\MR{0098328}},
}

\bib{MR3204665}{book}{
      author={Neshveyev, Sergey},
      author={Tuset, Lars},
       title={Compact quantum groups and their representation categories},
      series={Cours Sp\'{e}cialis\'{e}s [Specialized Courses]},
   publisher={Soci\'{e}t\'{e} Math\'{e}matique de France, Paris},
        date={2013},
      volume={20},
        ISBN={978-2-85629-777-3},
      review={\MR{3204665}},
}

\bib{MR1970242}{article}{
      author={Vaes, Stefaan},
      author={Vainerman, Leonid},
       title={Extensions of locally compact quantum groups and the bicrossed
  product construction},
        date={2003},
        ISSN={0001-8708},
     journal={Adv. Math.},
      volume={175},
      number={1},
       pages={1\ndash 101},
  url={https://doi-org.scd1.univ-fcomte.fr/10.1016/S0001-8708(02)00040-3},
      review={\MR{1970242}},
}

\bib{MR2329000}{article}{
      author={Vergnioux, Roland},
       title={The property of rapid decay for discrete quantum groups},
        date={2007},
        ISSN={0379-4024},
     journal={J. Operator Theory},
      volume={57},
      number={2},
       pages={303\ndash 324},
      review={\MR{2329000}},
}

\bib{wang2019representations}{article}{
      author={Wang, Hua},
       title={On representations of semidirect products of a compact quantum
  group with a finite group},
        date={2019},
     journal={arXiv preprint arXiv:1909.02359},
}

\bib{phdthesis}{thesis}{
      author={Wang, Hua},
       title={Rapid decay of bicrossed products and representation theory of
  some semidirect products},
        type={Ph.D. Thesis},
        date={2020},
}

\bib{MR1096123}{article}{
      author={Woronowicz, S.~L.},
       title={Unbounded elements affiliated with {$C^*$}-algebras and
  noncompact quantum groups},
        date={1991},
        ISSN={0010-3616},
     journal={Comm. Math. Phys.},
      volume={136},
      number={2},
       pages={399\ndash 432},
  url={http://projecteuclid.org.scd1.univ-fcomte.fr/euclid.cmp/1104202358},
      review={\MR{1096123}},
}

\bib{MR1616348}{incollection}{
      author={Woronowicz, S.~L.},
       title={Compact quantum groups},
        date={1998},
   booktitle={Sym\'{e}tries quantiques ({L}es {H}ouches, 1995)},
   publisher={North-Holland, Amsterdam},
       pages={845\ndash 884},
      review={\MR{1616348}},
}

\end{biblist}
\end{bibdiv}

\end{document}